\algnewcommand{\IIf}[1]{\State\algorithmicif\ #1\ \algorithmicthen}
\algnewcommand{\EndIIf}{\unskip\ \algorithmicend\ \algorithmicif}
\algrenewcommand\algorithmicrequire{\textbf{Input:}}
\algrenewcommand\algorithmicensure{\textbf{Output:}}
\tikzstyle{line} = [draw, -latex']
\definecolor{lightmauve}{rgb}{0.86, 0.82, 1.0}
\newtheorem{theorem}{Theorem}[section]
\newtheorem{corollary}[theorem]{Corollary}
\newtheorem{definition}[theorem]{Definition}
\newtheorem{lemma}[theorem]{Lemma}
\DeclareSymbolFont{symbolsC}{U}{pxsyc}{m}{n}
\DeclareMathSymbol{\coloneqq}{\mathrel}{symbolsC}{"42}
\newcommand{\ZE}{ \bar{Z} }
\newcommand{\LLE}[1]{ \bar{\LL}^{#1} }
\newcommand{\muE}[1]{ \bar{\mu}^{#1} }
\newcommand{\phiE}[1]{ \bar{\phi}^{#1} }
\newcommand{\varphiE}[1]{\bar{\varphi}^{#1}}
\newcommand{\LE}[1]{\bar{L}^{#1}}
\newcommand{\outedges}[1]{
	\delta^{-}(#1)
}
\newcommand{\inedges}[1]{
	\delta^{+}(#1)
}
\newcommand{\y}{\bar{y}}
\renewcommand{\S}{S}
\newcommand{\F}{\{0, 1\}}
\newcommand{\FR}{[0, 1]}
\newcommand{\N}{\mathbbm{N}}
\newcommand{\Z}{\mathbbm{Z}}
\newcommand{\R}{\mathbbm{R}}
\providecommand{\myfloor}[1]{\left \lfloor #1 \right \rfloor }
\newcommand{\SSet}[2]{
		\left\{\, #1 \ \middle\vert \ #2 \, \right\}
	}
\newcommand{\func}[3]{
	#1 \colon #2  \to #3
}
\DeclareMathOperator{\cav}{cav}
\DeclareMathOperator{\vex}{vex}
\DeclareMathOperator{\LB}{LB}
\DeclareMathOperator{\UB}{UB}
\DeclareMathOperator{\conv}{conv}
\DeclareMathOperator{\cone}{cone}
\newcommand{\card}[1]{\lvert #1 \rvert}
\newcommand{\abs}[1]{\lvert #1 \rvert}
\newcommand{\norm}[1]{\lVert #1 \rVert}
\newcommand{\set}[1]{\{#1\}}
\newcommand{\lrset}[1]{\left\{#1\right\}}
\newcommand{\qm}[1]{``#1''}
\newcommand{\ie}{i.e.\ }
\newcommand{\Wlog}{w.l.o.g.\ }
\newcommand{\WLOGc}{W.l.o.g.}
\newcommand{\cf}{cf.\ }
\newcommand{\eg}{e.g.\ }
\newcommand{\mycomment}[1]{}
\newcommand{\st}{\mathrm{s.t.}}
\newlength\mysinglespace
\newlength\objspace
\newlength\conspace
\newlength\cconspace
\newenvironment{constr}[1]{\begin{array}[t]{#1}}{\end{array}} 
\newenvironment{opt*}[3]{\begin{equation*}\begin{array}{rl}#1 & #2 \\[\objspace] \st & \begin{constr}{#3}}{\end{constr}\end{array}\end{equation*}\\[0pt]}
\newenvironment{eqns*}[1]{\begin{equation*}\begin{array}[t]{#1}}{\end{array}\end{equation*}\\[0pt]}
\newcommand{\GG}{\mathcal{G}}
\newcommand{\VV}{\mathcal{V}}
\newcommand{\EE}{\mathcal{E}}
\newcommand{\Gij}{G_{ij}}
\newcommand{\Eij}{E_{ij}}
\newcommand{\FF}{\mathcal{F}}
\newcommand{\LL}{\mathcal{L}}
\newcommand{\RR}{\mathcal{R}}
\newcommand{\mP}{\mathcal{P}}
\newcommand{\QQ}{\mathcal{Q}}
\newcommand{\supplement}{online supplement \cite{supplement}}
\newif\ifproof
\definecolor{lightmauve}{rgb}{0.86, 0.82, 1.0}
\DeclareSymbolFont{symbolsC}{U}{pxsyc}{m}{n}
\DeclareMathSymbol{\coloneqq}{\mathrel}{symbolsC}{"42}
\crefname{equation}{}{}
\renewcommand{\cref}{\Cref}
\title{Set characterizations and convex extensions\\ for geometric convex-hull proofs}
\author[1]{Andreas~Bärmann}
\author[2]{Oskar Schneider\vspace{2\baselineskip}}
\affil[1]{
\url{Andreas.Baermann@math.uni-erlangen.de}

Lehrstuhl für Wirtschaftsmathematik,

Department Mathematik,

Friedrich-Alexander-Universität Erlangen-Nürnberg,

Cauerstraße 11, 91058 Erlangen, Germany
\vspace{\baselineskip}
}
\affil[2]{
\url{Oskar.Schneider@fau.de}

Gruppe Optimization 

Fraunhofer Arbeitsgruppe für Supply-Chain Services SCS,

Fraunhofer Institut für Integrierte Schaltungen IIS,

Nordostpark 93, 90411 Nürnberg, Germany
}
\date{First Draft Online: 22 January 2021}
\begin{document}

\maketitle

\begin{abstract}
In the present work, we consider Zuckerberg's method for \mbox{geometric} convex-hull proofs
introduced in [Geometric proofs for convex hull defining formulations, Operations Research Letters \textbf{44}(5), 625–629 (2016)]. 
It has only been scarcely adopted in the literature so far,
despite the great flexibility in designing algorithmic proofs
for the completeness of polyhedral descriptions that it offers.
We suspect that this is partly due to the rather heavy algebraic framework
its original statement entails.
This is why we present a much more lightweight and accessible approach
to Zuckerberg's proof technique, building on ideas from
[Extended formulations for convex
hulls of some bilinear functions, Discrete Optimization \textbf{36}, 100569 (2020)]. 
We introduce the concept of set characterizations
to replace the set-theoretic expressions needed in the original version
and to facilitate the construction of algorithmic proof schemes.
Along with this, we develop several different strategies to conduct
Zuckerberg-type convex-hull proofs.
Very importantly, we also show that our concept
allows for a significant extension of Zuckerberg's proof technique.
While the original method was only applicable to $0$/$1$-polytopes,
our extended framework allows to treat arbitrary polyhedra
and even general convex sets.
We demonstrate this increase in expressive power
by characterizing the convex hull of Boolean and bilinear functions over polytopal domains.
All results are illustrated with indicative examples
to underline the practical usefulness and wide applicability of our framework.
\\\\
\textbf{Keywords:} Convex-Hull Proofs, Zuckerberg's Method, Proof-by-Picture Method, Set Characterizations, Integer Polytopes
\\\\
\textbf{Mathematics Subject Classification:}
	90C57 - 
	52B05 - 
	90C10 - 
	90C27 - 
	90C25 
\end{abstract}

\section{Introduction}

Studying polyhedral structures lies at the heart of mixed-integer programming.
It is well-known to anyone in the field
that a good understanding of the facial structure
of a given integer linear optimization problem
both informs theory and practical algorithm development
in a very beneficial way.
This include tight problem relaxations, extended formulations,
cutting plane algorithms, only to name a few.
A more or less complete understanding of a polyhedral feasible set
can be claimed if one accomplishes a so-called convex-hull proof,
\ie a proof that a given inequality description is sufficient
to describe all of its facets.
The book \cite{PochetWolsey2006} includes a popular list
of possible approaches to obtain such a proof.
They include total unimodularity, TDI-ness, projection
or a direct proof that all vertices are integral,
among a couple of others.
For all of these approaches, there are numerous examples
where they have been used successfully,
and typically each of these methods
works especially well for particular types of problems
(such as total unimodularity for network-type problems
or TDI-ness for balanced matrices)

A relatively new technique for convex-hull proofs
has been given in \cite{zuckerberg2016geometric} by Zuckerberg,
with precursors in \cite{bienstock2004subset,zuckerberg2004set,LovaszSchrijver1991}.
It is a geometric approach based on subset algebra.
The core of Zuckerberg's method is a novel type of criterion
for showing that any given point within a given polytope (in H-description)
lies within the polytope for which a convex-hull description is to be proved.
It works by constructing an implicit (rather than an explicit), set-theoretic representation
of this point as a convex combination of the vertices of the latter.
The actual proof takes the form of an algorithm
which constructs such a set-theoretic representation.
In a second step, it is even possible to obtain the convex combination in explicit form,
in contrast to most other known proof techniques for convex-hull results.
Zuckerberg himself refers to his method either as \emph{geometric convex-hull proofs}
or as \emph{the proof-by-picture method},
because this algorithm and its result can be visualized in a diagram
incorporating all necessary information.
For the sake of simplicity, we will use the name \emph{Zuckerberg's method}
throughout to refer to this technique as well as our extensions of it.

Although the examples outlined in \cite{zuckerberg2016geometric}
already convey the impression of a very powerful proof technique,
it has only been scarcely adopted in the literature so far.
We assume that this is due to the rather heavy algebraic framework
that has been used to derive and state the method.
Zuckerberg has stated his method in terms of abstract measure spaces
over which set-theoretic expressions have to be derived.
In the recent work \cite{gupte2020extended},
the authors give a significantly simplified version of his approach
by passing over to a concrete measure space:
a real interval equipped with the Lebesgue measure.
Their actual aim in this article are proofs on the facial structure
of the graphs of bilinear functions.
However, they also give a short introduction to his proof technique
and find a way to state it mostly without using set-algebraic terms.
They proceed by showing it to be a very suitable means of proving their convex-hull results.
In \cite{harris2020convex}, the authors continue
the work of \cite{gupte2020extended}
and give further convex-hull results on special graph classes.
The authors of \cite{BMS2020} have adopted their simplified approach of Zuckerberg's method
in order to give convex-hull proofs for special cases of
the Boolean quadric polytope (see \cite{padberg1989boolean})
with multiple-choice constraints.

\paragraph{Contribution}

In the present article, we aim to show the power and flexibility
of Zuckerberg's approach to conduct convex-hull proofs.
To this end, we give an even more concise and accessible derivation
of the technique and relate it to the method in its original form.
Our novel way to introduce the method
is based on so-called \emph{set characterizations},
which provide a structured way of devising the algorithmic parts
of the convex-hull proofs.
It directly relates the set-theoretic representations to be found
to the constraints determining the integer points within the polyhedron
to be analysed.
Most notably, we use this concept
to significantly increase the scope of Zuckerberg's method.
While the original method is only applicable to $0$/$1$-polytopes,
we extend it from binary polytopes to arbitrary, especially integer polyhedra
and even much more general convex sets.

We demonstrate the wide applicability of our set characterization framework
by reproving several known convex-hull results
for both binary and integral polyhedra.
To facilitate the design of Zuckerberg convex-hull proofs,
we connect these examples with the introduction
of three basic proof strategies,
namely greedy placement, feasibility subproblems and transformation.
Altogether, this allows us to give simple constructions to represent a fractional point
in a given polyhedron as a convex combination of its vertices
where this was not straightforward before.
Moreover, we give further extensions of the method
to enable convex-hull proofs for function graphs over polytopes.
On the one hand, these extensions allow to prove convex-hull descriptions
for graphs of Boolean functions over $0$/$1$-polytopes.
On the other hand, they can be applied to bilinear functions over arbitrary polytopes,
generalizing the result from \cite{gupte2020extended} for bilinear functions over unit-boxes.
In summary, we show that Zuckerberg's method is a valuable tool
for conducting convex-hull proofs.
At the same time, our extensions of the framework
even allow to use it in much more general cases.

\paragraph{Structure}
This article is structured as follows.
We start by giving a detailed introduction
to Zuckerberg's proof technique for $0$/$1$-polytopes in Section~2.
We also establish our framework of set characterizations
for geometric convex-hull proofs.
Section~3 features three indicative examples of its application.
Each example highlights a novel algorithmic strategy
to conduct Zuckerberg-type convex-hull proofs.
In Section~4, we generalize Zuckerberg's method to arbitrary convex sets
by passing from one-dimensional set-theoretic representations to two-dimensional ones.
In particular, we will derive new techniques for convex-hull proofs
for the case of integer polyhedra.
Analogously to the binary case,
Section~5 gives examples for the use of our extended technique
in the context of mixed-binary optimization problems.
Among others, we show how to use the scheme
to prove total unimodularity of the constraint matrices
of combinatorial problems.
In Section~6, we derive further extensions of our approach
which allow to give convex-hull proofs for the graphs of Boolean and bilinear functions
over polytopal domains and introduce a generalized framework of set characterizations
for this purpose.
Our conclusions can be found in Section~7.
Finally, in the online supplement \cite{supplement} to this article,
we provide several further examples
for the application of our framework in the context of stable-set problems,
mixed-integer models for piecewise linear functions as well as interval matrices
and give some proofs omitted in Section~6.
\paragraph{Notation}
To facilitate notation, we denote the power set of a set $A$ by $\mP(A)$. Further, we write $[n]$ for the set $\set{1,\ldots,n} $ for any $n\in \N$. Especially $[0] \coloneqq \emptyset$. 
\section{Geometric convex-hull proofs for $0$/$1$-polytopes}
\label{Sec:Geometric_Proofs}

In this section, we revisit Zuckerberg's method for convex-hull proofs
for combinatorial decision or optimization problems
(see \cite{zuckerberg2016geometric,bienstock2004subset}).
We start by briefly summarizing it,
based on the condensed version of the method
that was derived in \cite{gupte2020extended}.
Then we introduce the concept of set characterizations
to significantly simplify the derivation of the set construction algorithms
which form the core of Zuckerberg-type convex-hull proofs.
Furthermore, we give set characterizations for many types of constraints
which typically occur in combinatorial optimization
and give some first indicative examples for their practical use.
Finally, we put our new approach into context with the original framework by Zuckerberg
to highlight how much simpler convex-hull proofs can now be conducted.

Consider a $0$/$1$-polytope $ P \coloneqq \conv(\FF) $
with vertex set $ \FF \subseteq \F^n $
together with a second polytope $ H \subseteq \R^n $
which is given via an inequality description.
If we want to prove $ P = H $, we can proceed
by verifying both $ \FF \subseteq H $ and $ H \subseteq P $.
The first inclusion is typically easy to show;
for the latter we can use Zuckerberg's method,
as outlined in the following.

Define $ U \coloneqq [0, 1) $, let $ \LL $ be the set of all unions
of finitely many half-open subintervals of $U$,
and let $ \mu $ be the Lebesgue measure restricted to $ \LL $,
that is
\begin{alignat*}{1}
	\LL \coloneqq \SSet{
		\bigcup\limits_{i = 1}^k [a_i, b_i)
		}
		{k \in \N \wedge 0 \leq a_1 < b_1 < a_2 < b_2 < \ldots < a_k < b_k \leq 1},\\
	\mu(S) \coloneqq \sum_{i = 1}^k (b_i - a_i)
		\text{ for any } S = [a_1, b_1) \cup \ldots \cup [a_k, b_k) \in \LL.
\end{alignat*}
Consider now the indicator function $ \func{\phi}{U \times \LL}{\F} $,
\begin{equation*}
	\phi(t, S) \coloneqq \begin{cases}
		1 & \text{ if } t \in S,\\
		0 & \text{ otherwise},
	\end{cases}
\end{equation*}
and let 
$ \func{\varphi}{U \times \LL^n}{\F^n},
	\varphi(t, S_1, \ldots, S_n) = v $,
where $ v_i \coloneqq \phi(t, S_i) $ for $ i \in [n] $.
In other words, $ \varphi $ maps the sets which are active at a certain $ t \in U $
onto the corresponding incidence vector in $ \F^n $.

The following result uses the above formalism to give a concise criterion
for~$H$ being a complete polyhedral description of $ \conv(\FF) $.
\begin{theorem}[{\cite[Theorem~4]{gupte2020extended}}, Zuckerberg's convex-hull characterization]
	Let $ \FF \subseteq \F^n $ and $ h \in \FR^n $.
	Then we have $ h \in \conv(\FF) $
	iff there are sets $ S_1, \ldots, S_n \in \LL $
	such that both $ \mu(S_i) = h_i $ for all $ i \in [n] $
	and $ \varphi(t, S_1, \ldots, S_n) \in \FF $
	for all $ t \in U $.
	\label{thm:gupteC}
\end{theorem}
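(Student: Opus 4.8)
The plan is to prove the two implications by moving back and forth between a finite convex combination $h=\sum_{j}\lambda_j v^j$ of vertices $v^j\in\FF$ and a tuple $(S_1,\dots,S_n)\in\LL^n$, using the idea that $U=[0,1)$ can be sliced into finitely many consecutive half-open intervals, one for each vertex that is used. Since $\FF$ is finite, membership $h\in\conv(\FF)$ is equivalent to $h$ being such a finite convex combination, so there is nothing set-theoretic hidden on that side.

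\emph{From a convex combination to sets.} Suppose $h=\sum_{j=1}^{m}\lambda_j v^j$ with pairwise distinct $v^j\in\FF$, $\lambda_j>0$ and $\sum_{j}\lambda_j=1$. Put $c_0:=0$, $c_j:=\sum_{l\le j}\lambda_l$, and $I_j:=[c_{j-1},c_j)$, so that $I_1,\dots,I_m$ partition $U$ and $\mu(I_j)=\lambda_j$. Define, for each $i\in[n]$,
\[
 S_i:=\bigcup_{j\,:\,v^j_i=1} I_j .
\]
After amalgamating abutting intervals (and allowing the empty union when no $v^j_i=1$), each $S_i$ lies in $\LL$, and $\mu(S_i)=\sum_{j:v^j_i=1}\lambda_j=h_i$. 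For $t\in U$ let $j$ be the unique index with $t\in I_j$; then $\phi(t,S_i)=1$ iff $v^j_i=1$, hence $\varphi(t,S_1,\dots,S_n)=v^j\in\FF$. This yields the sets required by the theorem.

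\emph{From sets to a convex combination.} Conversely, assume $S_1,\dots,S_n\in\LL$ with $\mu(S_i)=h_i$ and $\varphi(t,S_1,\dots,S_n)\in\FF$ for all $t\in U$. Collecting the finitely many interval endpoints appearing in $S_1,\dots,S_n$ together with $0$ and $1$ and sorting the distinct values yields a partition of $U$ into finitely many nonempty consecutive half-open intervals $J_1,\dots,J_N$ on each of which every indicator $t\mapsto\phi(t,S_i)$ is constant. Hence $\varphi(\cdot,S_1,\dots,S_n)$ is constant on each $J_k$ with some value $v^k$, and picking any $t\in J_k$ shows $v^k\in\FF$; moreover one has $S_i=\bigcup_{k\,:\,v^k_i=1}J_k$. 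Writing $\lambda_k:=\mu(J_k)\ge 0$ we get $\sum_k\lambda_k=\mu(U)=1$ and, for every $i\in[n]$,
\[
 h_i=\mu(S_i)=\sum_{k\,:\,v^k_i=1}\mu(J_k)=\sum_{k=1}^{N}\lambda_k\,v^k_i ,
\]
so $h=\sum_{k}\lambda_k v^k\in\conv(\FF)$.

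The whole argument is essentially bookkeeping, and I do not expect a genuine obstacle. The one point that deserves care is the construction, in the converse direction, of the common interval refinement $J_1,\dots,J_N$ on which all $n$ indicator functions are simultaneously constant --- this is exactly where the finiteness built into the definition of $\LL$ is used --- together with the dual observation in the forward direction that the sets $S_i$ constructed there can genuinely be brought into the normalized form demanded by $\LL$ (dropping zero-weight vertices, merging adjacent pieces, permitting the empty set). Everything else is a direct computation of Lebesgue measures of disjoint unions of intervals.
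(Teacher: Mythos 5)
Your argument is correct and follows essentially the same scheme the paper uses: the paper cites this result from the literature but proves its generalization (\cref{thm:extended_zucker}) by exactly this two-way construction --- slicing $U$ into intervals of lengths $\lambda_k$ to build the $S_i$, and in the converse direction reading off the finitely many values of $\varphi(\cdot,S_1,\ldots,S_n)$ and taking the measures of their preimages as coefficients. Your explicit common refinement $J_1,\ldots,J_N$ is just a slightly more hands-on way of justifying that finiteness and the measure bookkeeping, so there is no substantive difference.
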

\cref{thm:gupteC} provides a certificate for a point $ h \in H $
to be in $ \conv(\FF) $.
Thus, if we can find sets $ S_1, \ldots, S_n $ as required by~\cref{thm:gupteC}
for each point $ h \in H $, we have shown $ H \subseteq P $ as well.
Using the above framework even allows us to write a point $ h \in H $
as a convex combination of points in $ \FF $,
as the following corollary tells us.
This allows the spanning vertices to be used in heuristics, for example.
To this end, we define
\begin{equation*}
	L_{\xi}(S_1,\dotsc,S_n) \coloneqq \SSet{t\in U}{\varphi(t, S_1, \ldots, S_n) = \xi}.
\end{equation*}
to denote the support of a each vertex $ \xi \in \FF $ in $U$.
\begin{corollary}[Convex combinations]
	\label{Cor:Convex_Combinations}
	Under the same assumptions as in \cref{thm:gupteC},
	let $ \lambda_\xi \coloneqq \mu(L_{\xi}(S_1,\dotsc,S_n)) $
	for each $ \xi \in \FF $.
	Then we have $ h = \sum_{\xi \in \FF} \lambda_\xi \xi $,
	$ \sum_{\xi \in \FF} \lambda_\xi = 1 $
	and $ \lambda_\xi \geq 0 $ for all $ \xi \in \FF $.
\end{corollary}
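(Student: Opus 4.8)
The plan is to show that the sets $L_\xi \coloneqq L_\xi(S_1, \ldots, S_n)$, $\xi \in \FF$, form a finite partition of $U$ into elements of $\LL$, and then to read off all three assertions from the finite additivity of $\mu$.

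First I would check that each $L_\xi$ lies in $\LL$, so that $\lambda_\xi = \mu(L_\xi)$ is well-defined and automatically nonnegative, giving the last claim. By the definitions of $\phi$ and $\varphi$ we have
\[
L_\xi = \bigcap_{i \in [n] : \xi_i = 1} S_i \;\cap\; \bigcap_{i \in [n] : \xi_i = 0} (U \setminus S_i).
\]
Since each $S_i$ is a finite union of half-open subintervals of $U = [0, 1)$, so is its complement in $U$, and the class of such sets is closed under finite intersections; hence $L_\xi \in \LL$ (after merging adjacent intervals, and possibly $L_\xi = \emptyset$). This is the only mildly technical point, and it is a routine closure property of the interval algebra underlying $\LL$.

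Next I would argue that $\set{L_\xi : \xi \in \FF}$ partitions $U$. Disjointness is immediate because $\varphi(t, S_1, \ldots, S_n)$ is a single well-defined vector for each $t$, so $t$ belongs to exactly one $L_\xi$; and $\bigcup_{\xi \in \FF} L_\xi = U$ because the hypothesis inherited from \cref{thm:gupteC} guarantees $\varphi(t, S_1, \ldots, S_n) \in \FF$ for every $t \in U$. As $\FF \subseteq \F^n$ is finite, finite additivity of $\mu$ then yields $\sum_{\xi \in \FF} \lambda_\xi = \mu\big(\bigcup_{\xi \in \FF} L_\xi\big) = \mu(U) = 1$.

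Finally, for the identity $h = \sum_{\xi \in \FF} \lambda_\xi \xi$ I would fix a coordinate $i \in [n]$ and note that, by the definitions of $\phi$ and $\varphi$ again,
\[
S_i = \SSet{t \in U}{\varphi(t, S_1, \ldots, S_n)_i = 1} = \bigcup_{\xi \in \FF : \xi_i = 1} L_\xi,
\]
once more a disjoint finite union. Applying $\mu$ and using $\mu(S_i) = h_i$ gives $h_i = \sum_{\xi \in \FF : \xi_i = 1} \lambda_\xi = \sum_{\xi \in \FF} \lambda_\xi \xi_i$, which is precisely the $i$-th component of $\sum_{\xi \in \FF} \lambda_\xi \xi$. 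Since $i$ was arbitrary, the claim follows. I do not anticipate any genuine obstacle here: the whole argument is bookkeeping with a finite measurable partition of $U$, the only point requiring a little care being the membership $L_\xi \in \LL$ so that $\mu(L_\xi)$ and finite additivity are available.
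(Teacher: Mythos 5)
Your argument is correct and matches the approach the paper relies on: the paper treats \cref{Cor:Convex_Combinations} as one direction of the proof of Theorem~4 in \cite{gupte2020extended} and as a special case of the converse direction of \cref{thm:extended_zucker}, where exactly your decomposition $S_i=\bigcup_{\xi\in\FF:\,\xi_i=1}L_\xi(S_1,\dotsc,S_n)$ together with additivity of $\mu$ yields $h_i=\sum_{\xi}\lambda_\xi\xi_i$. Your explicit verification that the $L_\xi$ form a finite partition of $U$ within $\LL$ (giving $\sum_\xi\lambda_\xi=\mu(U)=1$ and $\lambda_\xi\geq 0$) is the same bookkeeping, just carried out directly in the binary setting.
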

The above corollary was not stated explicitly in \cite{gupte2020extended},
but it is one direction of the proof of Theorem~4 therein.
We already remark here that both \cref{thm:gupteC} and \cref{Cor:Convex_Combinations}
are special cases of the results we will prove in \cref{sec:zuckerberg_extended}
for general convex sets (and integer polyhedra in particular).

In combinatorial optimization, the vertex set $ \FF $ is typically implicitly defined
via an inequality description of the feasible incidence vectors
of the underlying problem.
We will now show that based on such a description,
we can make the expression $ \varphi(t, S_1, \ldots, S_n) \in \FF $
in \cref{thm:gupteC} more concrete.
For this purpose, we translate each constraint defining $ \FF $
into a logic statement of the following form.
\begin{definition}[Set characterization of a constraint]
	\label{def:set-char}
	Let $ \func{f}{\F^n}{\R} $, let $ b \in \R $,
	and let $ S_1, \ldots, S_n \in \LL $.
	The \emph{set characterization} of some constraint $ f(x) \leq b $
	is the following logic statement:
	\begin{equation*}
		f(\phi(t, S_1), \ldots, \phi(t, S_n)) \leq b \text{ holds for all } t \in U.
	\end{equation*}
\end{definition}
Note that this definition allows for arbitrary constraints on the incidence vectors,
not only linear ones.
We now observe that if $ \FF $ is given by such an implicit outer description,
we need to satisfy all set characterizations of the corresponding constraints
to fulfil the requirements of \cref{thm:gupteC} and \cref{Cor:Convex_Combinations}.
\begin{lemma}
	\label{Lem:Set_Characterization}
	Let $ \FF \coloneqq \set{x \in \F^n \mid f_j(x) \leq b_j\, \forall j \in [m]} $
	for some $ m \in \N $.
	Further, let $ P \coloneqq \conv(\FF) $,
	and let~$ H \subseteq \FR^n $ be some polytope.
	We have $ H = P $ iff both $ \FF \subseteq H $ holds
	and for each $ h \in H $ there are sets $ S_1, \ldots, S_n \in \LL $
	with $ \mu(S_i) = h_i $ for all $ i \in [n] $
	which satisfy the set characterization
	for each constraint $ f_j(x) \leq b_j $, $ j \in [m] $.
\end{lemma}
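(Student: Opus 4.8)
The plan is to deduce the statement directly from \cref{thm:gupteC} by making explicit the elementary observation that, for fixed sets $S_1,\dotsc,S_n \in \LL$ and a fixed $t \in U$, the membership $\varphi(t,S_1,\dotsc,S_n) \in \FF$ is nothing but the conjunction over all $j \in [m]$ of the inequalities $f_j(\phi(t,S_1),\dotsc,\phi(t,S_n)) \leq b_j$. Indeed, by construction $\varphi(t,S_1,\dotsc,S_n) = (\phi(t,S_1),\dotsc,\phi(t,S_n))$ always lies in $\F^n$, since $\phi$ takes values in $\F$; hence this vector belongs to $\FF$ precisely when it satisfies every defining constraint $f_j(x) \leq b_j$. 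Quantifying over $t \in U$ then shows that the condition ``$\varphi(t,S_1,\dotsc,S_n) \in \FF$ for all $t \in U$'' from \cref{thm:gupteC} is logically equivalent to ``the set characterization of $f_j(x) \leq b_j$ holds for each $j \in [m]$'' in the sense of \cref{def:set-char}. This equivalence is the heart of the argument; everything else is bookkeeping.

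With this equivalence in hand, the ``only if'' direction goes as follows. Assume $H = P = \conv(\FF)$. Then $\FF \subseteq \conv(\FF) = H$ is immediate. For each $h \in H = \conv(\FF)$ we have $h \in \FR^n$ (as $\FF \subseteq \F^n \subseteq \FR^n$ and $\FR^n$ is convex), so \cref{thm:gupteC} applies and yields sets $S_1,\dotsc,S_n \in \LL$ with $\mu(S_i) = h_i$ for all $i \in [n]$ and $\varphi(t,S_1,\dotsc,S_n) \in \FF$ for all $t \in U$. By the equivalence above, these sets satisfy the set characterization of each constraint $f_j(x) \leq b_j$, which is exactly what is claimed.

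For the ``if'' direction I would establish the two inclusions $P \subseteq H$ and $H \subseteq P$ separately. The inclusion $P \subseteq H$ follows since $\FF \subseteq H$ by hypothesis and $H$, being a polytope, is convex, whence $P = \conv(\FF) \subseteq H$. For $H \subseteq P$, take any $h \in H \subseteq \FR^n$ together with the associated sets $S_1,\dotsc,S_n \in \LL$ satisfying $\mu(S_i) = h_i$ for all $i \in [n]$ and the set characterizations of all $m$ constraints. By the equivalence this means $\varphi(t,S_1,\dotsc,S_n) \in \FF$ for all $t \in U$, so \cref{thm:gupteC} gives $h \in \conv(\FF) = P$. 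Combining both inclusions yields $H = P$.

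There is no genuine obstacle here: the lemma is a reformulation of \cref{thm:gupteC} tailored to the common situation in which $\FF$ is presented by an explicit list of (not necessarily linear) constraints, and the only point requiring care is the routine but essential remark that $\varphi$ takes values in $\F^n$ by construction, so that testing membership in $\FF$ reduces exactly to testing the $m$ defining inequalities pointwise in $t$. The same reasoning carries the additional conclusion of \cref{Cor:Convex_Combinations} through unchanged, should one wish to extract the explicit convex combination of vertices of $\FF$ representing $h$.
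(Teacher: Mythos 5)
Your proof is correct and follows exactly the route the paper intends: the paper states \cref{Lem:Set_Characterization} without a separate proof, treating it as the immediate consequence of \cref{thm:gupteC} together with the observation (which you make explicit) that $\varphi(t,S_1,\dotsc,S_n)\in\F^n$ always holds, so membership in $\FF$ for all $t\in U$ is equivalent to the conjunction of the set characterizations of the defining constraints. Your added bookkeeping (using convexity of $H$ for $P\subseteq H$ and $H\subseteq\FR^n$ to apply \cref{thm:gupteC}) is exactly the intended argument, so there is nothing to correct.
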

If some concrete function $f$ is given, along with some $ b \in \R $,
then the set characterization for the constraint $ f(x) \leq b $
given in \cref{def:set-char} can be simplified in many cases.
To give a first example, take the constraint $ x_1 \leq x_2 $
for some binary variables $ x_1, x_2 \in \F $.
Its set characterization reads
\begin{equation*}
	\phi(t, S_1) \leq \phi(t, S_2) \quad \forall t \in U.
\end{equation*}
Recalling the definition of $ \phi $,
this says that if for some $ t \in U $ the condition $ t \in S_1 $ holds,
then $ t \in S_2 $ follows.
So we can equivalently state the set characterization as $ S_1 \subseteq S_2 $.

For many common combinatorial constraints,
we have derived corresponding simplified set characterizations,
which are displayed in \cref{tb:set-char}.
\begin{table}[h]
	\centering
	\caption{Simplified set characterizations
		for combinatorial constraints with coefficients from $ \set{-1, 0, 1} $.}
	\label{tb:set-char}
	\begin{tabular}{cc}
		\toprule
		Constraint & Set characterization\\
		\midrule
		$ x_i \leq y_j $ & $ S_i \subseteq S_j $\\
		$ x_i \geq y_j $ & $ S_i \supseteq S_j $\\
		$ x_i = y_j $ &  $ S_i = S_j $\\
		\midrule
		$ \sum_{i \in I} x_i \leq 1 $ & $ S_i \cap S_j = \emptyset
			\quad \forall i, j \in I,\, i \neq j $\\
		$ \sum_{i \in I} x_i \geq 1 $ & $ \cup_{i \in I} S_i = U $\\
		$ \sum_{i \in I} x_i = 1 $ & $ \cup_{i \in I} S_i = U,\, S_i \cap S_j = \emptyset
			\quad \forall i, j \in I,\, i \neq j $\\
		\midrule
		$ \sum_{i \in I} x_i \leq k $ & $ \card{\set{i \in I \mid t \in S_i}} \leq k
			\quad \forall t \in U $\\
		$ \sum_{i \in I} x_i \geq k $ & $ \card{\set{i \in I \mid t \in S_i}} \geq k
			\quad \forall t \in U $\\
		$ \sum_{i \in I} x_i = k $ & $ \card{\set{i \in I \mid t \in S_i}} = k
			\quad \forall t \in U $\\
		\midrule
		$ \sum_{i \in I} x_i \leq \sum_{j \in J} y_j $ &
			$ \card{\set{i \in I \mid t \in S_i}} \leq \card{\set{j \in J \mid t \in S_j}}
				\quad \forall t \in U $\\
		$ \sum_{i \in I} x_i \geq \sum_{j \in J} y_j $ &
			$ \card{\set{i \in I \mid t \in S_i}} \geq \card{\set{j \in J \mid t \in S_j}}
				\quad \forall t \in U $\\
		$ \sum_{i \in I} x_i = \sum_{j \in J} y_j $ &
			$ \card{\set{i \in I \mid t \in S_i}} = \card{\set{j \in J \mid t \in S_j}}
				\quad \forall t \in U $\\
		\midrule
		$ x_i y_j = z_{ij} $ & $ S_i \cap S_j = S_{ij} $\\
		\bottomrule
	\end{tabular}
\end{table}
The set characterizations of the constraints defining~$P$
as in \cref{Lem:Set_Characterization}
provide hints on how to effectively design the sets $ S_1, \ldots, S_n $
as we will see in the following indicative examples.

\subsection{Connection between set characterization and algorithmic set construction}
\label{sec:mc-cormick1}
We consider the McCormick-linearization of a bilinear term as a first example
to illustrate the use of set characterizations within convex-hull proofs.
The example also illustrates that the set characterizations typically depend
on the inequality description of $ \FF $.
Let
\begin{equation*}
	H \coloneqq \set{(x, y, z) \in \FR^3 \mid
		z \geq 0,\, z \leq x,\, z \leq y,\, x + y - z \leq 1}.
\end{equation*}
We will compare the following two possible representations of the integral points in~$H$:
\begin{alignat}{1}
	\FF_1 &\coloneqq \set{(x, y, z) \in \F^3 \mid z \leq x, z \leq y, x + y - z \leq 1},\label{FF-1}\\
	\FF_2 &\coloneqq \set{(x, y, z) \in \F^3 \mid xy = z}. \label{FF-2}
\end{alignat}
In \cref{FF-2}, one single non-linear constraint
replaces the three linear constraints in~\cref{FF-1}. 
For each constraint in the two representations,
we need to derive a set characterization.
We can directly take them from \cref{tb:set-char}:
\begin{equation*}
	S_z \subseteq S_x, \quad S_z \subseteq S_y, \quad S_x \cap S_y \subseteq S_z
	\label{S-1}
\end{equation*}
for $ \FF_1 $ and
\begin{equation}
	S_x \cap S_y = S_z
	\label{S-2}
\end{equation}
for $ \FF_2 $.
One directly sees that both set characterizations are equivalent.
However, the second one is more compact.
In both cases, the sets need to have Lebesgue measures
equalling the coordinates of the arbitrary point $ h \in H $ to represent
and need to satisfy the set characterizations of the constraints defining the vertex set.
Throughout this article, we will give the convex-hull proofs via Zuckerberg's method
mainly in the form of algorithmic schemes to define sets
fulfilling these two conditions of \cref{Lem:Set_Characterization}.
As we will see, all these algorithms can be illustrated
via diagrams depicting the constructed sets in a coordinate system.

The construction rule for the sets in the McCormick-example
is given via the routine \textsc{Define-McCormick-Subsets} in \cref{fig:mc-cormick}.
\begin{figure}[h]
    \centering
	\begin{subfigure}{.5\textwidth}
		\centering
		\begin{algorithmic}[1]
			\Function{Define-McCormick-Subsets}{}
				\State $ S_x \coloneqq [0, h_x) $
				\State $ S_y \coloneqq [h_x - h_z, h_x - h_z + h_y) $
				\State $ S_z \coloneqq [h_x - h_z, h_x) $
			\EndFunction
		\end{algorithmic}
    \end{subfigure}
    \hfil
    \begin{subfigure}{.45\textwidth}
		\centering
		\begin{tikzpicture}[xscale=4,yscale=1]
			\draw[thick] (0,0) -- (1,0);
			\draw[thick] (0,.1) -- (0,.-.1);
			\draw (.2,.05) -- (.2,-.05);
			\draw (.4,.05) -- (.4,-.05);
			\draw (.6,.05) -- (.6,-.05);
			\draw (.8,.05) -- (.8,-.05);
			\draw[thick] (1,.1) -- (1,.-.1);
			\node at (0,-.4) {$0$};
			\node at (1,-.4) {$1$};
			
			\newcommand\AONE{.5*0}
			\newcommand\ATWO{.5*1}
			\newcommand\ATHREE{.5*2}
			
			\newcommand\BLOCKHEIGHT{0.4}
			\draw[fill=red!30] (0,0.2+\AONE) rectangle ++(0.3,\BLOCKHEIGHT);
			\draw[fill=blue!30] (0.3,0.2+\AONE) rectangle ++(0.2,\BLOCKHEIGHT);
			\draw[fill=blue!30] (0.3,0.2+\ATWO) rectangle ++(0.2,\BLOCKHEIGHT);
			\draw[fill=green!30] (0.5,0.2+\ATWO) rectangle ++(0.5,\BLOCKHEIGHT);
			\draw[fill=blue!30] (0.3,0.2+\ATHREE) rectangle ++(0.2,\BLOCKHEIGHT);

			\node at (-.1,.4+\AONE) {$S_{x}$};
			\node at (-.1,.4+\ATWO) {$S_{y}$};
			\node at (-.1,.4+\ATHREE) {$S_{z}$};
		\end{tikzpicture}  
    \end{subfigure}
	\caption{Routine \textsc{Define-McCormick-Subsets} (top),
		exemplary construction for the point~$h$
		with $ (h_x, h_y, h_z) = (0.5, 0.7, 0.2) $.
		The solution can be written as a convex combination
		of $ h = 0.3 (1, 0, 0) + 0.2 (1, 1, 1) + 0.5 (0, 1, 0) $.
		Those parts of the sets that belong to the same vertices
		are marked with the same colour.
	}
	\label{fig:mc-cormick}
\end{figure}
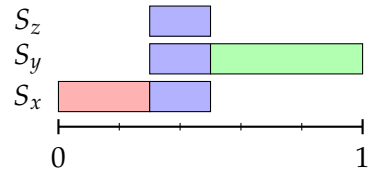
Based on representation~\cref{S-2}, it places~$ S_z $
such that it exhausts the total overlap of~$ S_x $ and~$ S_y $.
By construction, $ \mu(S_x) = h_x $, $ \mu(S_y) = h_y $ and $ \mu(S_z) = h_z $
hold for all $ h \in H $.
The inequalities in the definition of~$H$ further ensure
that the so-defined sets are all subsets of~$U$
This finishes the proof of $ H = \conv\set{(x, y, z) \in \F^3 \mid xy = z} $.

Once the sets for the given point~$h$ are constructed,
\cref{Cor:Convex_Combinations} tells us how to derive the coefficients
to express~$h$ as a convex combination of the vertices of~$H$.
The latter are given by $ \xi_1 \coloneqq (0, 0, 0) $,
$ \xi_2 \coloneqq (1, 0, 0) $, $ \xi_3 \coloneqq (0, 1, 0) $
and $ \xi_4 \coloneqq (1, 1, 1) $ in our example.
Each point $ t \in U $ is now mapped to some vertex~$ \xi_t $ of~$H$
via the mapping~$ \varphi $.
By measuring the union of all points that map to a certain vertex,
we can derive the coefficient for this vertex.
For the routine \textsc{Define-McCormick-Subsets}, we obtain
\begin{alignat*}{1}
	\mu(L_{\xi_1}(S_1, \ldots, S_n)) &= \mu([h_x - h_z + h_y, 1)) = 1 - h_y - h_x + h_z,\\
	\mu(L_{\xi_2}(S_1, \ldots, S_n)) &= \mu([0, h_x - h_z)) = h_x - h_z,\\
	\mu(L_{\xi_3}(S_1, \ldots, S_n)) &= \mu([h_x, h_x - h_z + h_y)) = h_y -h_z,\\
	\mu(L_{\xi_4}(S_1, \ldots, S_n)) &= \mu([h_x - h_z, h_x)) = h_z.
\end{alignat*}
Thus, we know $ h = (1 - h_x - h_y + h_z) \xi_1
	+ (h_x - h_z) \xi_2
	+ (h_y -h_z) \xi_3
	+ h_z \xi_4 $,
\cf the example given in \cref{fig:mc-cormick}.
 
\subsection{Non-uniqueness of set representations}

In a second example, we illustrate that the choice of the set construction
used for \cref{Lem:Set_Characterization}
determines which vertices are used to write a point $ h \in H $ as a convex combination
of vertices in $ \FF $.
In particular, this choice is not unique.

Consider the two-dimensional unit-box $ H \coloneqq \FR^2 $ and take $ \FF \coloneqq \F^2 $.
As there are no constraints on the binary points in $ \FF $,
no set characterization needs to hold.
We thus only have to fulfil the measure criteria.
\begin{figure}[h]
    \centering
	\begin{subfigure}{.5\textwidth}
		\begin{algorithmic}[1]
			\Function{Define-Box-Subsets-A}{}
			\State $ S_x \coloneqq [0, h_x) $
			\State $ S_y \coloneqq [0, h_y) $
			\EndFunction
		\end{algorithmic}
	\end{subfigure}
	\hfil
	\begin{subfigure}{.45\textwidth}
		\centering
		\begin{tikzpicture}[xscale=4,yscale=1]
			\draw[thick] (0,0) -- (1,0);
			\draw[thick] (0,.1) -- (0,.-.1);
			\draw (.2,.05) -- (.2,-.05);
			\draw (.4,.05) -- (.4,-.05);
			\draw (.6,.05) -- (.6,-.05);
			\draw (.8,.05) -- (.8,-.05);
			\draw[thick] (1,.1) -- (1,.-.1);
			\node at (0,-.4) {$0$};
			\node at (1,-.4) {$1$};
			
			\newcommand\AONE{.5*0}
			\newcommand\ATWO{.5*1}
			
			\newcommand\BLOCKHEIGHT{0.4}
			\draw[fill=blue!30] (0,0.2+\AONE) rectangle ++(0.5,\BLOCKHEIGHT);
			\draw[fill=blue!30] (0,0.2+\ATWO) rectangle ++(0.5,\BLOCKHEIGHT);

			\node at (-.1,.4+\AONE) {$S_{x}$};
			\node at (-.1,.4+\ATWO) {$S_{y}$};
		\end{tikzpicture} 
    \end{subfigure}
	\par\vskip\floatsep
	\begin{subfigure}{.5\textwidth}
		\begin{algorithmic}[1]
			\Function{Define-Box-Subsets-B}{}
			\State $ S_x \coloneqq [0, h_x) $
			\State $ S_y \coloneqq [1 - h_y, 1) $
			\EndFunction
		\end{algorithmic}
	\end{subfigure}\hfil
	\begin{subfigure}{.45\textwidth}
		\centering
		\begin{tikzpicture}[xscale=4,yscale=1]
		\draw[thick] (0,0) -- (1,0);
		\draw[thick] (0,.1) -- (0,.-.1);
		\draw (.2,.05) -- (.2,-.05);
		\draw (.4,.05) -- (.4,-.05);
		\draw (.6,.05) -- (.6,-.05);
		\draw (.8,.05) -- (.8,-.05);
		\draw[thick] (1,.1) -- (1,.-.1);
		\node at (0,-.4) {$0$};
		\node at (1,-.4) {$1$};
		
		\newcommand\AONE{.5*0}
		\newcommand\ATWO{.5*1}
		
		\newcommand\BLOCKHEIGHT{0.4}
		\draw[fill=green!30] (0,0.2+\AONE) rectangle ++(0.5,\BLOCKHEIGHT);
		\draw[fill=orange!30] (0.5,0.2+\ATWO) rectangle ++(0.5,\BLOCKHEIGHT);
		
		\node at (-.1,.4+\AONE) {$S_{x}$};
		\node at (-.1,.4+\ATWO) {$S_{y}$};
		\end{tikzpicture}
	\end{subfigure}
	\caption{The two routines \textsc{Define-Box-Subsets-A} (left top)
		and \textsc{Define-Box-Subsets-B} (left bottom)
		together with exemplary constructions for the point~$h$
		with $ (h_x, h_y) = (0.5, 0.5) $ for \textsc{Define-Box-Subsets-A} (right top)
		and \textsc{Define-Box-Subsets-B} (right bottom).
		Routine \textsc{Define-Box-Subsets-A} results in the representation
		$ h = 0.5 (1, 1) + 0.5 (0,0 ) $, while \textsc{Define-Box-Subsets-B} yields
		$ h = 0.5 (1, 0) + 0.5 (0, 1) $.
		Those parts of each set which belong to the same vertices
		in the convex combination representing $h$ are marked with the same colour.
	}
	\label{fig:box}
	\null\vfill\null
\end{figure}
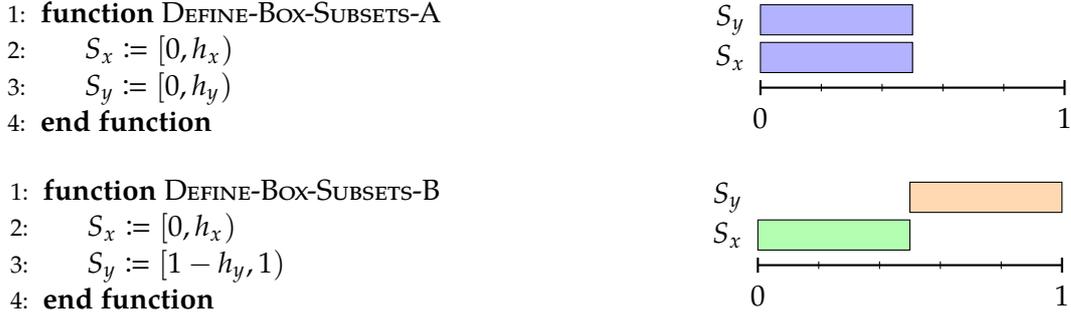
In \cref{fig:box}, we give two different construction rules for the sets $ S_x $ and $ S_y $
via the routines \textsc{Define-Box-Subsets-A} and \textsc{Define-Box-Subsets-B}.
Note that the definition of~$H$ ensures
that the sets~$ S_x $ and~$ S_y $ are always subsets of~$U$.
Both routines define valid choices for the two sets for each point $ h \in H $.
However, the resulting convex combinations of~$h$ via vertices in~$ \FF $
obtained via \cref{Cor:Convex_Combinations} are different from each other.

\subsection{Connection to the original method}

Zuckerberg's method for proving convex-hull characterizations
was first published in concise form in \cite{zuckerberg2016geometric},
although an antecedent had already appeared in his PhD thesis (see \cite{zuckerberg2004set}).
The main result is stated there in a very general form:
instead of choosing subsets of a real line segment as described above,
the sets could be chosen from an arbitrary measure space. 
This requires more complex definitions and notation.
We will shortly review Zuckerberg's original theorem here
to put our approaches into context
before we continue with and build upon the condensed version.
 
Using the same notation as above,
we are given a $0$/$1$-polytope $ P = \conv(\FF) $ with vertex set $ \FF \subseteq \F^n $
together with a second polytope~$H$, and the task is to prove $ H \subseteq P $.
According to Zuckerberg's original approach, we first need to represent $ \FF $
as a finite set-theoretic expression
consisting of unions, intersections and complements of the sets
\begin{equation*}
	A_i \coloneqq \SSet{a \in \F^n}{a_i = 1}, \quad i = 1, \ldots, n.
	\label{equ:zuck-a}
\end{equation*}
Let $ F(\set{A_i}) $ be such a representation of~$ \FF $.
Note that this is possible for any~$ \FF $ as we can always choose 
$ \FF = F_1(A_1, \ldots, A_n) \coloneqq
	\cup_{v \in \FF} ((\cap_{i \in [n]:\, v_i = 1} A_i) \cap
		(\cap_{i \in [n]:\, v_i = 0} \bar{A}_i)) $,
where $ \bar{A}_i $ denotes the complement of~$ A_i $ (in $ \F^n $).
Zuckerberg's original result can now be stated as follows.
\begin{theorem}[{\cite[Theorem 7]{zuckerberg2016geometric}}]
	Let $ \FF \subseteq \F^n $, and let $ F(\set{A_i}) $
	be a set-theoretic expression
	of finitely many unions, intersections and complementations
	of sets from $ \set{A_1, \ldots, A_n} $ such that $ F(A_1, \ldots, A_n) = \FF $.
	Further, let $ \QQ = (\hat{U}, \hat{\LL}) $ be any algebra
	with a basic set~$ \hat{U} $ and a family~$ \hat{\LL} $
	of subsets of~$ \hat{U} $,
	and let $ \Xi $ be any probability measure on $ \QQ $.
	Then $ x \in \FR^n $ belongs to $ \conv(\FF) $
	if there are sets $ S_i \in \bar{\LL} $, $ i = 1, \ldots, n $
	with $ x_i = \Xi(S_i) $ for all~$i$ and $ \Xi(F(\set{S_i})) = 1 $.
	\label{thm:zuck_original}
\end{theorem}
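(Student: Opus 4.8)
The plan is to extract an explicit, finite convex combination of the vertices in $\FF$ directly from the sets $S_1,\dots,S_n$, exploiting the fact that the Boolean set operations occurring in the expression $F$ commute with pointwise membership testing. For a point $t \in \hat{U}$, introduce its incidence vector $v(t) \in \F^n$ with $v(t)_i = 1$ iff $t \in S_i$. The first — and crucial — step is an induction on the structure of $F$ establishing that
\[
	t \in F(S_1,\dots,S_n) \iff v(t) \in F(A_1,\dots,A_n) \qquad \text{for every } t \in \hat{U}.
\]
The base case $F = A_i$ is immediate, since $t \in S_i \iff v(t)_i = 1 \iff v(t) \in A_i$; the union and intersection cases follow at once from the definition of $v(t)$; and for a complement one uses that $v(t)$ always lies in $\F^n$, so complementing in $\hat{U}$ on the left matches complementing within $\F^n$ on the right. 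As $F(A_1,\dots,A_n) = \FF$ by hypothesis, this shows $t \in F(\set{S_i}) \iff v(t) \in \FF$.

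Next I would organize $\hat{U}$ according to the value of $v(t)$. For $\xi \in \F^n$ set $L_\xi \coloneqq \bigcap_{i:\,\xi_i=1} S_i \cap \bigcap_{i:\,\xi_i=0} \bar{S}_i$, which is again a member of the algebra $\hat{\LL}$. The family $\set{L_\xi}_{\xi\in\F^n}$ partitions $\hat{U}$, with $t \in L_{v(t)}$ for all $t$, so the first step yields $F(\set{S_i}) = \bigsqcup_{\xi\in\FF} L_\xi$. Defining $\lambda_\xi \coloneqq \Xi(L_\xi) \ge 0$ for $\xi \in \FF$, finite additivity of $\Xi$ together with the hypothesis $\Xi(F(\set{S_i})) = 1$ then gives $\sum_{\xi\in\FF}\lambda_\xi = 1$ and, simultaneously, that $\Xi$ of the leftover $\bigcup_{\xi\in\F^n\setminus\FF} L_\xi$ equals $0$.

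Finally I would read off the coordinates. Since $L_\xi \subseteq S_i$ whenever $\xi_i = 1$ and $L_\xi \cap S_i = \emptyset$ whenever $\xi_i = 0$, one has $S_i = \bigsqcup_{\xi\in\F^n:\,\xi_i=1} L_\xi$, hence
\[
	x_i = \Xi(S_i) = \sum_{\xi\in\F^n:\,\xi_i=1}\Xi(L_\xi) = \sum_{\xi\in\FF:\,\xi_i=1}\lambda_\xi = \sum_{\xi\in\FF}\lambda_\xi\,\xi_i,
\]
the third equality holding because the terms with $\xi\notin\FF$ carry zero measure. As this holds for every $i\in[n]$, we obtain $x = \sum_{\xi\in\FF}\lambda_\xi\xi$ (a finite sum, since $\FF\subseteq\F^n$) with nonnegative weights summing to one, so $x \in \conv(\FF)$; in fact one even gets the weights explicitly as $\lambda_\xi = \Xi(L_\xi)$, in the spirit of \cref{Cor:Convex_Combinations}. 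The main obstacle is really only the structural induction of the first step: it is the one place that uses the hypothesis that $F$ is built from \emph{finitely many} unions, intersections and complements, and the only subtlety is keeping straight that complementation is taken in $\hat{U}$ on one side of the equivalence and in $\F^n$ on the other. Everything after that is routine finitely-additive measure bookkeeping.
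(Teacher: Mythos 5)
Your proof is correct. Be aware, though, that the paper itself does not prove this statement: it is quoted verbatim as Zuckerberg's original Theorem~7 and only serves to put the simplified framework into context, so there is no in-paper proof to compare against. What you wrote is essentially the standard argument, and it runs parallel to the proofs the paper \emph{does} give for the simplified and extended versions (the reverse directions of \cref{thm:gupteC} and \cref{thm:extended_zucker}, and \cref{Cor:Convex_Combinations}): there, too, the convex weights are obtained by measuring, for each $\xi \in \FF$, the set of points $t$ whose incidence vector equals $\xi$, which is precisely your $\lambda_\xi = \Xi(L_\xi)$ with $L_\xi = \bigcap_{i:\,\xi_i = 1} S_i \cap \bigcap_{i:\,\xi_i = 0} \bar{S}_i$. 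The ingredient specific to the original formulation, which you supply explicitly, is the structural induction showing $t \in F(\set{S_i}) \iff v(t) \in F(\set{A_i}) = \FF$; this is exactly the bridge the paper alludes to informally when it explains that the condition $\Xi(F(\set{S_i})) = 1$ can be traded for $\varphi(t, S_1, \ldots, S_n) \in \FF$ for (almost) all $t$, so your write-up in effect reduces the original theorem to the mechanism behind \cref{thm:gupteC}. Two small points you use implicitly and could state for completeness: the sets $L_\xi$ and $F(\set{S_i})$ belong to $\hat{\LL}$ because an algebra is closed under finitely many unions, intersections and complements (this is where finiteness of the expression and of the index set enters); and $\Xi(L_\xi) = 0$ for $\xi \notin \FF$ follows from nonnegativity together with finite additivity, which is all the additivity the argument needs, matching the generality of the hypothesis on $\Xi$. (The occurrence of $\bar{\LL}$ in the statement is a typo for $\hat{\LL}$; your reading of it is the intended one.)
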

In order to use \cref{thm:zuck_original}, we first need to find
a set-theoretic expression to represent $ \FF $.
While the representation~$ F_1 $ is always possible,
it is not helpful, since it does not allow to easily derive
criteria for how to find suitable sets $ S_i $.
For instance, for the McCormick-example in \cref{sec:mc-cormick1},
the vertex set can be written as $ \FF = A_x \cap A_y \Leftrightarrow A_z $.
One can easily verify that if the sets $ S_x $, $ S_y $ and $ S_z $
satisfy condition~\cref{S-2}, namely $ S_x \cap S_y = S_z $,
then $ \Xi(F(\set{S_i})) = 1 $ holds.
Conversely though, there is no straightforward way to the derive set characterizations
from the set-theoretic expression~$F$.
The possibility to directly derive set characterizations
from the constraints defining~$ \FF $, however,
significantly reduces the effort to conduct Zuckerberg convex-hull proofs
and is only given in the simplified version.
To introduce this concept is therefore one of the main contributions of this article.

The simplified version of Zuckerberg's results we build on
was introduced in \cite{gupte2020extended}
by choosing $ \hat{U} = U $, $ \hat{\LL} = \LL $ and $ \Xi = \mu $.
The condition $ \Xi(F(\set{S_i})) = 1 $ can then be replaced by $ F(\set{S_i}) = U $.
The authors also show that this allows to drop the set-theoretic expression~$F$ entirely
and further allows to replace $ F(\set{S_i}) = U $
with $ \varphi(t, S_1, \ldots, S_n) \in \FF $ for all $ t \in U $.
Their main result is then \cref{thm:gupteC} from above.

The real line is probably the simplest possible choice
for the measure space in \cref{thm:zuck_original},
and via \cref{thm:gupteC} it has the same expressive power as any other measure space.
Thus, on the one hand, the choice of more complex measure spaces
might allow for easier-to-state convex-hull proofs in certain cases
(which Zuckerberg himself states as an avenue for future research).
On the other hand, however, the real line is sufficient
to prove a vast variety of results, as the examples in the following section
as well as those provided in \cite{zuckerberg2016geometric,gupte2020extended,harris2020convex,BMS2020} show.
Furthermore, it allows for a much more lightweight notation
and enables us to use the concept of set characterizations we have introduced above.
Finally, this concise form will enable us to derive several significant extensions
of Zuckerberg's approach, in particular a proof technique applicable to general convex sets
and criteria for convex-hull proofs for graphs of certain functions over polytopal domains.

\section{Set characterizations and proof strategies for binary problems}

In the following, we will show how to use our concept of set characterizations
to give Zuckerberg convex-hull proofs for more complex $0$/$1$-polytopes.
We do this by reproving several known, popular results
to demonstrate how set characterizations
help define the sets~$ S_i $ for \cref{Lem:Set_Characterization}.
The order in which to define these sets is highly problem specific.
We will see that very often a certain \qm{natural} ordering can be used
to successfully conduct convex-hull proofs.
In an example involving the shortest-path problem,
we will use a topological ordering of the nodes of the underlying graph.
The second example for a certain set-packing problem
shows how to exploit a depth-first-search on a tree.
It will also turn out here that we can use Zuckerberg's method to compute
the vertices spanning a point inside the polytope,
which was not straightforward to do beforehand.
And in the last example, where we consider the odd-hole inequality
for the stable-set problem, we follow neighbourly nodes along the underlying cycle.
These examples are representative for three promising general strategies
to define the sets~$ S_i $.
The first one is a greedy strategy which places the sets according to local criteria.
The second strategy extracts the placement of a group of sets
from the solution of an auxiliary optimization problem.
Finally, the third strategy transforms the point $ h \in H $
to an auxiliary point~$ \bar{h} \in H $
for which the placement of the sets is easier,
and afterwards retransforms the sets in order to the express the original point.

The core of a Zuckerberg convex-hull proof
is an algorithmic scheme to define the sets required in \cref{Lem:Set_Characterization}.
To this end, we first define the subroutine \textsc{Match}
in \cref{fig:def_match}.
It is useful in problems where the feasible set of binary points
is constrained by multiple-choice constraints. 
Its inputs are a set $ S \in \LL $
together with a list of diameters $ (w_1, \ldots, w_k) \in \R_+^n $
for some $ k \geq 1 $.
The output is then a list of subsets $ (S_1, \ldots, S_k) $ of~$S$
with $ \mu(S_i) = w_i $ for all $ i \in [k] $.
If $ w_1 + \ldots + w_k \leq \mu(S) $ holds, these subsets are pairwise disjoint
(\cf the set characterization for a multiple-choice constraint
stated in \cref{tb:set-char}).
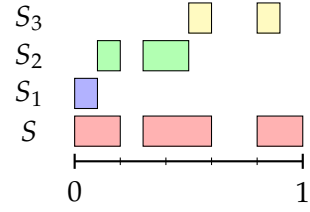
\begin{figure}[h]
	\hfill
	\begin{subfigure}[b]{0.58\textwidth}
		\begin{algorithmic}[1]
			\Require{$ S \in \LL,
				(w_1, \ldots, w_k), w_i \in [0, 1), i \in [k] $}
			\Ensure{$ (S_1, \ldots, S_k) $ with
				$ S_i \subseteq S $ for $ i \in [k] $. 
				If $ w_1 + \ldots + w_k \leq \mu(S) $ then
				$ S_i \cap S_j = \emptyset $ for $ i, j \in [k] $
				with $ i \neq j $}
			\Function{Match}{$ S, (w_1, \ldots, w_k) $}
			\State $ t_0 \leftarrow 0 $
			\For{$ r = 1, \ldots, k $}
			\State $ t_r \leftarrow
				\min\set{t \in S \mid \mu(S \cap [t_{r - 1}, t]) = w_r} $
			\If{$t_r < +\infty$}
				\State $ S_r \coloneqq S \cap [t_{r - 1}, t_r) $
			\Else
				\State $ t_r \leftarrow
					\min\set{t \in S \mid \mu(S \cap [0, t])$\\
					\qquad\qquad\qquad\qquad\qquad\quad\qquad
						$ = \mu(S \cap [t_{r-1},1])} $ 
				\State $ S_r \coloneqq S \cap ([t_{r - 1}, 1) \cup [0, t_r)) $
			\EndIf
			\EndFor
			\State{\textbf{return} $ (S_1, \ldots, S_k) $}
			\EndFunction
		\end{algorithmic}
	\end{subfigure}
	\hfill
	\begin{subfigure}[b]{0.35\textwidth}
		\begin{tikzpicture}[xscale=6,yscale=1]
			\draw[thick] (0,0) -- (0.5,0);
			\draw[thick] (0,.1) -- (0,.-.1);
			\draw (.1,.05) -- (.1,-.05);
			\draw (.2,.05) -- (.2,-.05);
			\draw (.3,.05) -- (.3,-.05);
			\draw (.4,.05) -- (.4,-.05);
			\draw[thick] (.5,.1) -- (.5,.-.1);
			\node at (0,-.4) {$0$};
			\node at (.5,-.4) {$1$};
			\draw[fill=red!30] (0,0.2) rectangle ++(0.1,0.4);
			\draw[fill=red!30] (0.15,0.2) rectangle ++(0.15,0.4);
			\draw[fill=red!30] (0.4,0.2) rectangle ++(0.1,0.4);
			\draw[fill=blue!30] (0,0.2+.5*1) rectangle ++(0.05,0.4);
			\draw[fill=green!30] (0.05,0.2+.5*2) rectangle ++(0.05,0.4);
			\draw[fill=green!30] (0.15,0.2+.5*2) rectangle ++(0.1,0.4);
			\draw[fill=yellow!30] (0.25,0.2+.5*3) rectangle ++(0.05,0.4);
			\draw[fill=yellow!30] (0.4,0.2+.5*3) rectangle ++(0.05,0.4);
			
			\node at (-.1,.4+.5*0) {$\S$};
			\node at (-.1,.4+.5*1) {$\S_1$};
			\node at (-.1,.4+.5*2) {$\S_2$};
			\node at (-.1,.4+.5*3) {$\S_3$};
		\end{tikzpicture}
		\hfill\null
	\end{subfigure}
	\caption{Subroutine \textsc{Match} (left)
		and exemplary output for defining three subsets
		of some set~$S$ (right)}
	\label{fig:def_match}
\end{figure}

\subsection{The greedy strategy}
\label{sec:shortest-path}

In the greedy proof strategy,
we place the current set to be defined to the first spot
which satisfies all set characterizations,
without considering the subsequent sets to be placed.
When conducting Zuckerberg proofs,
this is generally the first strategy one should try.
This is because of its simplicity,
and if this strategy works, it typically leads to very short proofs.
We showcase the use of this technique
for the shortest-path polytope on an acyclic graph.

Let $ G = (V, A) $ be a connected, directed and acyclic graph (DAG).
The node set~$V$ contains two special nodes~$s$ and~$d$,
and the goal is to find a path from~$s$ to~$d$.
For ease of exposition, the node~$s$ shall only have outgoing arcs,
while~$d$ only has incoming arcs.
The set of feasible paths can be represented by introducing a binary variable $ x_a \in \F $
for each $ a \in A $ to model the choice of arcs
together with the following system of linear constraints:
\begin{alignat}{1}
	\sum_{a \in \outedges{s}} x_a &= 1 \label{shortpath:1}\\
	\sum_{a \in \outedges{v}} x_a - \sum_{a \in \inedges{v}} x_a &= 0 \quad \forall v \in V \setminus \set{s, d} \label{shortpath:2}\\
	\sum_{a \in \inedges{d}} x_a &= 1 \label{shortpath:3}\\
	0 \leq  x_a & \leq 1. \label{shortpath:4}
\end{alignat}
We now give a Zuckerberg-type proof for the well-known result
stating the integrality of the above system.
\begin{theorem}
	Let $ P \coloneqq \conv\set{x \in \F^{\card{A}} \mid
			\cref{shortpath:1,shortpath:2,shortpath:3,shortpath:4}} $
	be the shortest-path polytope
	and $ H \coloneqq \set{x \in \FR^{\card{A}} \mid
			\cref{shortpath:1,shortpath:2,shortpath:3,shortpath:4}} $
	its linear relaxation.
	Then we have $ P = H $.
\end{theorem}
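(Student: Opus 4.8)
The plan is to invoke \cref{Lem:Set_Characterization} with the natural inequality description $\FF \coloneqq \set{x \in \F^{\card{A}} \mid \cref{shortpath:1,shortpath:2,shortpath:3,shortpath:4}}$. The inclusion $\FF \subseteq H$ is trivial since $\F \subseteq \FR$, so the whole work lies in producing, for an arbitrary $h \in H$, sets $S_a \in \LL$ for $a \in A$ with $\mu(S_a) = h_a$ that satisfy the set characterizations of the constraints defining $\FF$. Reading these off \cref{tb:set-char}: the equality constraint \cref{shortpath:1} asks the sets $(S_a)_{a \in \outedges{s}}$ to partition $U$, the equality constraint \cref{shortpath:3} asks $(S_a)_{a \in \inedges{d}}$ to partition $U$, and the flow-conservation constraint \cref{shortpath:2} at a node $v$ becomes $\card{\set{a \in \outedges{v} \mid t \in S_a}} = \card{\set{a \in \inedges{v} \mid t \in S_a}}$ for all $t \in U$; the bound constraints \cref{shortpath:4} are automatic once the $S_a$ are subsets of $U$ with the right measures.

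The construction follows the \emph{greedy} strategy and processes the nodes in a topological order $v_1 = s, v_2, \ldots, v_n = d$ of the DAG $G$ (which exists and can be chosen with $s$ first and $d$ last, since $s$ has only outgoing and $d$ only incoming arcs). First I would treat $s$ by calling \textsc{Match}$(U, (h_a)_{a \in \outedges{s}})$; since $\sum_{a \in \outedges{s}} h_a = 1 = \mu(U)$ by \cref{shortpath:1}, the returned sets are pairwise disjoint and partition $U$. Then, at an internal node $v = v_k$, all arcs $a \in \inedges{v}$ have their tail among $v_1, \ldots, v_{k-1}$ by the topological order, so their sets $S_a$ are already defined; I set $T_v \coloneqq \bigcup_{a \in \inedges{v}} S_a$ and call \textsc{Match}$(T_v, (h_a)_{a \in \outedges{v}})$. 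Since \textsc{Match} simply places each requested subset at the leftmost admissible position without regard to the subsequent ones, no look-ahead is needed, which is exactly what makes this a greedy proof scheme; as usual the run of the algorithm can be depicted by a diagram in the spirit of \cref{fig:def_match}.

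The technical core is the following invariant, to be shown by induction on $k$: after the nodes $v_1, \ldots, v_k$ have been processed, for every $t \in U$ the arcs $a$ with $t \in S_a$ whose tail lies in $\set{v_1, \ldots, v_k}$ form a simple directed path starting at $s$ whose last node $w$ satisfies $w = d$ or $w \notin \set{v_1, \ldots, v_k}$. The induction step exploits that a simple path enters each node at most once: this forces the incoming sets at $v_k$ to be pairwise disjoint, so $\mu(T_{v_k}) = \sum_{a \in \inedges{v_k}} h_a = \sum_{a \in \outedges{v_k}} h_a$ by \cref{shortpath:2} and hence \textsc{Match} returns an exact partition of $T_{v_k}$, while the topological order guarantees that the heads of the newly assigned arcs come strictly after $v_k$, so the extended object is again such a path. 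Applying the invariant with $k = n$ shows that for every $t \in U$ the set $\set{a \in A \mid t \in S_a}$ is a simple $s$-$d$ path, so $\varphi(t, S_1, \ldots, S_{\card{A}})$ is the incidence vector of such a path and in particular lies in $\FF$; together with $\mu(S_a) = h_a$ this is exactly what \cref{Lem:Set_Characterization} requires, yielding $h \in P$, hence $H \subseteq P$ and finally $P = H$. I expect the verification of this invariant to be the main obstacle, since it is the single statement that simultaneously certifies the set characterization of flow conservation (through the disjointness of the incoming sets) and the set characterization of \cref{shortpath:3} (through the fact that every point of $U$ is eventually routed all the way to $d$).
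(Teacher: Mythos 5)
Your proposal is correct and follows essentially the same route as the paper: the greedy strategy that processes nodes in topological order and calls \textsc{Match} on $[0,1)$ at $s$ and on the union of the incoming sets at every other node, then invokes \cref{Lem:Set_Characterization} with the set characterizations \cref{st:char1,st:char2,st:char3}. Your inductive invariant (each $t \in U$ traces a simple path from $s$ ending at $d$ or at a not-yet-processed node) is exactly the justification the paper leaves implicit when it asserts that the construction satisfies the set characterizations, so it is a welcome elaboration rather than a deviation.
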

\begin{proof}
It is obvious that $ P \subseteq H $.
In order to prove $ H \subseteq P $,
we need to transform
the constraints \cref{shortpath:1,shortpath:2,shortpath:3}
into set characterizations.
Referring to \cref{tb:set-char},
we can directly state them as follows:
\begin{alignat}{1}
	\card{\set{i \in \outedges{s} \mid t \in S_i}} = 1
		& \quad \forall t \in U,\label{st:char1}\\
	\card{\set{i \in \outedges{v} \mid t \in S_i}}
		= \card{\set{j \in \inedges{v} \mid t \in S_j}}
		& \quad \forall v \in V \setminus \set{s, d},
			\forall t \in U,\label{st:char2}\\
	\card{\set{i \in \inedges{d} \mid t \in S_i}} = 1
		& \quad \forall t \in U.\label{st:char3}
\end{alignat}
Note that inequalities~\cref{shortpath:4}
do not have a set characterization of their own above
as they are already implied by the fact
that all sets need to be subsets of $ U = [0, 1) $. 
Further, inequality~\cref{shortpath:3} is redundant
and only stated for better readability.
Therefore, set characterization~\cref{st:char3}
is already implied by~\cref{st:char1} and~\cref{st:char2}.

For each point $ h \in H $,
we now need to find sets~$ S_a $ for all $ a \in A $
such that $ \mu(S_a) = h_a $
as well as set characterizations \cref{st:char1,st:char2,st:char3} hold.
The sets~$ S_a $ are defined
via the routine \textsc{Define-Shortest-Path-Subsets}
presented in \cref{fig:zuckerberg-stpath3}.
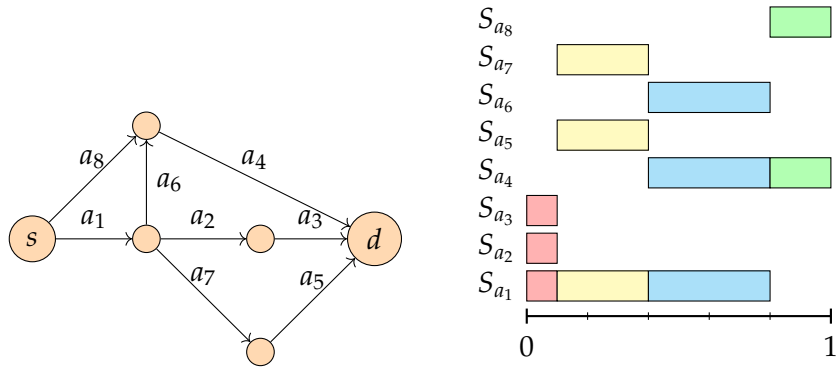
\begin{figure}
	\null\vfill\null
	\begin{algorithmic}[1]
		\Function{Define-Shortest-Path-Subsets}{}
			\For{$ v \in \textsc{TopologicalSort(V)} $}
				\If{$ v = s $}
					\State $ R \leftarrow [0, 1) $
				\Else
					\State $ R \leftarrow \cup_{a \in \inedges{v}} S_a $
				\EndIf
				\State Let $ (a_1, \ldots, a_p) $
					be any ordering of the arcs
					in $ \outedges{v} $, where $ p \coloneqq \card{\outedges{v}} $  
				\State $ (S_{a_1}, \ldots, S_{a_p}) \coloneqq
					\textsc{Match}(R, (h_{a_1}, \ldots, h_{a_p})) $
			\EndFor
		\EndFunction
	\end{algorithmic}
	\null\vfill\null
	\centering 
	\begin{tikzpicture}[xscale=1.5,yscale=1.5]
		\node[shape=circle,draw=black, fill=orange!30] (A) at (0,0) {$s$};
		\node[shape=circle,draw=black, fill=orange!30] (B) at (1,0) {};
		\node[shape=circle,draw=black, fill=orange!30] (C) at (2,0) {};
		\node[shape=circle,draw=black, fill=orange!30] (D) at (3,0) {$d$};
		\node[shape=circle,draw=black, fill=orange!30] (E) at (1,1) {};
		\node[shape=circle,draw=black, fill=orange!30] (F) at (2,-1) {};
		\path [->] (A) edge node[above] {$a_1$} (B);
		\path [->] (B) edge node[above] {$a_2$} (C);
		\path [->] (C) edge node[above] {$a_3$} (D);
		\path [->] (E) edge node[above] {$a_4$} (D);
		\path [->] (F) edge node[above] {$a_5$} (D);
		\path [->] (B) edge node[right] {$a_6$} (E);
		\path [->] (B) edge node[above] {$a_7$} (F);
		\path [->] (A) edge node[above] {$a_8$} (E);
	\end{tikzpicture}  
	\qquad
	\begin{tikzpicture}[xscale=4,yscale=1]
		\draw[thick] (0,0) -- (1,0);
		\draw[thick] (0,.1) -- (0,.-.1);
		\draw (.2,.05) -- (.2,-.05);
		\draw (.4,.05) -- (.4,-.05);
		\draw (.6,.05) -- (.6,-.05);
		\draw (.8,.05) -- (.8,-.05);
		\draw[thick] (1,.1) -- (1,.-.1);
		\node at (0,-.4) {$0$};
		\node at (1,-.4) {$1$};
		
		\newcommand\AONE{.5*0}
		\newcommand\ATWO{.5*1}
		\newcommand\ATHREE{.5*2}
		\newcommand\AFOUR{.5*3}
		\newcommand\AFIVE{.5*4}
		\newcommand\ASIX{.5*5}
		\newcommand\ASEVEN{.5*6}
		\newcommand\AEIGHT{.5*7}
	
		\newcommand\BLOCKHEIGHT{0.4}
		\draw[fill=red!30] (0,0.2+\AONE) rectangle ++(0.1,\BLOCKHEIGHT);
		\draw[fill=red!30] (0,0.2+\ATWO) rectangle ++(0.1,\BLOCKHEIGHT);
		\draw[fill=red!30] (0,0.2+\ATHREE) rectangle ++(0.1,\BLOCKHEIGHT);
		
		\draw[fill=yellow!30] (0.1,0.2+\AONE) rectangle ++(0.3,\BLOCKHEIGHT);
		\draw[fill=yellow!30] (0.1,0.2+\AFIVE) rectangle ++(0.3,\BLOCKHEIGHT);
		\draw[fill=yellow!30] (0.1,0.2+\ASEVEN) rectangle ++(0.3,\BLOCKHEIGHT);
		
		\draw[fill=cyan!30] (0.4,0.2+\AONE) rectangle ++(0.4,\BLOCKHEIGHT);
		\draw[fill=cyan!30] (0.4,0.2+\AFOUR) rectangle ++(0.4,\BLOCKHEIGHT);
		\draw[fill=cyan!30] (0.4,0.2+\ASIX) rectangle ++(0.4,\BLOCKHEIGHT);	
		
		\draw[fill=green!30] (0.8,0.2+\AEIGHT) rectangle ++(0.2,\BLOCKHEIGHT);
		\draw[fill=green!30] (0.8,0.2+\AFOUR) rectangle ++(0.2,\BLOCKHEIGHT);

		\node at (-.1,.4+\AONE) {$S_{a_1}$};
		\node at (-.1,.4+\ATWO) {$S_{a_2}$};
		\node at (-.1,.4+\ATHREE) {$S_{a_3}$};
		\node at (-.1,.4+\AFOUR) {$S_{a_4}$};
		\node at (-.1,.4+\AFIVE) {$S_{a_5}$};
		\node at (-.1,.4+\ASIX) {$S_{a_6}$};
		\node at (-.1,.4+\ASEVEN) {$S_{a_7}$};
		\node at (-.1,.4+\AEIGHT) {$S_{a_8}$};
	\end{tikzpicture}  
	\caption{Routine \textsc{Define-Shortest-Path-Subsets} (top),
		exemplary graph with $6$ nodes (bottom left)
		and possible output of the routine for the point
		$ h = (0.8, 0.1, 0.1, 0.6, 0.3, 0.4, 0.3, 0.2) $ (bottom right).
		There are four paths in the graph,
		namely $ (a_1, a_2, a_3) $, $ (a_8, a_4) $,
		$ (a_1, a_7, a_5) $ and $ (a_1, a_6, a_4) $.
		Those parts of the sets corresponding to a certain path
		are marked in the same colour.}
	\label{fig:zuckerberg-stpath3}
	\null\vfill\null
\end{figure}
The algorithm processes the nodes in the graph in topological order,
where \textsc{TopologicalSort} is any routine producing such an order.
In each iteration, it places the sets for all outgoing arcs of the current node
via a call to the routine \textsc{Match}.
This ensures that conditions \cref{st:char1,st:char2,st:char3} are satisfied.
By starting at node~$s$ and processing the nodes in topological order,
we are sure that once a node is reached all sets for the incoming arcs have been defined.
Finally, the make-up of subroutine \textsc{Match}
guarantees $ \mu(S_a) = h_a $ for all $ a \in A $.
Thus, we have proved $ H \subseteq P $.
\end{proof}
The greedy proof technique is most promising if the problem at hand
only features local constraints (like flow conservation or variable bounds)
as they allow to place the sets in consecutive fashion.
Constraints inducing global couplings between the variables make it harder to use.
In the \supplement, we give further examples
for the use of this technique in the context of clique and stable-set problems.
\subsection{Zuckerberg proofs via feasibility subproblems}

A further strategy for Zuckerberg proofs is to place groups of related sets simultaneously.
If the correct placement of these sets is too difficult to be stated explicitly,
it can be worthwhile to define an auxiliary optimization problem
from whose solution a feasible placement of the sets can be extracted.
It is then necessary to prove that this subproblem
is feasible for each point $ h \in H $ to be tested.
In case the optimization problem is a linear program,
one can try to use the Farkas lemma for the feasibility proof.
We highlight this technique at the hand of a polynomial-time solvable special case
of the clique problem with multiple-choice constraints.

Let $ G = (V, E) $ be an $m$-partite graph for some $ m \geq 1 $,
and let $ \VV = \set{V_1, \ldots, V_m} $ be the corresponding partition
of the node set~$V$.
The \emph{clique problem with multiple-choice constraints (CMPC)}
asks to find a clique of cardinality~$m$ in~$G$.
While it is NP-complete in general to decide if such a clique exists
(see \cite{cycle-free2020}),
there are several relevant special cases where this is possible in polynomial time.
These include CPMC under staircase compatibility (\cite{staircase2018})
and CPMC under a cycle-free dependency graph (\cite{cycle-free2020}).
The referenced works give complete convex-hull descriptions
for these two cases.

The CPMC polytope is the convex hull of all incidence vectors
of $m$-cliques in~$G$.
In the \supplement, we will reprove the result from \cite{staircase2018}
that staircase compatibility allows for totally unimodular formulations
of polynomial size for the CPMC polytope.
Here we consider the case where there are no cyclic dependencies
between the subsets~$ V_i $.
The authors of \cite{cycle-free2020} give a complete convex-hull description
for this case whose correctness they prove via the alternating colouration theorem
(see \cite{hoang1987alternating}).
Alternatively, they hint a proof via the strong perfect-graph theorem
(see \cite{strongperfect2006}).
In the following, we will give a much more elementary convex-hull proof
based on Zuckerberg's method which does neither use
alternating colourations nor perfectness.
In addition, we will be able to state the vertices
spanning any given point in the CPMC polytope,
for which there is no obvious derivation
using the approaches presented in \cite{cycle-free2020}.

Let $ \GG \coloneqq (\VV, \EE) $ with
\begin{equation*}
	\EE \coloneqq \lrset{\set{V_i, V_j} \subseteq \VV \mid
		(\exists u \in V_i)(\exists v \in V_j)\,
			\set{u, v} \notin E}
\end{equation*}
denote the \emph{dependency graph} of~$G$.
Note that $ \set{V_i, V_j} \in \EE $
is equivalent to the subgraph~$ \Gij $ induced by $ V_i \cup V_j $
not being a complete bipartite graph.
For ease of notation, we further define the \emph{neighbourhood} $ N_j(U) \subseteq V_j $
of a subset $ U \subseteq V $ in $ V_j $ as
\begin{equation*}
	N_j(U) \coloneqq \set{v \in V_j \mid
		(\exists u \in U)\, \set{u, v} \in E}.
\end{equation*}
It represents those nodes in $ V_j $
for which there is a compatible node in~$U$.

We will now show that the CPMC polytope is completely described
via the stable-set constraints and the trivial constraints
if the dependency graph is a forest.
\begin{theorem}(\cite[Theorem~3.1]{cycle-free2020})
	Let
	\begin{equation*}
		P(G, \VV) \coloneqq
			\conv\SSet{x \in \F^{\card{E}} }{ 
				\begin{split}
					\sum_{v \in V_i} x_v &= 1
						\quad \forall V_i \in \VV \\
				 	x_i + x_j &\leq 1
				 		\quad \forall \set{i, j} \notin E
				\end{split}}
	\end{equation*}
	be the CPMC polytope and
	\begin{equation*}
		H(G, \VV) \coloneqq
			\conv\SSet{x \in \FR^{\card{E}} }{ 
				\begin{split}
					\sum_{v \in V_i} x_v &= 1
						\quad \forall V_i \in \VV \\
					\sum_{v \in C} x_v &\leq 1
						\quad \forall \text{ stable sets } C \subseteq V
				\end{split}}
	\end{equation*}
	its stable-set relaxation.
	If $ \GG $ has no cycles, we have $ P(G, \VV) = H(G, \VV) $.
	\label{Thm:CPMCF}
\end{theorem}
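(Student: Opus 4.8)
The proof will go through \cref{Lem:Set_Characterization}, applied to the outer description of the binary feasible set $\FF$ underlying $P(G,\VV)$ (the multiple-choice equalities $\sum_{v\in V_i} x_v = 1$ and the non-edge inequalities $x_u + x_v \le 1$). The easy half, $\FF \subseteq H(G,\VV)$, holds because an $m$-clique meets every stable set in at most one node, so its incidence vector satisfies every stable-set inequality (and trivially the equalities). For the second half I read off from \cref{tb:set-char} what the set characterizations of these constraints demand: for each part $V_i$ the sets $\set{S_v : v\in V_i}$ must partition $U$, and for each non-edge $\set{u,v}\notin E$ we need $S_u \cap S_v = \emptyset$. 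Crucially, a non-edge with $u\in V_i$, $v\in V_j$ can only exist when $\Gij$ fails to be complete bipartite, i.e.\ when $\set{V_i,V_j}\in\EE$; so between parts that are non-adjacent in the forest $\GG$ there is nothing to enforce, and the acyclicity of $\GG$ becomes the lever of the whole argument.

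To build the sets for a given $h\in H(G,\VV)$, I would sweep $\GG$ from the roots downward: root every tree of the forest $\GG$ and order the parts so that every non-root part $V_j$ comes after its unique parent $V_i$ in $\GG$. For a root part $V_r$, partition $U$ into consecutive pieces $S_v$ of length $h_v$, $v\in V_r$, e.g.\ by one call to \textsc{Match}$(U,(h_v)_{v\in V_r})$ (see \cref{fig:def_match}); this is legitimate since $\sum_{v\in V_r} h_v = 1$. For a non-root part $V_j$ with parent $V_i$, I would first solve a transportation feasibility subproblem on the bipartite graph $\Gij$ with supplies $h_u$ ($u\in V_i$) and demands $h_v$ ($v\in V_j$): find numbers $f_{uv}\ge 0$ with $f_{uv}=0$ whenever $\set{u,v}\notin E$, $\sum_{v\in V_j} f_{uv} = h_u$ for all $u$, and $\sum_{u\in V_i} f_{uv} = h_v$ for all $v$. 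Using these numbers I carve each parent set $S_u$ into pieces $S_u^v\subseteq S_u$ of measure $f_{uv}$ via \textsc{Match}$(S_u,(f_{uv})_{v\in V_j})$ — legitimate since $\sum_v f_{uv} = h_u = \mu(S_u)$, so the pieces partition $S_u$ — and set $S_v \coloneqq \bigcup_{u\in V_i} S_u^v$.

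Checking the two conditions of \cref{Lem:Set_Characterization} for this construction is then routine. First, $\mu(S_v) = \sum_u f_{uv} = h_v$. Second, $\set{S_v : v\in V_j}$ partitions $U$: for fixed $u$ the pieces $\set{S_u^v : v}$ partition $S_u$, and $\set{S_u : u\in V_i}$ partitions $U$, so the $S_v$ cover $U$ and are pairwise disjoint. Third, for a non-edge $\set{u,v}\notin E$, the parts $V_a\ni u$ and $V_b\ni v$ are $\GG$-adjacent, hence one is the parent of the other; say $V_b$ is the child, so $S_v$ was assembled only from pieces of parent sets $S_{u'}$ with $\set{u',v}\in E$, forcing $u'\neq u$, and therefore $S_v\subseteq\bigcup_{u'\neq u} S_{u'} = U\setminus S_u$, i.e.\ $S_u\cap S_v=\emptyset$. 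This is exactly the place where the forest hypothesis is used: if $V_j$ had two already-placed $\GG$-neighbours we would have to match its partition to two fixed partitions at once, for which the stable-set inequalities would no longer suffice (and indeed $P(G,\VV)=H(G,\VV)$ may fail once $\GG$ has a cycle). Finally, once the $S_v$ are in place, \cref{Cor:Convex_Combinations} hands us an explicit convex combination of $m$-clique incidence vectors equal to $h$ — the promised description of the spanning vertices, which was not available from the approaches of \cite{cycle-free2020}.

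The only real obstacle is to show that the transportation subproblem is feasible for every $h\in H(G,\VV)$, and this is where the stable-set inequalities pay off. By the Gale--Hoffman/deficiency form of Hall's theorem, a feasible plan exists iff $\sum_{v\in W} h_v \le \sum_{u\in N_i(W)} h_u$ for every $W\subseteq V_j$, where $N_i(W) = \SSet{u\in V_i}{(\exists v\in W)\, \set{u,v}\in E}$. Since $\sum_{u\in V_i} h_u = 1$, this rewrites as $\sum_{v\in W} h_v + \sum_{u\in V_i\setminus N_i(W)} h_u \le 1$. But $C \coloneqq W\cup(V_i\setminus N_i(W))$ is a stable set of $G$: $W$ and $V_i\setminus N_i(W)$ lie in single parts, hence are independent, and by the definition of $N_i(W)$ no node of $V_i\setminus N_i(W)$ is adjacent to any node of $W$. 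Hence the inequality above is precisely the stable-set inequality $\sum_{v\in C} h_v \le 1$, which holds because $h\in H(G,\VV)$. (Equivalently, feeding the transportation LP to the Farkas lemma, any infeasibility certificate translates directly into a violated inequality of exactly this type, matching the feasibility-subproblem strategy of this subsection.) With feasibility secured, \cref{Lem:Set_Characterization} gives $H(G,\VV)\subseteq P(G,\VV)$, and together with $P(G,\VV)\subseteq H(G,\VV)$ we conclude $P(G,\VV)=H(G,\VV)$.
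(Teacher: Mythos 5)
Your proposal is correct and follows essentially the same route as the paper: root each tree of $\GG$, place the sets of the root part with \textsc{Match}, and for every child part solve a transportation-type feasibility subproblem between parent and child whose solution dictates how the parent sets are carved up, exactly the feasibility-subproblem strategy the paper uses in \textsc{Define-CMPCF-Subsets}/\textsc{Traverse-Tree}. The one genuine difference is how feasibility of that subproblem is certified: the paper writes down the Farkas alternative of \cref{equ:feasibility_opt_cmpcf} and then massages an arbitrary dual certificate into a $\set{-1,0,1}$-valued one whose support exhibits a violated stable-set inequality, whereas you invoke the Gale--Hoffman/Hall deficiency condition and observe that each deficiency inequality $\sum_{v\in W} h_v \le \sum_{u\in N_i(W)} h_u$ is, after using $\sum_{u\in V_i} h_u = 1$, \emph{literally} the stable-set inequality for $W\cup(V_i\setminus N_i(W))$. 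This is cleaner than the paper's certificate-rounding argument and avoids its somewhat delicate bookkeeping; you also make explicit (via $f_{uv}=0$ on non-edges) a restriction that the paper's LP \cref{equ:feasibility_opt_cmpcf} only states implicitly but needs in order for the subsequent \textsc{Match} step to respect the non-edge characterizations. Both arguments deliver the same conclusion, and both yield the explicit spanning vertices through \cref{Cor:Convex_Combinations}.
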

\begin{proof}
The inclusion $ P(G, \VV) \subseteq H(G, \VV) $ holds trivially.
We now show the reverse inclusion.
The procedure to define the sets $ S_i $ for $ i \in V $
is given via the two routines \textsc{Define-CMPCF-Subsets}
and \textsc{Traverse-Tree} in~\cref{Fig:CPMCF}.
\begin{figure}[h]
	\null\vfill\null
	\begin{algorithmic}[1]
		\Function{Define-CMPCF-Subsets}{}
			\For{each tree in $ \GG $}
				\State Pick some $ V_i $ as the root node \Wlog
				\State Let $ (i_1, \ldots, i_p) $
					be any ordering of the elements in $ V_i $,
					where $ p \coloneqq \card{V_i} $
				\State $ (\S_{i_1}, \ldots, \S_{i_p})
					\coloneqq \textsc{Match}([0, 1),
						(h_{i_1}, \ldots, h_{i_p})) $
				\For{$ V_j \in N(V_i) $}
					\State \textsc{Traverse-Tree}($ V_i $, $ V_j $)
				\EndFor
			\EndFor
		\EndFunction
		\Function{Traverse-Tree}{$ V_i $, $ V_j $}
			\State $ \bar{x} \leftarrow $ Solve \cref{equ:feasibility_opt_cmpcf}
				with $ V_i $ and $ V_j $ and $h$ as input
			\State Let $ (j_1, \ldots, j_p) $
				be any ordering of the elements in $ V_j $,
				where $ p \coloneqq \card{V_j} $
			\For{$ j \in V_j $}
				\State $ S_j \coloneqq \emptyset $
			\EndFor	
			\For{$ i \in V_i $}
				\State $ (\hat{\S}_{j_1}, \ldots, \hat{\S}_{j_p})
					\leftarrow \textsc{Match}(S_i,
						(\bar{x}_{ij_1}, \ldots, \bar{x}_{ij_p})) $
				\For{$ j \in V_j $}
					\State $ S_j \coloneqq S_j \cup \hat{\S}_j $
				\EndFor	
			\EndFor
			\For{$ V_k \in N(V_j) $}
				\State \textsc{Traverse-Tree}($ V_j $, $ V_k $)
			\EndFor
		\EndFunction
	\end{algorithmic}
	\null\vfill\null
	\centering
	\caption{Routine \textsc{Define-CMPCF-Sets}}
	\label{Fig:CPMCF}
	\null\vfill\null
\end{figure}
The former routine iterates over all individual trees
in the dependency graph~$ \GG $.
In Line~3, it selects an arbitrary node (subset in the partition) $ V_i $
as the root node of the current tree.
Then it fixes an arbitrary ordering of the elements $ v \in V_i $
and places the corresponding sets next to each other
via a call to subroutine \textsc{Match} in Line~5.
Finally, it traverses the tree recursively in Lines~6--8
by calling the routine \textsc{Traverse-Tree},
whose input is a subset~$ V_i $
for which all sets have already been defined,
together with a set~$ V_j $, which is a neighbour of~$ V_i $.
The routine then places all sets for~$ V_j $. 
To do so, it solves a linear feasibility problem in Line~12
which is defined as follows:
the variables $ x_{ij} \in \R_+ $ encode
the measure of the overlap
between the sets~$ S_i $ and~$ S_j $.
These overlaps need to fulfil the set characterizations
\begin{alignat}{1}
	\card{\set{v \in V_i \mid t \in S_v}} &= 1
		\quad \forall V_i \in \VV, \forall t \in U,
		\label{Eq:CMMCF-MC-char1}\\
	\card{\set{v \in C \mid t \in S_v}} &\leq 1
		\quad \forall \text{ stable sets } C \subseteq V, \forall t \in U,
		\label{CPMCF-SS-char2}
\end{alignat}
which leads to the following linear programming system:
\begin{subequations}
\begin{alignat}{1}
	\sum_{j \in V_j} x_{ij} &= h_i \quad \forall i \in V_i,\\
	\sum_{i \in V_i} x_{ij} &= h_j \quad \forall j \in V_j,\\
	x_{ij} &\geq 0 \quad \forall (i,j) \in V_i \times V_j.
\end{alignat}
\label{equ:feasibility_opt_cmpcf}
\end{subequations}
\!\!In Lines~13--22, the routine chooses the sets for all elements in $ V_j $ accordingly.
It then proceeds recursively in Lines~23--25.

It remains to show that problem~\cref{equ:feasibility_opt_cmpcf}
is feasible for all $ h \in H $.
To prove this, we analyse its dual Farkas system,
which is given by
\begin{subequations}
\begin{alignat}{1}
	\sum_{i \in V_i} h_i y_i + \sum_{j \in V_j} h_j y_j &< 0,\\
	y_i + y_j &\geq 0 \quad \forall (i, j) \in V_i \times V_j.
\end{alignat}
\label{equ:feasibility_dual_cmpcf}
\end{subequations}
\!\!We will prove by contradiction that \cref{equ:feasibility_dual_cmpcf} has no solution
in order to show the feasibility of \cref{equ:feasibility_opt_cmpcf}.
To this end, consider some point $ h \in H $
and let $ \y $ be a corresponding solution
of \cref{equ:feasibility_dual_cmpcf}.
We first argue that we can assume $ \y \in \set{-1, 0, 1}^m $ w.l.o.g.
Via rescaling, we can assume that the lowest entry of~$ \y $ is~$-1$. 
Now let $ W \coloneqq \set{k \in V_i \times V_j \mid \y_k < 0} $.
We can assume $ \y_k = 0 $
for all elements in $ (V_i \times V_j) \setminus (W \cup N(W)) $, since this is always feasible if $ \y_k > 0 $ was feasible. 
Further, let $ P \coloneqq (p_1, \ldots, p_q) $
be a sorted list of the elements in $ \set{\y_k \in \R \mid k \in W} $
in decreasing order with $q \coloneqq \abs{P}$.
For $ p \in P $, let $ Q_p \coloneqq \set{k \in W \mid y_k = p} $,
and let $ R \coloneqq N(Q_{p_1}) \setminus N(Q_{p_2})
	\cup \ldots \cup N(Q_{p_q}) $.
Then check if $ \sum_{v \in R} h_v \y_v
	+ \sum_{v \in Q_{p_1}} h_v \y_v \geq 0 $ holds.
If yes, set $ \y_k = 0 $ for $ k \in Q_{p_1} \cup R $.
If no, set $ \y_k = p_2 $ for $ k \in Q_{p_1} $
and $ \y_k = -p_2 $ for $ k \in R $.
Now update $W$ and let $ P \coloneqq (p_1, \ldots, p_{q - 1}) $
again be a sorted list of the elements
in $ \set{\y_k \in \R \mid k \in W} $ in decreasing order.
This procedure lets~$P$ now contain
precisely one element less than before.
Repeat this until there is only one element in~$P$ left,
which has to be~$-1$,
so we can set $ \bar{y}_k = 1 $ for all $ k \in N(W) $.
This way, we have found an integral solution
to~\cref{equ:feasibility_dual_cmpcf}.
We then have 
\begin{alignat*}{1}
	\sum_{k \in N(W)} h_k \bar{y}_k + \sum_{k \in W} h_k \bar{y}_k &< 0,\\
	\sum_{k \in N(W)} h_k - \sum_{k \in W} h_k &< 0,\\
	1 - \sum_{k \in (V_i \times V_j) \setminus N(W)} h_k
		- \sum_{k \in W} h_k &< 0,\\
	\sum_{k \in (V_i \times V_j) \setminus N(W)} h_k
		+ \sum_{k \in W} h_k &> 1.
\end{alignat*}
However, this is impossible,
since the nodes $ ((V_i \times V_j) \setminus N(W)) \cup W $
form a stable set, which leads to a contradiction.
\end{proof}
Via \cref{Cor:Convex_Combinations}, this directly allows us
to represent a point $ h \in H $ as a convex combination of the vertices
of the CMPC polytope, which extends the results from \cite{cycle-free2020}.

This technique could be generalized by passing from linear
to more complex auxiliary problems to determine the placement of the sets.
The core of this proof technique consists in analysing the auxiliary problem
to verify its feasibility for any inputs arising within the algorithmic scheme.

\subsection{The transformation strategy}
\label{Sec:Transformation}

The third proof strategy we present makes use of the fact
that it can be easier to place the sets
for some points within a given polytope
than for others.
Thus, it is sometimes helpful to transform the arbitrary point
to be tested for membership in \cref{Lem:Set_Characterization}
to another, auxiliary point first.
Then, after placing the sets for this auxiliary point,
they are retransformed to represent the original point.
Such a transformation must respect the set characterizations of the vertex set.
We present this technique exemplarily for the convex hull
of all incidence vectors of stable sets in a single odd cycle.

The stable-set polytope of a graph $ G = (V, E) $
is defined as the convex hull of all vectors $ x \in \F^{\card{V}} $
that satisfy
\begin{equation}
	x_i + x_j \leq 1 \quad \forall (i, j) \in E.
	\label{stab:set-inq}
\end{equation}
If~$G$ is a cycle, the \emph{odd-cycle inequality}
\begin{equation}
	\sum_{i \in V} x_i \leq \frac{\card{V} - 1}{2}
\label{ineq:cycle}
\end{equation}
is valid for the corresponding stable-set polytope.
For an odd-cycle, it is sufficient to describe the complete convex hull,
together with inequalities~\cref{stab:set-inq} and the trivial inequalities.
\begin{theorem}
	Let $ G = (V, E) $ be an odd hole,
	let $ P(G) \coloneqq \set{x \in \F^{\card{V}} \mid
		\cref{stab:set-inq,ineq:cycle}} $
	be the stable-set polytope on~$G$,
	and let $ H(G) \coloneqq \set{x \in \FR^{\card{V}} \mid
		\cref{stab:set-inq,ineq:cycle}} $
	be its linear relaxation.
	Then we have $ P = H $.
\end{theorem}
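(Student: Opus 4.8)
The plan is to establish $P(G) \subseteq H(G)$ by inspection and $H(G) \subseteq P(G)$ via \cref{Lem:Set_Characterization}, following the transformation strategy sketched above. For the easy inclusion, every incidence vector of a stable set in the odd hole $G$ satisfies the edge inequalities~\cref{stab:set-inq} by definition of a stable set, the trivial bounds since it is a $0$/$1$-vector, and the odd-cycle inequality~\cref{ineq:cycle} because a stable set in a cycle on $\card{V}$ nodes has at most $\lfloor \card{V}/2 \rfloor = (\card{V}-1)/2$ elements.

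For the reverse inclusion, fix $h \in H(G)$. By \cref{tb:set-char}, the set characterization of an edge inequality $x_i + x_j \leq 1$ is $S_i \cap S_j = \emptyset$, the set characterization of the odd-cycle inequality is $\card{\set{i \in V \mid t \in S_i}} \leq (\card{V}-1)/2$ for all $t \in U$ (note $(\card{V}-1)/2 \in \N$ since $\card{V}$ is odd), and the trivial inequalities need no set characterization of their own. The first observation is that the cardinality characterization is already implied by the edge characterizations: if $S_i \cap S_j = \emptyset$ holds for every $\set{i,j} \in E$, then for each $t \in U$ the active index set $\set{i \in V \mid t \in S_i}$ is a stable set in the cycle $G$ and hence has at most $(\card{V}-1)/2$ elements. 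So, by \cref{Lem:Set_Characterization}, it suffices to find sets $S_i \in \LL$, $i \in V$, with $\mu(S_i) = h_i$ that are pairwise disjoint along the edges of the cycle.

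To construct them, label the nodes along the cycle as $v_1, \ldots, v_n$ with $n \coloneqq \card{V}$. If $\sum_{i \in V} h_i$ happened to be an integer $m$, one could simply lay the sets out consecutively on $U$ viewed as a circle: put $\sigma_0 \coloneqq 0$, $\sigma_\ell \coloneqq \sum_{j=1}^{\ell} h_{v_j}$, and let $S_{v_\ell}$ be the arc of length $h_{v_\ell}$ running from $\sigma_{\ell-1} \bmod 1$ to $\sigma_\ell \bmod 1$ (a half-open subinterval of $U$, or the union of two such when the arc wraps around $0$, hence an element of $\LL$). Then $\mu(S_{v_\ell}) = h_{v_\ell}$, consecutive sets $S_{v_{\ell-1}}$ and $S_{v_\ell}$ abut and are disjoint because their total length $h_{v_{\ell-1}} + h_{v_\ell}$ is at most $1$ by~\cref{stab:set-inq}, and, crucially, the closing edge works out because $\sigma_n = m \in \N$, so $S_{v_n}$ ends exactly at $0 \equiv 1$, i.e.\ $S_{v_n} = [1 - h_{v_n}, 1)$, which is disjoint from $S_{v_1} = [0, h_{v_1})$ again by~\cref{stab:set-inq}. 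In general $\sum_{i \in V} h_i$ is not an integer, and the transformation step replaces $h$ by an auxiliary point $\bar h \in H(G)$ with $\bar h \geq h$ coordinatewise, still satisfying~\cref{stab:set-inq}, and with $\sum_{i \in V} \bar h_i = (n-1)/2 \in \N$. Running the consecutive-arc construction for $\bar h$ yields sets $\bar S_i$ satisfying all set characterizations, and shrinking each $\bar S_i$ to an arbitrary subset $S_i \subseteq \bar S_i$ with $\mu(S_i) = h_i$ retransforms them into sets for $h$: subsets of disjoint sets are disjoint, so the edge characterizations (and with them the cardinality characterization) are preserved, while $\mu(S_i) = h_i$ by choice. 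By \cref{Lem:Set_Characterization} this proves $h \in P(G)$, and \cref{Cor:Convex_Combinations} additionally produces the explicit convex combination of $h$ from incidence vectors of stable sets by measuring the sets $L_\xi(S_1, \ldots, S_n)$.

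The crux -- and the only part that truly uses the odd-cycle inequality for $h$ rather than just for the vertices -- is showing that the auxiliary point $\bar h$ exists, i.e.\ that $h$ can be increased coordinatewise, staying inside the edge polytope, until the coordinate sum reaches $(n-1)/2$. I would prove this by a saturation argument: as long as $\sum_{i} h_i < (n-1)/2$, some coordinate $h_i$ can be increased strictly without violating~\cref{stab:set-inq} or the bounds, since if no coordinate could -- every node being either at value $1$ or incident to a tight edge $h_i + h_j = 1$ -- then, using that $A \coloneqq \set{i \mid h_i = 1}$ is a stable set whose neighbours carry value $0$ and that the remaining nodes decompose into paths covered by tight edges, one computes that $\sum_i h_i$ is forced to equal $(n-1)/2$, a contradiction; increasing such a coordinate maximally creates a new tight edge (or saturates a coordinate at $1$), so the process terminates after finitely many steps at a point $\bar h$ with the required properties. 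Verifying this last combinatorial bookkeeping carefully -- in particular that the path components contribute exactly the right amount to $\sum_i h_i$ once the odd-cycle inequality is imposed -- is where the real work lies; everything else is routine.
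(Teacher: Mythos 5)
Your overall architecture coincides with the paper's transformation strategy: blow $h$ up to an auxiliary point $\bar h \ge h$ that still satisfies the edge inequalities~\cref{stab:set-inq} and has coordinate sum exactly $(\card{V}-1)/2$, place the sets for $\bar h$ consecutively modulo~$1$, then shrink them to measure $h_v$; your observation that the odd-cycle characterization is implied by pairwise disjointness along the edges is a nice simplification. The gap sits exactly where you yourself locate ``the real work'': the saturation lemma you propose is false. Take the odd hole on nine nodes, labelled cyclically, and $h = (1, 0, 0.2, 0.8, 0.2, 0, 1, 0, 0)$. The sums along the nine edges are $1, 0.2, 1, 1, 0.2, 1, 1, 0, 1$, so all edge inequalities and bounds hold, and $\sum_i h_i = 3.2 \le 4 = (\card{V}-1)/2$, so $h \in H(G)$. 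Every node is either at value~$1$ (nodes $1$ and $7$) or incident to a tight edge, so no single coordinate can be strictly increased without violating~\cref{stab:set-inq} or the upper bounds --- yet the coordinate sum is $3.2 \ne 4$. The bookkeeping you sketch breaks precisely on odd tight-edge paths: the path $3,4,5$ with both of its edges tight has sum $2 - h_4$, which is not determined by blockedness, so being stuck does not force the sum up to $(\card{V}-1)/2$.

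Worse, the defect is not only in your proof of the auxiliary-point lemma but in the lemma itself: for this $h$ there exists \emph{no} $\bar h \ge h$ satisfying the edge inequalities with $\sum_i \bar h_i = 4$, because $\bar h_1 = \bar h_7 = 1$ forces $\bar h_2 = \bar h_6 = \bar h_8 = \bar h_9 = 0$, and the tight edges $\set{3,4}$ and $\set{4,5}$ then freeze $\bar h_3 = 0.2$, $\bar h_4 = 0.8$, $\bar h_5 = 0.2$, so $\bar h = h$ is the only candidate. (The same example shows that the paper's own claim that its blow-up phase terminates with $\sum_i \bar h_i = (\card{V}-1)/2$ is not justified as stated, so this is a subtle point rather than a careless slip on your part.) Consequently the transformation step needs a genuinely different argument: for instance, one must show directly that whenever the coordinatewise increase stalls below the odd-cycle bound, the tight structure at the stalled point (value-$1$ nodes forcing empty neighbouring sets, or chains of tight edges) already makes the consecutive placement wrap around consistently --- in the example above it does --- or one must replace the coordinatewise blow-up by a different construction altogether. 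As it stands, your argument does not establish $H(G) \subseteq P(G)$.
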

\begin{proof}
	It is obvious that $ P(G) \subseteq H(G) $.
	For the converse, consider the set characterizations
	of \cref{stab:set-inq,ineq:cycle}, which are given by:
	\begin{alignat}{1}
		S_i \cap \S_j &= \emptyset
			\quad \forall (i, j) \in E,
			\label{stabelset:char2}\\
		\card{\set{i \in B \mid t \in S_i}} &\leq \frac{\card{V} - 1}{2}
			\quad \forall t \in U
			\label{stabelset:char3}
	\end{alignat}
	(\cf \cref{tb:set-char}).
	For a given point $ h \in H(G) $, we then need to find
	sets $ S_v $ for each $ v \in V $
	such that $ \mu(S_v) = h_v $ and the above conditions hold.
	We define these sets in routine \textsc{Define-Odd-Cycle-Stable-Sets-Subsets},
	given in \cref{fig:stable-set-odd-cycle}.
	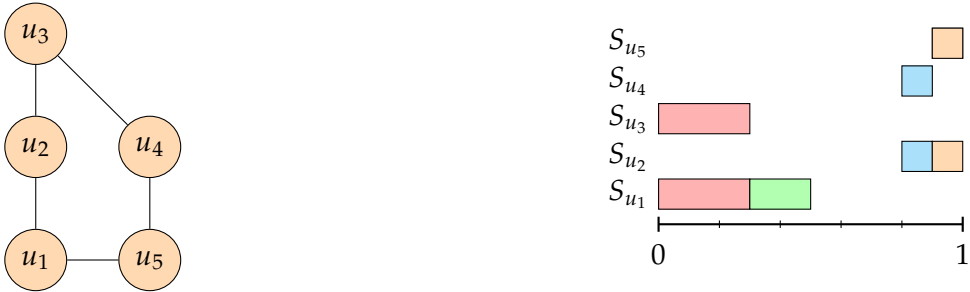
\begin{figure}
		\centering
		\begin{subfigure}{\textwidth}
			\begin{algorithmic}[1]
				\Function{Define-odd-cycle-Stable-Set-Subsets}{}
					\State Let $ (v_1, \ldots, v_{\card{V}}) $ be any ordering
						of the elements in~$V$\\
						\qquad\qquad
						with $ \set{v_i, v_{i + 1}} \in E $
						for all $ i \in [\card{V} - 1] $
					\State $ \bar{h} \leftarrow h $
					\For{each $ v \in (v_1, \ldots v_{\card{V}}) $}
						\Comment{Blow-up phase}
						\State $ r \leftarrow \frac{\card{V} - 1}{2}
							- \sum_{i = 1}^{\card{V}} \bar{h}_i $
						\State $ \bar{h}_v \leftarrow
							\min(1 - \bar{h}_{v - 1},
								1 - \bar{h}_{v + 1},
								\bar{h}_v + r) $
							\Comment{Make~\cref{stab:set-inq}
								or~\cref{ineq:cycle} active}
					\EndFor
					\State $ (\bar{S}_{v_1}, \ldots, \bar{S}_{v_{\card{V}}})
						\leftarrow \textsc{Match}([0, 1),
							(\bar{h}_{v_1}, \ldots, \bar{h}_{v_{\card{V}}}) $
						\Comment{Set auxiliary sets with buffers}
					\For{each $ v \in (v_1, \ldots v_{\card{V}} $)}
						\Comment{Reduction phase}
						\State $ (S_v) \coloneqq \textsc{Match}(\bar{S}_v, (h_v)) $
					\EndFor
				\EndFunction
			\end{algorithmic}
		\end{subfigure}
		\par\vskip\floatsep
		\begin{subfigure}{.5\textwidth}
			\hspace{1cm}
			\begin{tikzpicture}[xscale=1.5,yscale=1.5]
				\node[shape=circle,draw=black, fill=orange!30] (A) at (0,0) {$u_1$};
				\node[shape=circle,draw=black, fill=orange!30] (B) at (0,1) {$u_2$};
				\node[shape=circle,draw=black, fill=orange!30] (C) at (0,2) {$u_3$};
				\node[shape=circle,draw=black, fill=orange!30] (E) at (1,0) {$u_5$};
				\node[shape=circle,draw=black, fill=orange!30] (D) at (1,1) {$u_4$};
				\path [-] (A) edge node[above] {} (B);
				\path [-] (B) edge node[above] {} (C);
				\path [-] (C) edge node[above] {} (D);
				\path [-] (D) edge node[above] {} (E);
				\path [-] (E) edge node[above] {} (A);
			\end{tikzpicture}
		\end{subfigure}
		\hfil
		\begin{subfigure}{.45\textwidth}
			\centering
			\hspace{-1cm}
			\begin{tikzpicture}[xscale=4,yscale=1]
				\draw[thick] (0,0) -- (1,0);
				\draw[thick] (0,.1) -- (0,.-.1);
				\draw (.2,.05) -- (.2,-.05);
				\draw (.4,.05) -- (.4,-.05);
				\draw (.6,.05) -- (.6,-.05);
				\draw (.8,.05) -- (.8,-.05);
				\draw[thick] (1,.1) -- (1,.-.1);
				\node at (0,-.4) {$0$};
				\node at (1,-.4) {$1$};
				
				\newcommand\AONE{.5*0}
				\newcommand\ATWO{.5*1}
				\newcommand\ATHREE{.5*2}
				\newcommand\AFOUR{.5*3}
				\newcommand\AFIVE{.5*4}
				
				\newcommand\BLOCKHEIGHT{0.4}
				\draw[fill=red!30] (0,0.2+\AONE) rectangle ++(0.3,\BLOCKHEIGHT);
				\draw[fill=red!30] (0,0.2+\ATHREE) rectangle ++(0.3,\BLOCKHEIGHT);
				
				\draw[fill=green!30] (0.3,0.2+\AONE) rectangle ++(0.2,\BLOCKHEIGHT);
				
				\draw[fill=cyan!30] (0.8,0.2+\ATWO) rectangle ++(0.1,\BLOCKHEIGHT);		
				\draw[fill=cyan!30] (0.8,0.2+\AFOUR) rectangle ++(0.1,\BLOCKHEIGHT);				
				\draw[fill=orange!30] (0.9,0.2+\AFIVE) rectangle ++(0.1,\BLOCKHEIGHT);
				
				\draw[fill=orange!30] (0.9,0.2+\ATWO) rectangle ++(0.1,\BLOCKHEIGHT);
				\draw[fill=orange!30] (0.9,0.2+\AFIVE) rectangle ++(0.1,\BLOCKHEIGHT);
				
				\node at (-.1,.4+\AONE) {$S_{u_1}$};
				\node at (-.1,.4+\ATWO) {$S_{u_2}$};
				\node at (-.1,.4+\ATHREE) {$S_{u_3}$};
				\node at (-.1,.4+\AFOUR) {$S_{u_4}$};
				\node at (-.1,.4+\AFIVE) {$S_{u_5}$};
			\end{tikzpicture}
		\end{subfigure}
		\caption{Routine \textsc{Define-odd-cycle-Stable-Set-Subsets} (top),
			exemplary graph with five nodes (bottom left)
			and possible output of the routine
			for the point given by $ h = (0.5, 0.2, 0.3, 0.1, 0.1) $.
			It is blown up to $ \bar{h} = (0.8, 0.2, 0.8, 0.1, 0.1) $.
			The point~$h$ can be written as a convex combination
			of incidence vectors belonging to five stable sets,
			namely $ \set{u_1, u_3} $, $ \set{u_1} $, $ \emptyset $,
			$ \set{u_2, u_4} $ and $\set{u_2, u_5} $,
			each non-empty one marked with same colour (bottom right).
		}
		\label{fig:stable-set-odd-cycle}
		\null\vfill\null
	\end{figure}
	First, in Line~2/3, we fix an ordering of the nodes
	which respects the order of the cycle.
	In Lines~4--8, the point~$h$ is then shifted to a point~$ \bar{h} $
	on the boundary of~$ H(G) $ by increasing~$h$ componentwise
	until in each iteration
	at least one of the inequalities~\cref{stab:set-inq,ineq:cycle} becomes active.
	By induction, for the resulting point $ \bar{h} $
	the inequality $ \bar{h} \geq h $ holds component-wise
	and we have $ \sum_{i = 1}^{\card{V}} \bar{h}_i = (\card{V} - 1)/2 $.
	Now, auxiliary sets $ \bar{S}_v $, $ v \in V $,
	are placed in consecutive order along the cycle in Line~9,
	based on the diameters stored in $ \bar{h} $.
	Observe that, in particular, the first set
	is defined as $ \bar{S}_{v_1} = [0, \bar{h}_{v_1}) $
	and the last one as $ \bar{S}_{v_{\card{V}}}
		= [1 - \bar{h}_{v_{\card{V}}}, 1) $,
	thus they satisfy set characterizations~\cref{stabelset:char2,stabelset:char3}.
	Finally, in Lines~10--12, the diameters of the auxiliary sets are reduced
	such that they correspond to the components of~$h$
	to obtain the final sets $ S_v $, $ v \in V $.
	It is obvious that these sets satisfy $ \mu(S_v) = h_v $ for all $ v \in V $,
	and the reduction does not invalidate
	any of the set characterizations~\cref{stabelset:char2}
	or~\cref{stabelset:char3}.
	Therefore, we have proved $ H(G) \subseteq P(G) $.
\end{proof}
In the above proof, an auxiliary point $ \bar{h} \in H $ is constructed
by greedily increasing the coordinates of the point~$h$ to be tested.
The sets for $ \bar{h} $ are then placed next to each other,
modulo $1$ (the diameter of $U$).
The backward transformation then simply shrinks the sets
to fit the size of the original coordinates of~$h$
while maintaining the validity of all set characterizations.
As shown in \cref{fig:stable-set-odd-cycle}, the final sets after backward transformation
are not always placed next to each other due to the gaps arising from the shrinking step.
A direct placement of these sets for the original point seems to more involved,
since it is not obvious how to calculate the gaps between adjacent sets a priori.

\section{Extensions of Zuckerberg's method for general convex sets}
\label{sec:zuckerberg_extended}

Both the original proof technique by Zuckerberg
from \cite{zuckerberg2016geometric}
and its simplification in \cite{gupte2020extended}
are applicable to $0$/$1$-polytopes only.
In the following, we will derive extensions of Zuckerberg's method
which enable us to conduct geometric convex-hull proofs
for arbitrary convex sets.
This includes, in particular, general integer polyhedra.
The underlying idea is to pass from intervals in $ U = [0, 1) $
to rectangles in $ \R^2 $
when constructing the sets to represent a given point~$h$
in some convex set~$H$.
Recall that the original method interprets each of these dimension-many sets
as either a~$0$- or a~$1$-coordinate of a vertex in a $0$/$1$-polytope;
a coordinate of the vertex which belongs to some $ t \in U $ is~$1$
if the corresponding set includes~$t$, and~$0$ otherwise.
The vertices associated with the sets representing a point~$h$ in the polytope
define a convex combination spanning~$h$.
Our extension of Zuckerberg's method gives the intervals
making up these sets a height
to encode the coordinates of arbitrary points in~$H$
instead of only $0$/$1$-points.
This idea will lead to generalized versions
of the theorems in \cref{Sec:Geometric_Proofs}
which can be used to prove the completeness
of convex-hull representations for general convex sets.
Furthermore, they also allow to compute convex combinations
spanning a certain~$ h \in H $ using any points in~$H$,
not necessarily extreme points.

To formalize the new approach, we first define the set 
\begin{equation*}
	\RR^Q \coloneqq \SSet{([a, b), c) \in \mP(Q) \times \R}{a < b,\, c \neq 0}, 
\end{equation*}
where~$Q$ is chosen as either $ U = [0, 1) $ or as $ \R_+ $.
The set~$Q$ specifies the range of coefficients which are allowed
in a linear combination representing some $ h \in H $.
We use $ Q = U $ to construct convex combinations
and $ Q = \R_+ $ for conic combinations.
We interpret $ \RR^Q $ as the set of all non-degenerate,
axis-parallel rectangles~$R$ in~$ \R^2 $,
which are uniquely defined by stating
the two diagonally opposite vertices~$ (a, 0) $ and~$ (b, c) $.
The sign of~$c$ indicates if a rectangle
points into the upper half-space ($ c > 0 $)
or the lower half-space ($ c < 0 $).
Let $ q(R) \coloneqq (b - a) c $ and $ z(R) \coloneqq c $
denote the signed area and the signed height of the rectangle~$R$ respectively.
Further, let $ y\colon Q \times \RR^Q \to \F $ be an indicator function
defined as follows.
For some $ t \in Q $ and $ R \in \RR^Q $
it is $ y(t, R) = 1 $ if $ a \leq t \leq b $, and $ y(t, R) = 0 $ otherwise.
In other words, $y$~indicates whether~$t$ belongs to the support of~$R$,
in which case we call~$R$ \emph{active} at $t$.
We call two rectangles~$ R_1 $ and~$ R_2 $ \emph{non-overlapping}
if there exists no $ t \in Q $
such that both $ y(t, R_1) = 1 $ and $ y(t, R_2) = 1 $ hold.

In a similar fashion as in \cref{Sec:Geometric_Proofs},
we then define $ \LLE{Q} $ as the set of all unions of finitely many
non-degenerate, non-overlapping rectangles from $ \RR^Q $ 
and $ \muE{Q} $ as the Lebesgue measure restricted to $ \LLE{Q} $, that is
\begin{equation*}
		\LLE{Q} \coloneqq \SSet{\set{R_1,\ldots,R_k}}{
		\begin{split}
			&k \in \N \wedge R_1, \ldots, R_k \in \RR^Q \wedge \\
			&R_i \text{ and } R_j \text{ are non-overlapping}
				\quad \forall i, j \in [k], i \neq j
	\end{split}}
\end{equation*}
\begin{equation*}
		\muE{Q}(S) \coloneqq \sum_{i = 1}^k q(R_i)
			\text{ for any } S = \set{R_1, \ldots, R_k} \in \LLE{Q}.
\end{equation*}
Moreover, we define the indicator function
$ \func{\phiE{Q}}{Q \times \LLE{Q}}{\R} $,
\begin{equation*}
	\phiE{Q}(t, S) \coloneqq \begin{cases}
		z(R) & \text{if } y(t, R) = 1
			\text{ for any } R \in \set{R_1, \ldots, R_k},\\
		0 & \text{otherwise},
	\end{cases}
\end{equation*}
where~$S$ is uniquely represented as $ S = \set{R_1,\ldots,R_k} $
for some $ k \in \N $ in $ R_i \in \RR^Q $, $ i \in [k] $.
It returns the height of the rectangle which is active at $ t \in Q $
if there is one.
Note that the active rectangle is unique in this case
as the $ R_i $ forming $S$ are non-overlapping.
Finally, let $ \func{\varphiE{Q}}{Q \times (\LLE{Q})^n}{\R^n},
	\varphiE{Q}(t, S_1, \ldots, S_n) = v $,
where $ v_i \coloneqq \phiE{Q}(t, S_i) $ for $ i \in [n] $.
Here we interpret the heights of the rectangles which are active at~$t$
as the coordinates of a vector in $ \R^n $.

With the above definitions, we are equipped to state
our extensions of Zuckerberg's method.
As a useful shorthand notation used in the proofs,
we define the union $ S_1 \cup S_2 $ of two sets $ S_1, S_2 \in \LLE{Q} $
as the unique $ S \in \LLE{Q} $ such that for all $ t \in Q $
we have $ \phiE{Q}(t, S) = \phiE{Q}(t, S_1) + \phiE{Q}(t, S_2) $.
Informally speaking, this means we add the heights of the rectangles
which are active at a certain~$t$ to form the union~$S$.

We start with an extension which enables us
to conduct geometric convex-hull proofs for general convex sets.
\begin{theorem}[Zuckerberg's method for general convex sets]
	\label{thm:extended_zucker}
	Let $ \FF \subseteq \R^n $ and $ h \in \R^n $.
	Then we have $ h \in \conv(\FF) $
	iff there are sets $ S_1, \ldots, S_n \in \LLE{U} $
	such that $ \muE{U}(S_i) = h_i $ for all $ i \in [n] $
	and $ \varphiE{U}(t, S_1, \ldots, S_n) \in \conv(\FF) $
	for every $ t \in U $.
\end{theorem}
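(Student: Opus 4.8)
The plan is to establish both directions of the equivalence by a reduction to a finite-dimensional situation via Carathéodory's theorem, mirroring the structure of the proof of \cref{thm:gupteC} in \cite{gupte2020extended} but carrying along the extra ``height'' coordinate $c$ of each rectangle.

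For the ``only if'' direction, suppose $h = \sum_{\ell=1}^m \lambda_\ell v^\ell$ with $v^\ell \in \FF$, $\lambda_\ell > 0$ and $\sum_\ell \lambda_\ell = 1$; by Carathéodory we may take $m \leq n+1$, so the sum is finite. I would partition $U = [0,1)$ into consecutive half-open intervals $I_\ell \coloneqq [\sigma_{\ell-1}, \sigma_\ell)$ of length $\lambda_\ell$, where $\sigma_0 \coloneqq 0$ and $\sigma_\ell \coloneqq \lambda_1 + \dots + \lambda_\ell$. For each coordinate $i \in [n]$, set $S_i \coloneqq \{\, R_\ell \mid \ell \in [m],\ v^\ell_i \neq 0 \,\}$ where $R_\ell \coloneqq ([\sigma_{\ell-1}, \sigma_\ell), v^\ell_i)$. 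Discarding the degenerate rectangles with $v^\ell_i = 0$ is exactly what the definition of $\RR^U$ requires; the surviving rectangles sit over disjoint intervals, hence are non-overlapping, so $S_i \in \LLE{U}$. Then $\muE{U}(S_i) = \sum_{\ell : v^\ell_i \neq 0} \lambda_\ell v^\ell_i = \sum_\ell \lambda_\ell v^\ell_i = h_i$, and for $t \in I_\ell$ we get $\phiE{U}(t, S_i) = v^\ell_i$ (including when $v^\ell_i = 0$, since then no rectangle of $S_i$ is active at $t$), so $\varphiE{U}(t, S_1, \dots, S_n) = v^\ell \in \FF \subseteq \conv(\FF)$. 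One subtlety I would spell out: finitely many interval endpoints $\sigma_\ell$ lie in more than one closed rectangle support (since $y$ uses $a \leq t \leq b$), so I should either argue that $\phiE{Q}$ is still well-defined there because the intervals in a single $S_i$ only share endpoints and the convention picks the unique active rectangle consistently, or simply note these boundary points form a measure-zero, finite set and handle them by a harmless tie-break — the cleanest fix is to observe the $R_\ell$ in any one $S_i$ are non-overlapping in the sense defined (no common $t$ with both indicators $1$), which already forces the endpoints to belong to at most one of them.

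For the ``if'' direction, suppose such sets $S_1, \dots, S_n \in \LLE{U}$ are given. Each $S_i$ is a finite union of rectangles over half-open intervals; taking the common refinement of all the interval endpoints across all $S_i$ yields a finite partition $U = \bigsqcup_{\ell=1}^m J_\ell$ into half-open intervals such that on the interior of each $J_\ell$ the vector $\varphiE{U}(t, S_1, \dots, S_n)$ is a constant vector $v^\ell \in \R^n$. By hypothesis $v^\ell \in \conv(\FF)$ for every $t$ in the (nonempty) interior of $J_\ell$. Let $\lambda_\ell \coloneqq \mu(J_\ell) > 0$; then $\sum_\ell \lambda_\ell = 1$ and, integrating the definition of $\muE{U}$ against this partition, $h_i = \muE{U}(S_i) = \sum_{\ell=1}^m \lambda_\ell v^\ell_i$ for each $i$, i.e. $h = \sum_\ell \lambda_\ell v^\ell$ is a convex combination of points $v^\ell \in \conv(\FF)$, hence $h \in \conv(\FF)$ by convexity of $\conv(\FF)$. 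Again the finitely many shared endpoints contribute nothing to the measure and can be ignored.

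The main obstacle — and the place I would be most careful — is the bookkeeping around the indicator function $y$ being defined with \emph{closed} conditions $a \leq t \leq b$ while the intervals themselves are \emph{half-open}, so that rectangle supports in principle share boundary points; I must confirm that this never creates an ambiguity in $\phiE{Q}$ within a single $S_i$ (it does not, precisely by the non-overlapping condition built into $\LLE{Q}$) and that the measure-zero set of such points is irrelevant to the area computation. The second point requiring care is making the ``common refinement'' step rigorous: since each $S_i$ has finitely many rectangles with rational-or-real interval endpoints, the union of all endpoints is a finite set, the induced partition of $U$ is finite, and $\varphiE{U}(\cdot, S_1, \dots, S_n)$ is genuinely constant on each open piece — after that, everything reduces to the finite convex-combination identity and Carathéodory, which are routine. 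Note also that this theorem specializes to \cref{thm:gupteC} when $\FF \subseteq \F^n$ and one restricts the rectangle heights to lie in $\{0,1\}$, so the proof can be presented as the natural lift of the one-dimensional argument.
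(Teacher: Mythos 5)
Your proof is correct and takes essentially the same route as the paper's: the forward direction uses the identical construction (partition $U$ into intervals of lengths $\lambda_\ell$ and attach rectangles whose heights are the corresponding coordinates), and the backward direction extracts a finite convex combination from the piecewise-constant map $\varphiE{U}$, where you group by refinement intervals while the paper groups by level sets of $\varphiE{U}$ — a bookkeeping difference only. Your added remarks on Carath\'eodory and on the closed-versus-half-open endpoint convention in the indicator $y$ are harmless extra care (the paper's own proof silently ignores the latter) and do not change the argument.
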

\begin{proof}
	If $ h \in \conv(\FF) $,
	then there exist $ \xi^1, \ldots, \xi^r \in \conv(\FF) $,
	for some $ r \in \N_+ $, such that~$h$ can be written as
	$ h = \lambda_1 \xi^1 + \ldots + \lambda_r \xi^r $
	with $ \lambda_1 + \ldots + \lambda_r = 1 $
	and $ \lambda_k \geq 0 $ for all $ k \in [r] $.
	We can then define a partition $ U = I_1 \cup \ldots \cup I_r $
	by setting $ I_1 \coloneqq [0, \lambda_1) $
	and $ I_k \coloneqq \left[\lambda_1 + \ldots + \lambda_{k - 1},
		\lambda_1 + \ldots + \lambda_k\right) $
	for $ k \in \set{2, \ldots, r} $.
	This allows us to set
	\begin{equation*}
		S_i \coloneqq \bigcup_{k \in [r]:\, \xi^k_i \neq 0}(I_k, \xi^k_i)
			\in \LLE{U},
			\quad i \in [n],
	\end{equation*}
	with rectangles $ (I_k, \xi^k_i) \in \RR^U $.
	For all $ i \in [n] $, we can conclude
	\begin{equation*}
		\muE{U}(S_i) = \sum_{k \in [r]:\, \xi^k_i \neq 0} \muE{U}((I_k, \xi^k_i))
			= \sum_{k \in [r]:\, \xi^k_i \neq 0} \lambda_k \xi^k_i
			= \sum_{k \in [r]} \lambda_k \xi^k_i = h_i.
	\end{equation*}
	Furthermore, for every $ t \in U $
	there is a unique index~$k$ with $ t \in I_k $,
	and thus we have
	\begin{equation*}
		\varphiE{U}(t, S_1, \ldots, S_n) = \xi^k \in \conv(\FF).
	\end{equation*}
	
	Conversely, if the $ S_i $ are sets with the described properties,
	let $ \xi^1, \ldots, \xi^r $ be an ordering of the elements in
	\begin{equation*}
		\SSet{\xi \in \conv(\FF)}{\varphiE{U}(t, S_1, \ldots, S_n) = \xi
			\text{ for some } t \in U}.
	\end{equation*}
	The above set is finite, since each $ S_i $ is a finite union of rectangles. 
	We can set, by slight abuse of notation,
	\begin{equation*}
		\lambda_k \coloneqq \muE{U}\left(\SSet{(t, 1) \in U \times \set{1}}
			{\varphiE{U}(t, S_1, \ldots, S_n) = \xi^k}\right)
	\end{equation*}
	for $ k \in [r] $ to obtain the required convex representation
	$ h = \lambda_1 \xi^1 + \ldots + \lambda_r \xi^r $.
	To see this, we can easily verify
	\begin{equation*}
		S_i = \bigcup_{k \in [r]:\, \xi^k_i \neq 0}
			\SSet{(t, \xi_i^k) \in U \times \R}
				{\varphiE{U}(t, S_1, \ldots, S_n) = \xi^k}
	\end{equation*}
	for all $ i \in [n] $.
	We then conclude for all $ i \in [n] $:
	\begin{alignat*}{1}
		\sum_{k \in [r]} \lambda_k \xi^k_i
			&= \sum_{k \in [r]} \muE{U} \left(\SSet{(t, 1) \in U \times \set{1}}
				{\varphiE{U}(t, S_1, \ldots, S_n) = \xi^k}\right) \xi^k_i\\
			&= \sum_{k \in [r]:\, \xi^k_i \neq 0}
				\muE{U} \left(\SSet{(t, \xi^k_i) \in U \times \R}
					{\varphiE{U}(t, S_1, \ldots, S_n) = \xi^k}\right)\\
			&= \muE{U}\left(\bigcup_{k \in [r]:\, \xi^k_i \neq 0}
				\SSet{(t, \xi^k_i) \in U \times \R}
					{\varphiE{U}(t, S_1, \ldots, S_n) = \xi^k}\right)\\
			& = \muE{U}(S_i) = h_i.\\[-2\baselineskip]
	\end{alignat*}
	\qed
\end{proof}
\cref{thm:extended_zucker} generalizes \cref{thm:gupteC} in two ways.
The method now works for arbitrary convex sets,
instead of only $0$/$1$-polytopes.
Note that Zuckerberg's original method can be recovered
by discarding the height of the rectangles
and only checking whether a given set is active at some $ t \in U $.
We also remark that in \cref{thm:extended_zucker}
we can now write the point~$h$
as a linear combination of points in $ \conv(\FF) $,
not only points in $ \FF $.
This allows an additional degree of freedom for convex-hull proofs
which \cref{thm:gupteC} does not offer.

Using our extended framework, we can also determine a representation
of any given point $ h \in H $
as a convex combination of points in $ \FF $
if we find corresponding sets $ S_1, \ldots, S_n $
fulfilling the requirements of \cref{thm:extended_zucker}.
To state this result, we define the two sets
\begin{equation*}
	\bar{\FF}(S_1, \ldots, S_n) \coloneqq
		\SSet{\xi \in \conv(\FF)}{\varphiE{U}(t, S_1, \ldots, S_n) = \xi
			\text{ for some } t \in U}
\end{equation*}
and, for each $ \xi \in \R^n $,
\begin{equation*}
	\LE{U}_{\xi}(S_1,\dotsc,S_n) \coloneqq \SSet{t \in U}
		{\varphiE{U}(t, S_1, \ldots, S_n) = \xi}.
\end{equation*}
The following corollary then directly follows
from the proof of \cref{thm:extended_zucker}.
\begin{corollary}[Convex combinations for general convex sets]
	Under the same assumptions as in \cref{thm:extended_zucker},
	let $ \lambda_\xi \coloneqq \muE{U}(\LE{U}_{\xi}(S_1, \ldots, S_n)) $
	for each $ \xi \in \bar{\FF}(S_1, \ldots, S_n) $.
	Then we have $ h = \sum_{\xi \in \bar{\FF}(S_1, \ldots, S_n)} \lambda_\xi \xi $,
	$ \sum_{\xi \in \bar{\FF}} \lambda_\xi = 1 $
	and $ \lambda_\xi \geq 0 $ for all $ \xi \in \bar{\FF}(S_1, \ldots, S_n) $.
\end{corollary}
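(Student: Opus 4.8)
The plan is to read off the weights $\lambda_\xi$ and the identity $h = \sum_\xi \lambda_\xi \xi$ directly from the converse direction of the proof of \cref{thm:extended_zucker}, where exactly these quantities were constructed, and then to supply the two remaining conditions, which are immediate. First I would record the structural fact that makes everything finite: each $S_i$ is a finite union of rectangles from $\RR^U$, so the (finitely many) left and right endpoints of all these rectangles, taken over all $i \in [n]$, cut $U$ into finitely many half-open subintervals, on each of which the vector $\varphiE{U}(t, S_1, \ldots, S_n)$ is constant. Hence $\bar{\FF}(S_1, \ldots, S_n)$ is finite, and for each $\xi$ in this set the support $\LE{U}_\xi(S_1, \ldots, S_n)$ is a finite union of half-open subintervals of $U$, i.e.\ an element of $\LL$; moreover these supports are pairwise disjoint and their union is all of $U$. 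Throughout I would read $\muE{U}(\LE{U}_\xi(S_1, \ldots, S_n))$, as in the proof of \cref{thm:extended_zucker}, as the $\muE{U}$-measure of the lift of this one-dimensional set to rectangles of height~$1$; this equals the ordinary Lebesgue measure $\mu(\LE{U}_\xi(S_1, \ldots, S_n))$, and making this identification explicit is the only slightly delicate bookkeeping step.

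With this in hand the three assertions follow in turn. Nonnegativity $\lambda_\xi \ge 0$ is clear, since $\muE{U}$ of a union of height-$1$ rectangles over the intervals comprising $\LE{U}_\xi$ is just the total length of those intervals. The normalization $\sum_\xi \lambda_\xi = 1$ is finite additivity of $\muE{U}$ applied to the partition $U = \bigcup_\xi \LE{U}_\xi(S_1, \ldots, S_n)$, together with $\muE{U}\big(\{(t, 1) \in U \times \set{1}\}\big) = 1$. For the convex-combination identity I would reuse the computation already carried out in the proof of \cref{thm:extended_zucker}: for each $i \in [n]$ one checks the disjoint decomposition
\[
	S_i = \bigcup_{\xi \in \bar{\FF}(S_1, \ldots, S_n):\, \xi_i \neq 0}
		\SSet{(t, \xi_i) \in U \times \R}{t \in \LE{U}_\xi(S_1, \ldots, S_n)}
\]
in $\LLE{U}$, and then finite additivity of $\muE{U}$ together with $\muE{U}\big(\SSet{(t, \xi_i) \in U \times \R}{t \in \LE{U}_\xi}\big) = \xi_i \, \mu(\LE{U}_\xi) = \lambda_\xi \xi_i$ gives $h_i = \muE{U}(S_i) = \sum_{\xi:\, \xi_i \neq 0} \lambda_\xi \xi_i = \sum_\xi \lambda_\xi \xi_i$ for every $i$, that is $h = \sum_\xi \lambda_\xi \xi$. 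Since $\bar{\FF}(S_1, \ldots, S_n) \subseteq \conv(\FF)$ by definition, this is a genuine convex combination.

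There is no real obstacle here — the corollary is, as remarked, essentially a restatement of one half of the proof of \cref{thm:extended_zucker} — so the only things to be careful about are the consistent use of the height-$1$ convention for $\muE{U}$ on the one-dimensional supports $\LE{U}_\xi$, and the preliminary observation that $\varphiE{U}(t, S_1, \ldots, S_n)$ takes only finitely many values as $t$ ranges over $U$, which is what legitimizes all the finite-additivity arguments.
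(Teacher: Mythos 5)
Your proposal is correct and follows exactly the route the paper intends: the paper gives no separate argument but states that the corollary ``directly follows from the proof of \cref{thm:extended_zucker}'', and you reuse precisely the converse direction of that proof (the decomposition of each $S_i$ over the supports $\LE{U}_{\xi}$ and the height-$1$ measure convention), merely adding the easy explicit checks of finiteness, nonnegativity, and the normalization $\sum_{\xi}\lambda_{\xi}=1$ via the partition of $U$.
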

The definition of a set characterization (\cref{def:set-char})
can now be restated in a more general form as well.
\begin{definition}[Set characterization of a constraint]
	\label{def:set-char2}
	Let $ \func{f}{\FF}{\R} $, let $ b \in \R $,
	and let $ S_1, \ldots S_n \in \LLE{U} $.
	The \emph{set characterization} of some constraint $ f(x) \leq b $
	is the following logic statement:
	\begin{equation*}
		f(\phiE{U}(t, S_1), \ldots, \phiE{U}(t, S_n)) \leq b \text{ holds for all } t \in U.
	\end{equation*}
\end{definition}
Similar to before, we can use this concept to facilitate finding sets $ S_i $
which characterize a point $ h \in H $ according to \cref{thm:extended_zucker}.
\begin{lemma}
	\label{thm:extended_working_thm}
	Let $ \FF \coloneqq \set{x \in \Z^n \mid f_j(x) \leq b_j\, \forall j \in [m]} $
	for some $ m \in \N $.
	Further, let $ P \coloneqq \conv(\FF) $,
	and let~$ H \subseteq \R^n $ be some convex set.
	We have $ H = P $ iff both $ \FF \subseteq H $ holds
	and for each $ h \in H $ there are sets $ S_1, \ldots, S_n \in \LLE{U} $
	with $ \muE{U}(S_i) = h_i $ for all $ i \in [n] $
	which satisfy the set characterization
	for each constraint $ f_j(x) \leq b_j $, $ j \in [m] $.
\end{lemma}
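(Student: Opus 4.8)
The plan is to obtain \cref{thm:extended_working_thm} from \cref{thm:extended_zucker} in exactly the way \cref{Lem:Set_Characterization} is obtained from \cref{thm:gupteC}. The linchpin is a reformulation of the membership test appearing in \cref{thm:extended_zucker}: for fixed $S_1, \ldots, S_n \in \LLE{U}$, the condition ``$\varphiE{U}(t, S_1, \ldots, S_n) \in \FF$ for all $t \in U$'' is equivalent to the simultaneous validity of the set characterizations of all the constraints $f_j(x) \leq b_j$, $j \in [m]$. Indeed, $\varphiE{U}(t, S_1, \ldots, S_n) = (\phiE{U}(t, S_1), \ldots, \phiE{U}(t, S_n))$, and by \cref{def:set-char2} the set characterization of $f_j(x) \leq b_j$ is precisely the statement $f_j(\varphiE{U}(t, S_1, \ldots, S_n)) \leq b_j$ for all $t \in U$; conjoining over $j \in [m]$ reproduces exactly the defining system of $\FF$, evaluated at $\varphiE{U}(t, S_1, \ldots, S_n)$ for every $t$.

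Granting this equivalence, the forward implication is immediate. If $H = P = \conv(\FF)$, then $\FF \subseteq \conv(\FF) = H$, so the first requirement holds. For $h \in H = \conv(\FF)$, write $h = \sum_{k} \lambda_k \xi^k$ as a finite convex combination with all $\xi^k \in \FF$ (Carath\'eodory), partition $U$ into intervals $I_k$ of lengths $\lambda_k$, and set $S_i \coloneqq \bigcup_{k:\, \xi^k_i \neq 0}(I_k, \xi^k_i)$ exactly as in the proof of \cref{thm:extended_zucker}. Then $\muE{U}(S_i) = h_i$ and $\varphiE{U}(t, S_1, \ldots, S_n) = \xi^k \in \FF$ for $t \in I_k$, so by the equivalence these sets satisfy every set characterization.

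For the converse, assume $\FF \subseteq H$ and that sets as described exist for each $h \in H$. Since $H$ is convex, $P = \conv(\FF) \subseteq H$. For the reverse inclusion, fix $h \in H$ and take $S_1, \ldots, S_n \in \LLE{U}$ with $\muE{U}(S_i) = h_i$ satisfying all set characterizations. By the equivalence, $\varphiE{U}(t, S_1, \ldots, S_n) \in \FF \subseteq \conv(\FF)$ for every $t \in U$, so \cref{thm:extended_zucker} yields $h \in \conv(\FF) = P$. Hence $H \subseteq P$, and altogether $H = P$.

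The one place where care is needed — and the only step that is not a verbatim transcription of the binary argument — is the equivalence in the first paragraph. In the $0$/$1$-setting behind \cref{Lem:Set_Characterization}, the values $\phi(t, S_i)$ can only be $0$ or $1$, so ``satisfies all defining constraints of $\FF$'' and ``lies in $\FF$'' already coincide pointwise; in the integer case one must exploit that the $f_j$ are functions defined on the integer lattice (indeed on $\FF$), so that a set characterization of $f_j(x) \leq b_j$ can hold only where the vector $\varphiE{U}(t, \cdot)$ is itself integral. It is this hidden integrality requirement that upgrades the continuous region $\set{x \in \R^n \mid f_j(x) \leq b_j\, \forall j}$ to $\FF$ itself and makes \cref{thm:extended_zucker} applicable. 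Once this is settled, both inclusions fall out of \cref{thm:extended_zucker} together with the convexity of $H$.
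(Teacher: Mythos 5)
Your proof is correct and follows exactly the route the paper intends: \cref{thm:extended_working_thm} is stated there without proof, as the immediate analogue of \cref{Lem:Set_Characterization}, obtained by combining \cref{thm:extended_zucker} with \cref{def:set-char2} in just the way you do (construct the sets from a convex combination of points of $\FF$ for one direction, and feed the pointwise membership $\varphiE{U}(t,S_1,\ldots,S_n)\in\FF$ back into \cref{thm:extended_zucker} together with convexity of $H$ for the other). Your closing remark on the hidden integrality requirement is also well taken: the set characterizations certify membership in $\FF$ only because the $f_j$ are read as functions on $\Z^n$ (indeed on $\FF$, as in \cref{def:set-char2}), forcing $\varphiE{U}(t,S_1,\ldots,S_n)$ to be integral, and without this reading the characterizations taken over $\R^n$ would only place $\varphiE{U}(t,\cdot)$ in the continuous relaxation rather than in $\conv(\FF)$, so the lemma would fail --- a point the paper leaves implicit.
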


Polyhedra, which are special convex sets,
can be written as a convex combination of a finite sets of points
plus a conic combination of a finite set of rays
In the following, we give an alternative version of \cref{thm:extended_zucker}
for polyhedra making use of this fact.
\begin{theorem}[Zuckerberg's method for polyhedra]
	\label{thm:extended_zucker2}
	Let $ \FF \subseteq \R^n $ and $ \EE \subseteq \R^n $
	be a finite, non-empty set of points, $ h \in \R^n $.
	Then we have $ h \in \conv(\FF) + \cone(\EE) $
	iff there are sets $ S_1, \ldots, S_n \in \LLE{U} $
	and sets $ S'_1, \ldots,S'_n \in \LLE{\R_+} $
	such that $ \muE{U}(S_i) + \muE{\R_+}(S'_i) = h_i $ for all $ i \in [n] $,
	$ \varphiE{U}(t, S_1, \ldots, S_n) \in \conv(\FF) $ for all $ t \in U $
	and $ \varphiE{\R_+}(t', S'_1, \ldots, S'_n) \in \cone(\EE) $
	for all $ t' \in \R_+ $.
\end{theorem}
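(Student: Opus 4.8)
The plan is to decompose $h$ into a convex and a conic part, handle the convex part with \cref{thm:extended_zucker}, handle the conic part by an argument that mimics the proof of \cref{thm:extended_zucker} with $U$ replaced by $\R_+$, and glue the two pieces together. Concretely, I would isolate the conic analogue as a short standalone lemma: \emph{if there are $S'_1, \ldots, S'_n \in \LLE{\R_+}$ with $\muE{\R_+}(S'_i) = c_i$ for all $i$ and $\varphiE{\R_+}(t', S'_1, \ldots, S'_n) \in \cone(\EE)$ for all $t' \in \R_+$, then $c \in \cone(\EE)$, and conversely}. Granting this, \cref{thm:extended_zucker2} follows immediately from \cref{thm:extended_zucker} via $h = p + c$ with $p_i = \muE{U}(S_i)$ and $c_i = \muE{\R_+}(S'_i)$.

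For the ``only if'' direction, I would start from a decomposition $h = p + c$ with $p \in \conv(\FF)$ and $c \in \cone(\EE)$. Since $\EE$ is finite, write $c = \sum_k \mu_k \eta^k$ with $\mu_k \geq 0$ and $\eta^k \in \EE$; discard the indices with $\mu_k = 0$ and reindex. Applying \cref{thm:extended_zucker} to $p$ yields sets $S_1, \ldots, S_n \in \LLE{U}$ with $\muE{U}(S_i) = p_i$ and $\varphiE{U}(t, S_1, \ldots, S_n) \in \conv(\FF)$ for all $t \in U$. For the conic part, set $J_k := [\textstyle\sum_{l < k} \mu_l, \sum_{l \leq k} \mu_l) \subseteq \R_+$ and $S'_i := \bigcup_{k:\, \eta^k_i \neq 0}(J_k, \eta^k_i) \in \LLE{\R_+}$; the defining rectangles are non-overlapping because the $J_k$ are pairwise disjoint. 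One then checks $\muE{\R_+}(S'_i) = \sum_k \mu_k \eta^k_i = c_i$, that $\varphiE{\R_+}(t', S'_1, \ldots, S'_n) = \eta^k \in \cone(\EE)$ for $t' \in J_k$, and that $\varphiE{\R_+}(t', S'_1, \ldots, S'_n) = 0 \in \cone(\EE)$ for every $t'$ beyond $\sum_k \mu_k$, where all $S'_i$ are inactive. Adding the two contributions gives $\muE{U}(S_i) + \muE{\R_+}(S'_i) = p_i + c_i = h_i$. (If $c = 0$, the $S'_i$ are just the empty element of $\LLE{\R_+}$, of measure $0$.)

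For the ``if'' direction, put $p_i := \muE{U}(S_i)$ and $c_i := \muE{\R_+}(S'_i)$, so $h = p + c$. The hypotheses on $S_1, \ldots, S_n$ are verbatim those of \cref{thm:extended_zucker}, hence $p \in \conv(\FF)$. For the conic lemma, note that each $S'_i$ is a finite union of bounded rectangles, so $t' \mapsto \varphiE{\R_+}(t', S'_1, \ldots, S'_n)$ is piecewise constant with finitely many pieces and vanishes outside a bounded interval. Let $\eta^1, \ldots, \eta^r$ enumerate its distinct nonzero values (all in $\cone(\EE)$) and let $\mu_k$ be the Lebesgue measure of the bounded set $\{t' \in \R_+ : \varphiE{\R_+}(t', S'_1, \ldots, S'_n) = \eta^k\}$. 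Decomposing each $S'_i$ over these level sets exactly as in the proof of \cref{thm:extended_zucker} gives $c_i = \sum_{k:\, \eta^k_i \neq 0} \mu_k \eta^k_i$ for every $i$, whence $c = \sum_k \mu_k \eta^k \in \cone(\EE)$ since cones are closed under nonnegative combinations. Therefore $h = p + c \in \conv(\FF) + \cone(\EE)$.

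The only genuinely new work beyond \cref{thm:extended_zucker} is the conic analogue, and the one point that needs care is the unboundedness of $\R_+$: a finite union of bounded rectangles has bounded support, so $\varphiE{\R_+}$ is identically zero outside a compact interval, where $0 \in \cone(\EE)$ contributes nothing; this is precisely what keeps the level-set measures $\mu_k$ finite and makes the bookkeeping of the proof of \cref{thm:extended_zucker} go through unchanged. I would therefore state and prove this conic analogue as a short separate lemma (an almost verbatim adaptation of the proof of \cref{thm:extended_zucker} with $U \rightsquigarrow \R_+$, $\conv \rightsquigarrow \cone$, and convex combinations $\rightsquigarrow$ nonnegative combinations), so that the proof of \cref{thm:extended_zucker2} is just the two-line gluing argument above.
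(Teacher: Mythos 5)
Your proof is correct and takes essentially the same route as the paper's: the same interval partitions of $U$ and $\R_+$, the same rectangles of heights given by the coordinates of the spanning points, and the same level-set measures in the converse direction, with the tail of $\R_+$ (where $\varphiE{\R_+}$ vanishes and $0 \in \cone(\EE)$) handled just as in the paper's remark that $\zeta = 0$ there. The only difference is organizational: you invoke \cref{thm:extended_zucker} for the convex part and isolate the conic analogue as a separate lemma, whereas the paper carries out both parts inline in a single proof.
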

\begin{proof}
	If $ h \in \conv(\FF) + \cone(\EE) $,
	then there exist $ \xi^1, \ldots, \xi^r \in \conv(\FF) $
	for some $ r \in \N_+ $ and $ \zeta^1, \ldots, \zeta^q \in \cone(\EE) $
	for some $ q \in \N_+ $ such that~$h$ can be written as
	$ h = \lambda_1 \xi^1 + \ldots + \lambda_r \xi^r
		+ \eta_1 \zeta^1 + \ldots + \eta_q \zeta^q $
	with $ \lambda_1 + \ldots + \lambda_r = 1 $,
	$ \lambda_k \geq 0 $ for all $ k \in[r] $
	and $ \eta_k \geq 0 $ for all $ k \in [q] $.
	Define now the partition $ U = I_1 \cup \ldots \cup I_r $
	by setting $ I_1 \coloneqq [0, \lambda_1) $
	and $ I_k \coloneqq \left[\lambda_1 + \ldots + \lambda_{k - 1},
		\lambda_1 + \ldots + \lambda_k\right) $
	for $ k \in \set{2, \ldots, r} $.
	In addition, we define $ \bar{I}_1 \coloneqq [0, \eta_1) $
	and $ \bar{I}_k \coloneqq \left[\eta_1 + \ldots + \eta_{k - 1},
		\eta_1 + \ldots + \eta_k\right) $
	for $ k \in \set{2, \ldots, q} $.
	With
	\begin{equation*}
		S_i \coloneqq \bigcup_{k \in [r]:\, \xi^k_i \neq 0}(I_k, \xi^k_i),
			\quad S'_i \coloneqq \bigcup_{k \in [q]:\, \zeta^k_i \neq 0}
				(\bar{I}_k, \zeta^k_i)
	\end{equation*}
	for $ i \in [n] $, we find
	\begin{alignat*}{1}
		\muE{U}(S_i) + \muE{\R_+}(S'_i)
			&= \sum_{k \in [r]:\, \xi^k_i \neq 0} \muE{U}((I_k,\xi^k_i))
				+ \sum_{k \in [q]:\, \xi^k_i \neq 0}
					\muE{\R_+}((\bar{I}_k, \zeta^k_i))\\
			&= \sum_{k \in [r]:\, \xi^k_i \neq 0} \lambda_k \xi^k_i
				+ \sum_{k \in [q]:\, \zeta^k_i \neq 0} \eta_k \zeta^k_i
			= \sum_{k \in [r]} \lambda_k \xi^k_i +
				\sum_{k \in [q]} \eta_k \zeta^k_i\\
			&= h_i.
	\end{alignat*}
	Moreover, for each $ t \in U $, there is a unique index~$k$ with $ t \in I_k $,
	and thus
	\begin{equation*}
		\varphiE{U}(t, S_1, \ldots, S_n) = \xi^k \in \conv(\FF).
	\end{equation*}
	Similarly, for each $ t \in \R_+ $,
	there is either a unique index~$k$ with $ t \in \bar{I}_k $,
	or there is no such index.
	Thus, we conclude
	\begin{equation*}
		\varphiE{\R_+}(t, S'_1, \ldots, S'_n) = \zeta \in \cone(\EE).
	\end{equation*}
	Especially, if there is no index~$k$ as outline above,
	we have $ \zeta = 0 $.
	
	Conversely, if~$ S_i $ and~$ S'_i $ are sets with the stated properties, 
	let $ \xi^1, \ldots \xi^r $ be an ordering of the elements in
	\begin{equation*}
		\SSet{\xi \in \conv(\FF)}
			{\varphiE{U}(t, S_1, \ldots, S_n) = \xi
				\text{ for some } t \in U},
	\end{equation*}
	and let $ \zeta_1, \ldots \zeta_q $ be an ordering of the elements in
	\begin{equation*}
		\SSet{\zeta \in \cone(\EE)}
			{\varphiE{\R_+}(t, S'_1, \ldots, S'_n) = \zeta
				\text{ for some } t \in \R_+}.
	\end{equation*}
	Both sets are finite, since all $ S_i $ and $ S'_i $
	are finite unions of rectangles.  
	For $ k \in [r] $ and $ p \in [q] $, we can set
	\begin{alignat*}{1}
		\lambda_k &\coloneqq \muE{U} \left(\SSet{(t, 1) \in U \times \set{1}}
			{\varphiE{U}(t, S_1, \ldots, S_n) = \xi^k}\right),\\
		\eta_p &\coloneqq \muE{\R_+} \left(\SSet{(t, 1) \in \R_+ \times \set{1}}
			{\varphiE{\R_+}(t, S'_1, \ldots, S'_n) = \zeta^p}\right)
	\end{alignat*}
	to obtain the required convex representation
	$ h = \lambda_1 \xi^1 + \ldots + \lambda_r \xi^r
		+ \eta_1 \zeta^1 + \ldots + \eta_q \zeta^q $. 
	To this end, observe that for all $i \in [n] $ we have
	\begin{alignat*}{1}
		S_i &= \bigcup_{k \in [r]:\, \xi^k_i \neq 0}
			\SSet{(t, \xi_i^k) \in U \times \R}
				{\varphiE{U}(t, S_1, \ldots, S_n) = \xi^k},\\
		S'_i &= \bigcup_{k \in [q]:\, \zeta^k_i \neq 0}
			\SSet{(t, \xi_i^k) \in \R_+ \times \R}
				{\varphiE{\R_+}(t, S'_1, \ldots, S'_n) = \zeta^k}.
	\end{alignat*}
	For all $ i \in [n]  $, this leads to
	\begin{alignat*}{1}
		\sum_{k \in [r]} \lambda_k \xi^k_i
			&= \sum_{k \in [r]} \muE{U}
				\left(\SSet{(t, 1) \in U \times \set{1}}
					{\varphiE{U}(t, S_1, \ldots, S_n) = \xi^k}\right) \xi^k_i\\
			&= \sum_{k \in [r]:\, \xi^k_i \neq 0} \muE{U}
				\left(\SSet{(t, \xi^k_i) \in U \times \R}
					{\varphiE{U}(t, S_1, \ldots, S_n) = \xi^k}\right)\\
			&= \muE{U} \left(\bigcup_{k \in [r]:\, \xi^k_i \neq 0}
				\SSet{(t, \xi^k_i) \in U \times \R}
					{\varphiE{U}(t, S_1, \ldots, S_n) = \xi^k}\right)\\
			&= \muE{U}(S_i)
	\end{alignat*}
	and
	\begin{alignat*}{1}
		\sum_{k \in [q]} \eta_k \zeta^k_i
			&= \sum_{k \in [q]} \muE{\R_+}
				\left(\SSet{(t, 1) \in \R_+ \times \set{1}}
					{\varphiE{\R_+}(t, S'_1, \ldots, S'_n)
						= \zeta^k}\right) \zeta^k_i\\
			&= \sum_{k \in [q]:\, \zeta^k_i \neq 0} \muE{\R_+}
				\left(\SSet{(t, \zeta^k_i) \in \R_+ \times \R}
					{\varphiE{\R_+}(t, S'_1, \ldots, S'_n) = \zeta^k}\right)\\
			&= \muE{\R_+} \left(\bigcup_{k \in [p]:\, \zeta^k_i \neq 0}
				\SSet{(t, \zeta^k_i) \in \R_+ \times \R}
					{\varphiE{\R_+}(t, S'_1, \ldots, S'_n) = \zeta^k}\right)\\
			&= \muE{\R_+}(S'_i).
	\end{alignat*}
	This yields
	\begin{align*}
		\sum_{k = 1}^r \lambda_k \xi^k_i + \sum_{k = 1}^q \eta_k \zeta^k_i
			= \muE{U}(S_i) + \muE{\R_+}(S'_i) = h_i.\\[-2\baselineskip]
	\end{align*}
	\qed
\end{proof}
When we conduct a convex-hull proof via \cref{thm:extended_zucker},
we implicitly write the given point~$h$
as a convex combination of other points in~$H$
(most often extreme points).
In contrast, \cref{thm:extended_zucker2} allows us to express~$h$
as both a convex and conic combination of points spanning~$H$.
This is especially interesting for polyhedra,
which can be split into a convex and a conic part.
Both versions are valuable tools and allow for different proof strategies
as the example in \cref{simplex:unb} shows.

If we succeed in giving a convex-hull proof hull via \cref{thm:extended_zucker2},
we can again deduce convex and conic combinations afterwards.
\begin{corollary}[Convex combinations for polyhedra]
	Under the same assumptions as in \cref{thm:extended_zucker2},
	let $ \lambda_\xi \coloneqq \muE{U}(\LE{U}_{\xi}(S_1, \ldots, S_n)) $
	for all $ \xi \in \FF $
	and $ \eta_\zeta \coloneqq \muE{\R_+}(\LE{\R_+}_{\zeta}(S'_1, \ldots, S'_n)) $
	for all $ \zeta \in \EE $.
	Then we have $ h = \sum_{\xi \in \FF} \lambda_\xi \xi
		+ \sum_{\zeta \in \EE} \eta_\zeta \zeta $,
	$ \sum_{\xi \in \FF} \lambda_\xi = 1 $
	with $ \lambda_\xi \geq 0 $ for all $ \xi \in \FF $
	and $ \eta_\zeta \geq 0 $ for all $ \zeta \in \EE $.
\end{corollary}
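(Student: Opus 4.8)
The plan is to read the statement off the converse direction of the proof of \cref{thm:extended_zucker2}, which already performs all the required computations; essentially only the indexing of the coefficients needs to be matched up. Recall that in that proof one enumerates the finitely many distinct values $\xi^1,\dotsc,\xi^r$ attained by the map $t\mapsto\varphiE{U}(t,S_1,\dotsc,S_n)$ on $U$ and the finitely many distinct values $\zeta^1,\dotsc,\zeta^q$ attained by $t'\mapsto\varphiE{\R_+}(t',S'_1,\dotsc,S'_n)$ on $\R_+$, and defines $\lambda_k\coloneqq\muE{U}(\{(t,1)\in U\times\{1\}:\varphiE{U}(t,S_1,\dotsc,S_n)=\xi^k\})$ and $\eta_p\coloneqq\muE{\R_+}(\{(t',1)\in\R_+\times\{1\}:\varphiE{\R_+}(t',S'_1,\dotsc,S'_n)=\zeta^p\})$. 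Under the same abuse of notation, in which $\muE{U}$ applied to a subset $A\subseteq U$ is read as $\muE{U}$ of the union of unit-height rectangles over $A$ (equivalently, the Lebesgue measure of $A$), these are exactly the quantities $\lambda_\xi$ and $\eta_\zeta$ of the statement for $\xi=\xi^k$ and $\zeta=\zeta^p$; for every other $\xi\in\FF$ the set $\LE{U}_\xi(S_1,\dotsc,S_n)$ is empty, and likewise $\LE{\R_+}_\zeta(S'_1,\dotsc,S'_n)$ for every other $\zeta\in\EE$, so the remaining coefficients vanish.

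With this identification the argument runs as follows. First, writing each $S_i$ as the pairwise non-overlapping (hence additive under $\muE{U}$) union $\bigcup_{\xi:\,\xi_i\neq 0}\{(t,\xi_i):\varphiE{U}(t,S_1,\dotsc,S_n)=\xi\}$, and $S'_i$ analogously, one obtains the coordinatewise identities $\muE{U}(S_i)=\sum_{\xi\in\FF}\lambda_\xi\xi_i$ and $\muE{\R_+}(S'_i)=\sum_{\zeta\in\EE}\eta_\zeta\zeta_i$ for every $i\in[n]$, exactly as in the proof of \cref{thm:extended_zucker2}. Adding the two and using the hypothesis $\muE{U}(S_i)+\muE{\R_+}(S'_i)=h_i$ gives $h=\sum_{\xi\in\FF}\lambda_\xi\xi+\sum_{\zeta\in\EE}\eta_\zeta\zeta$. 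Next, since the sets $\LE{U}_\xi(S_1,\dotsc,S_n)$, $\xi\in\FF$, partition $U=[0,1)$, their Lebesgue measures sum to $1$, \ie $\sum_{\xi\in\FF}\lambda_\xi=1$. Finally, $\lambda_\xi\geq 0$ and $\eta_\zeta\geq 0$ hold because they are Lebesgue measures of subsets of $U$ and of $\R_+$ respectively.

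Since this is essentially a transcription, I do not expect a real obstacle; the one point to watch is that the sums in the statement range over $\FF$ and $\EE$, whereas \cref{thm:extended_zucker2} only guarantees $\varphiE{U}(t,S_1,\dotsc,S_n)\in\conv(\FF)$ for $t\in U$ and $\varphiE{\R_+}(t',S'_1,\dotsc,S'_n)\in\cone(\EE)$ for $t'\in\R_+$. The form stated here is the one used in practice, where the sets $S_i,S'_i$ are constructed during a convex-hull proof so that the attained values $\xi^k$ in fact lie in $\FF$ and the $\zeta^p$ in $\EE$; in full generality one would instead sum over the image set $\bar{\FF}(S_1,\dotsc,S_n)$ and its conic counterpart, exactly mirroring the corollary accompanying \cref{thm:extended_zucker}. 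Together with the forward direction of \cref{thm:extended_zucker2}, which produces such sets whenever $h\in\conv(\FF)+\cone(\EE)$, this shows that the explicit convex-plus-conic representation is always available once membership has been certified.
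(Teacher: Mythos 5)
Your proposal is correct and follows essentially the same route as the paper, which offers no separate argument for this corollary but treats it as an immediate read-off of the converse direction of the proof of \cref{thm:extended_zucker2}, exactly the transcription you carry out (coordinatewise identities $\muE{U}(S_i)=\sum_\xi\lambda_\xi\xi_i$, $\muE{\R_+}(S'_i)=\sum_\zeta\eta_\zeta\zeta_i$, the partition of $U$ giving $\sum_\xi\lambda_\xi=1$, and nonnegativity from the measures). Your side remark about the sums formally ranging over $\FF$ and $\EE$ rather than over the attained image values in $\conv(\FF)$ and $\cone(\EE)$ correctly identifies the same slight imprecision the paper resolves in the analogous corollary to \cref{thm:extended_zucker} by summing over $\bar{\FF}(S_1,\ldots,S_n)$, so no gap remains.
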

It is straightforward to adjust the definition of set characterizations
from \cref{def:set-char2} to include the conic part as well.
We will, however, skip this for reasons of space.

To define the sets in a convex-hull proof according to \cref{thm:extended_zucker}
or \cref{thm:extended_zucker2}, it will be helpful
to introduce the auxiliary function $ o\colon Q \times U \to \LL \times U $,
\begin{equation*}
	o(t, a) \coloneqq \begin{cases}
		([t, t + a), t + a) & \text{if } t + a \leq 1,\\
		([t, 1) \cup [0, t + a - 1), t + a - 1) & \text{otherwise}.
		\end{cases}
\end{equation*}
This function determines an interval starting at $ t \in Q $
and of diameter~$ a \in U $, modulo~$1$.
It returns an ordered pair consisting of the interval and its end point,
which will be useful when placing rectangles adjacent to each other.

We will now give some indicative first examples to illustrate
how the results derived in this section can be used to give convex-hull proofs.

\subsection{Convex-hull proofs using interior points}

We start with the example of a simplex
to show that the point $ h \in H $ does not necessarily have to be written
as a convex combination of vertices, but that it is also possible
to characterize it via sets corresponding to other points in the interior.
Let $ \FF \coloneqq \set{x \in \Z^n \mid \sum_{i = 1}^n x_i \leq b,\, x \geq 0} $
and $ H \coloneqq \set{x \in \R^n \mid \sum_{i = 1}^n x_i \leq b,\, x \geq 0} $
with some $ b \in \N $.
The set characterizations for the simplex constraint
and the non-negativity constraint can be stated as
\begin{alignat}{1}
	\sum_{i = 1}^n \phiE{U}(t, S_i) &\leq b \quad \forall t \in U,\label{char:simplex1}\\
	\phiE{U}(t, S_i) &\geq 0 \quad \forall i \in [n],\, \forall t \in U\label{char:simplex2}
\end{alignat}
respectively.
A possible construction of the sets~$ S_i $ for \cref{thm:extended_zucker2}
is given in routine \textsc{Define-Simplex-Subsets-A},
and a different variant is given in routine \textsc{Define-Simplex-Subsets-B},
both stated in \cref{fig:simplex}.
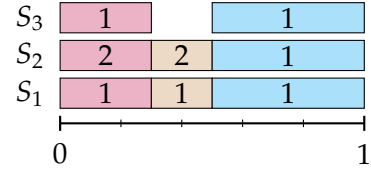
\begin{figure}[h]
	\centering
	\begin{subfigure}{.5\textwidth}
		\begin{algorithmic}[1]
			\Function{Define-Simplex-Subsets-A}{}
			\State $ r \leftarrow 0 $
			\For{$ i \in [n] $}
				\State $ S_i \coloneqq ([r, r + h_i / b), b) $
				\State $ r \leftarrow r + h_i / b $
			\EndFor
			\EndFunction
		\end{algorithmic}
	\end{subfigure}
	\hfil
	\begin{subfigure}{.45\textwidth}
		\centering
		\begin{tikzpicture}[xscale=4,yscale=1]
			\draw[thick] (0,0) -- (1,0);
			\draw[thick] (0,.1) -- (0,.-.1);
			\draw (.2,.05) -- (.2,-.05);
			\draw (.4,.05) -- (.4,-.05);
			\draw (.6,.05) -- (.6,-.05);
			\draw (.8,.05) -- (.8,-.05);
			\draw[thick] (1,.1) -- (1,.-.1);
			\node at (0,-.4) {$0$};
			\node at (1,-.4) {$1$};
			\draw[fill=red!30] (0,0.2) rectangle ++(0.25,0.4) node[pos=.5] {$4$};
			\draw[fill=blue!30] (0.25,0.2+.5*1) rectangle ++(0.375,0.4) node[pos=.5] {$4$};
			\draw[fill=green!30] (0.625,0.2+.5*2) rectangle ++(0.2,0.4) node[pos=.5] {$4$};
			
			\node at (-.1,.4+.5*0) {$\S_1$};
			\node at (-.1,.4+.5*1) {$\S_2$};
			\node at (-.1,.4+.5*2) {$\S_3$};
		\end{tikzpicture}
	\end{subfigure}
	\par\vskip\floatsep
	\begin{subfigure}{.5\textwidth}
		\begin{algorithmic}[1]
			\Function{Define-Simplex-Subsets-B}{}
				\State $ r \leftarrow 0 $
				\For{$ i \in [n] $}
					\State $ (I, r) \leftarrow o(r, h_i - \myfloor{h_i}) $
					\State $ S_i \coloneqq ([0, 1), \myfloor{h_i}) \cup (I, 1)) $
				\EndFor
			\EndFunction
		\end{algorithmic}
	\end{subfigure}
	\hfil
	\begin{subfigure}{.45\textwidth}
		\centering
		\begin{tikzpicture}[xscale=4,yscale=1]
			\draw[thick] (0,0) -- (1,0);
			\draw[thick] (0,.1) -- (0,.-.1);
			\draw (.2,.05) -- (.2,-.05);
			\draw (.4,.05) -- (.4,-.05);
			\draw (.6,.05) -- (.6,-.05);
			\draw (.8,.05) -- (.8,-.05);
			\draw[thick] (1,.1) -- (1,.-.1);
			\node at (0,-.4) {$0$};
			\node at (1,-.4) {$1$};
			\draw[fill=purple!30] (0,0.2) rectangle ++(0.3,0.4) node[pos=.5] {$1$};
			
			\draw[fill=purple!30] (0,0.2+.5*1) rectangle ++(0.3,0.4) node[pos=.5] {$2$};
			
			\draw[fill=purple!30] (0,0.2+.5*2) rectangle ++(0.3,0.4) node[pos=.5] {$1$};
			
			\draw[fill=brown!30] (0.3,0.2+.5*0) rectangle ++(0.2,0.4) node[pos=.5] {$1$};
			\draw[fill=brown!30] (0.3,0.2+.5*1) rectangle ++(0.2,0.4) node[pos=.5] {$2$};
			
			\draw[fill=cyan!30] (0.5,0.2+.5*0) rectangle ++(0.5,0.4) node[pos=.5] {$1$};
			\draw[fill=cyan!30] (0.5,0.2+.5*1) rectangle ++(0.5,0.4) node[pos=.5] {$1$};
			\draw[fill=cyan!30] (0.5,0.2+.5*2) rectangle ++(0.5,0.4) node[pos=.5] {$1$};
			
			\node at (-.1,.4+.5*0) {$\S_1$};
			\node at (-.1,.4+.5*1) {$\S_2$};
			\node at (-.1,.4+.5*2) {$\S_3$};
		\end{tikzpicture}
	\end{subfigure} 
	\caption{Routines \textsc{Define-Simplex-Subsets-A} (top left)
		and \textsc{Define-Simplex-Subsets-B} (bottom left),
		exemplary constructions for the 3-dimensional simplex~$H$
		with right-hand side~$ b = 4$ for the point~$h$
		with $ (h_1, h_2, h_3) = (1, 1.5, 0.8) $
		for \textsc{Define-Box-Subsets-A} (top right)
		and \textsc{Define-Box-Subsets-B} (bottom right).
		Via \textsc{Define-Box-Subsets-A}, we obtain
		$ h = 0.25 (4, 0, 0) + 0.375 (0, 4, 0) + 0.2 (0, 0, 4) + 0.175 (0, 0, 0) $
		while \textsc{Define-Box-Subsets-B} yields
		$ h = 0.3 (1, 2, 1) + 0.2 (1, 2, 0) + 0.5 (1, 1, 1) $.
		Those parts of the sets which belong to the same vertex
		are marked with the same colour;
		the numbers represent the height of each rectangle,
		\ie the coordinates of the vertices.
	}
	\label{fig:simplex}
	\null\vfill\null
\end{figure}
Via the first variant, the point $ h \in H $ is always written
as a convex combination of vertices of~$H$,
while in the second one the point may also be represented
using integral points inside the polytope.
By construction, both routines return sets~$ S_i $
with $ \muE{U}(S_i) = h_i $ for all $ i \in [n] $.
The inequalities defining~$H$ ensure that the combined width of the rectangles
fits into~$U$, and thus the requirements of \cref{thm:extended_zucker2}
are fulfilled in both cases.
This yields two different proofs for $ H = \conv(\FF) $
and shows the additional flexibility \cref{thm:extended_zucker2} offers.

\subsection{Convex-hull proofs for unbounded polyhedra}
\label{simplex:unb}

We continue with a modification of the previous simplex example,
where we demonstrate the difference it makes
to apply either \cref{thm:extended_zucker} or \cref{thm:extended_zucker2}
when showing integrality of an unbounded polyhedron.
Let $ \FF \coloneqq \set{x \in \Z^n \mid \sum_{i = 1}^n x_i \geq b,\, x \geq 0} $
and $ H \coloneqq \set{x \in \R^n \mid \sum_{i = 1}^n x_i \geq b,\, x \geq 0} $
with some $ b \in \N $.
The set characterizations for the two constraints defining~$ \FF $ and~$H$
can be stated as
\begin{alignat}{1}
	\sum_{i = 1}^n \phiE{U}(t, S_i) &\geq b \quad \forall t \in U,\label{char:simplex+cone1}\\
	\phiE{U}(t, S_i) &\geq 0 \quad \forall i \in [n], \forall t \in U.\label{char:simplex+cone2}
\end{alignat}
We can reuse routine \textsc{Define-Simplex-Subsets-B}
from \cref{fig:simplex} to construct adequate sets~$ S_i $
for \cref{thm:extended_zucker},
which proves the equivalence of~$ \conv(\FF) $ and~$H$.
An alternative representation of $ \FF$
is given by $ \FF = b\conv(e_1, \ldots, e_n) + \cone(e_1, \ldots, e_n) $.
Using the construction provided by routine \textsc{Define-Conv-Cone-Subsets}
in \cref{fig:simplex+cone},
we can invoke \cref{thm:extended_zucker2} and thus prove $ H = \conv(\FF) $
in an alternative fashion.
\begin{figure}[h]
	\centering
	\begin{subfigure}{\textwidth}
		\begin{algorithmic}[1]
			\Function{Define-Conv-Cone-Subsets}{}
				\State $ g \leftarrow h / \norm{h}_1 $
				\State $ (S_1, \ldots, S_n) \coloneqq
					\textsc{Define-Simplex-Subsets-B}
					\text{ applied to } g $
				\State $ v \leftarrow h - g $
				\For{$ i \in [n] $}
					\State $ S'_i \coloneqq ([\sum_{j = 1}^{i - 1} v_j,
						\sum_{j = 1}^i v_j), 1) $
				\EndFor
			\EndFunction
		\end{algorithmic}
	\end{subfigure}
	\par\vskip\floatsep
	\begin{subfigure}{.5\textwidth}
		\begin{tikzpicture}[xscale=4,yscale=1]
			\draw[thick] (0,0) -- (1,0);
			\draw[thick] (0,.1) -- (0,.-.1);
			\draw (.2,.05) -- (.2,-.05);
			\draw (.4,.05) -- (.4,-.05);
			\draw (.6,.05) -- (.6,-.05);
			\draw (.8,.05) -- (.8,-.05);
			\draw[thick] (1,.1) -- (1,.-.1);
			\node at (0,-.4) {$0$};
			\node at (1,-.4) {$1$};
			\draw[fill=blue!30] (0,0.2) rectangle ++(0.3,0.4) node[pos=.5] {$1$};
			
			\draw[fill=green!30] (0.3,0.2+.5*1) rectangle ++(0.45,0.4) node[pos=.5] {$1$};
			
			\draw[fill=red!30] (0.3+0.45,0.2+.5*2) rectangle ++(0.24,0.4) node[pos=.5] {$1$};
			
			\node at (-.1,.4+.5*0) {$\S_1$};
			\node at (-.1,.4+.5*1) {$\S_2$};
			\node at (-.1,.4+.5*2) {$\S_3$};
		\end{tikzpicture}
	\end{subfigure}
	\hfil
	\begin{subfigure}{.45\textwidth}
		\centering
		\begin{tikzpicture}[xscale=4,yscale=1]
			\draw[thick] (0,0) -- (1,0);
			\draw[thick] (0,.1) -- (0,.-.1);
			\draw (.2,.05) -- (.2,-.05);
			\draw (.4,.05) -- (.4,-.05);
			\draw (.6,.05) -- (.6,-.05);
			\draw (.8,.05) -- (.8,-.05);
			\draw[thick] (1,.1) -- (1,.-.1);
			\node at (0,-.4) {$0$};
			\node at (1,-.4) {$3$};
			
			\draw[fill=brown!30] (0,0.2) rectangle ++(0.3*0.7,0.4) node[pos=.5] {$1$};
			
			\draw[fill=purple!30] (0.3*0.7,0.2+.5*1) rectangle ++(0.3*1.05,0.4) node[pos=.5] {$1$};
			
			\draw[fill=cyan!30] (0.3*0.7+0.3*1.05,0.2+.5*2) rectangle ++(0.3*0.56,0.4) node[pos=.5] {$1$};
			
			\node at (-.1,.4+.5*0) {$\S'_1$};
			\node at (-.1,.4+.5*1) {$\S'_2$};
			\node at (-.1,.4+.5*2) {$\S'_3$};
		\end{tikzpicture}
	\end{subfigure}
	\caption{Routine \textsc{Define-Simplex+Cone-Subsets} (top),
		exemplary construction for the 3-dimensional polyhedron~$H$
		with right-hand side~$ b = 1$
		for the point~$h$ with $ (h_1, h_2, h_3) = (1, 1.5, 0.8) $
		for \textsc{Define-Conv-Cone-Subsets} (bottom).
		The latter decomposes~$h$ into $ h = g + v $,
		where $ g \approx 0.3 (1, 0, 0) + 0.45 (0, 1, 0) + 0.24 (0, 0, 1) $
		and $ v \approx 0.7 (1, 0, 0) + 1.05 (0, 1, 0) + 0.56 (0, 0, 1) $.
		The vector~$g$ is represented by a convex combination of the vertices
		of the polytopal part of~$H$, and $v$ as a conic combination of its rays.
		Those parts of the sets which belong to the same vertex or ray
		are marked with the same colour;
		the numbers represent their coordinates.}
	\label{fig:simplex+cone}
	\null\vfill\null
\end{figure}
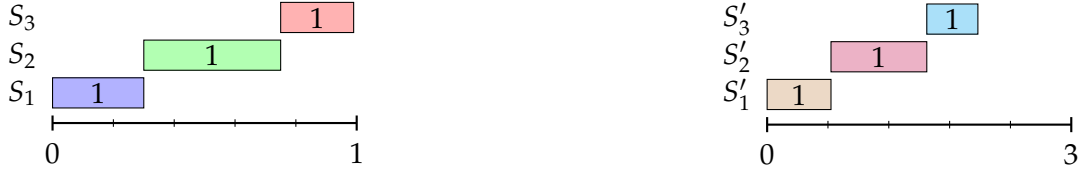
We can use both methods in order to prove the same statement.
However, we obtain different linear combinations
representing a given point~$h$.
\cref{thm:extended_zucker} gives us a convex combination
of arbitrary points in~$H$.
In contrast, \cref{thm:extended_zucker2} returns two sets of points,
vertices and rays, such that a convex combination of the vertices
plus a conic combination of the rays yields~$h$.
Depending on the problem at hand,
both strategies might be the one which is best suited
for a convex-hull proof.

\subsection{Convex-hull proofs for non-linear convex sets}

Finally, we show that our new criteria for convex-hull proofs
can also be used with non-polyhedral convex sets.
To do so, we use the example of the unit-ball in $ \R^2 $.
Let $ \FF \coloneqq \set{x \in \R^2 \mid (x_1 - 1)^2 + (x_2 - 1)^2 = 1} $
and $ H \coloneqq \set{x \in \R^2 \mid (x_1 - 1)^2 + (x_2 - 1)^2 \leq 1} $.
The set characterization for the quadratic constraint defining~$ \FF $ is given by
\begin{alignat}{1}
	(\phiE{U}(t, S_1) - 1)^2 + (\phiE{U}(t, S_2) - 1)^2 &= 1 \quad \forall t \in U. \label{char:circle}
\end{alignat}
Note that, unlike what Zuckerberg's original method allows, the set~$ \FF $
is not only infinite, as in the previous example, but even uncountable.

A set construction fulfilling the prerequisites of \cref{thm:extended_zucker}
is given by routine \textsc{Define-Ball-Subsets} in \cref{fig:circle}.
\begin{figure}[h]
	\centering
	\begin{subfigure}{.5\textwidth}
		\begin{algorithmic}[1]
			\Function{Define-Ball-Subsets}{}
				\State $ V \leftarrow \set{v \in \R \mid
					(v - 1)^2 + (h_2 - 1)^2 = 1} $
				\If{$ V = \set{v^1} $}
					\State $ S_1 \coloneqq ([0, 1), v^1]) $
					\State $ S_2 \coloneqq ([0, 1), h_2]) $
				\EndIf
				\If{$ V = \set{v^1, v^2} $, $ v^1 \neq v^2 $}
					\State $ \set{\lambda} \leftarrow
						\set{\lambda \mid \lambda v^1 + (1 -\lambda) v^2 = 1} $
					\State $ S_1 \coloneqq ([0, \lambda), v^1])
						\cup ([\lambda, 1), v^2]) $
					\State $ S_2 \coloneqq ([0, 1), h_2]) $
				\EndIf
			\EndFunction
		\end{algorithmic}
	\end{subfigure}
	\hfil
	\begin{subfigure}{.45\textwidth}
		\begin{tikzpicture}[xscale=4,yscale=1]
			\draw[thick] (0,0) -- (1,0);
			\draw[thick] (0,.1) -- (0,.-.1);
			\draw (.2,.05) -- (.2,-.05);
			\draw (.4,.05) -- (.4,-.05);
			\draw (.6,.05) -- (.6,-.05);
			\draw (.8,.05) -- (.8,-.05);
			\draw[thick] (1,.1) -- (1,.-.1);
			\node at (0,-.4) {$0$};
			\node at (1,-.4) {$1$};
			\draw[fill=orange!30] (0,0.2) rectangle ++(0.39,0.4) node[pos=.5] {$0.56$};
			\draw[fill=orange!30] (0,0.2+.5*1) rectangle ++(0.39,0.4) node[pos=.5] {$1.9$};
			
			\draw[fill=red!30] (0.39,0.2+.5*0) rectangle ++(0.61,0.4) node[pos=.5] {$1.44$};
			\draw[fill=red!30] (0.39,0.2+.5*1) rectangle ++(0.61,0.4) node[pos=.5] {$1.9$};
			
			\node at (-.1,.4+.5*0) {$\S_1$};
			\node at (-.1,.4+.5*1) {$\S_2$};
		\end{tikzpicture}
	\end{subfigure}
	\caption{Routine \textsc{Define-Ball-Subsets} (left),
		exemplary construction for the point~$h$
		with $ (h_1, h_2) = (1.1, 1.9) $ (right).
		The routine returns the representation
		$ h \approx 0.39 (0.56, 1.9) + 0.61 (1.44, 1.9) $.}
	\label{fig:circle}
	\null\vfill\null
\end{figure}
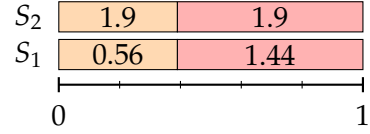
It computes two points on the boundary of the unit-ball
which have the same $y$-coordinate
and then calculates the corresponding coefficients
to represent~$h$ as a convex combination of the two.

\section{Set characterizations for integer problems}

In this section, we give two indicative convex-hull proofs
to illustrate the potential of our extended Zuckerberg framework.
We show that it can be applied to mixed-integer problems,
which the original method does not allow.
Furthermore, we show that it is well-suited to be used
with a non-fixed right-hand side, which leads to a new approach
to prove the total unimodularity of a matrix.

\subsection{Convex-hull proofs for mixed-integer problems}
To give a prominent example for the use of our extended Zuckerberg framework
in the mixed-integer case, we give a convex-hull proof
for the (single-item) uncapacitated lot-sizing problem (LS-U for short).
This problem asks for a cost-optimal production plan for a given product
over $n$~time periods to fulfil the customer demand $ d_j \in \R_+ $
in each period $ j \in [n] $
(see \cite{PochetWolsey2006} for an extensive introduction.)

The authors of \cite{KrarupBilde1977}
introduce the following extended formulation
for the feasible set of LS-U
(extended with respect to a straightforward formulation
with linearly-many variables, \cf \cite{PochetWolsey2006}):
let the variable $ w_{uj} \in \R_+ $ denote how much of the product
is produced in period $ u \in [n] $
for sale in the same or later period $ j \in [n] \setminus [u - 1] $.
Furthermore, variable $ y_u \in \F $
models the decision to perform any production in period $ u \in [n] $ or not.
Then we can represent the set of feasible production plans as
\begin{alignat}{3}
	\sum_{u = 1}^j w_{uj} &= d_j &
		\quad & \forall j \in [n],\label{equ:lot-sizing1}\\
	w_{uj} & \leq d_j y_u &
		\quad & \forall u \in [n], \forall j \in [n] \setminus [u - 1],
		\label{equ:lot-sizing2}\\
	w_{uj} & \in \R_+ &
		\quad & \forall u \in [n], \forall j \in [n] \setminus [u - 1],
		\label{equ:lot-sizing4}\\
	y_u & \in \F & \quad & \forall u \in [n].\label{equ:lot-sizing3}
\end{alignat}
Indeed, it is shown in \cite{KrarupBilde1977}
that the above model is integral,
\ie the set of solutions does not change
when relaxing $ y \in \F^n $ to $ y \in \FR^n $.
We will give an alternative proof based on \cref{thm:extended_zucker}.
\begin{theorem}[{\cite{KrarupBilde1977}}]
	Let $ P \coloneqq \conv \set{(y, w) \in \F^n \times \R^{n^2 - n}_+ \mid
		\cref{equ:lot-sizing1,equ:lot-sizing2}} $
	and $ H \coloneqq \set{(y, w) \in \FR^n \times \R^{n^2 - n}_+ \mid
		\cref{equ:lot-sizing1,equ:lot-sizing2}} $
	its linear relaxation.
	Then we have $ H = P $.
\end{theorem}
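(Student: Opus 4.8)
The plan is to verify the hypotheses of \cref{thm:extended_working_thm} (with the harmless adaptation that here $\FF$ is mixed-integer; this does not affect the argument of \cref{thm:extended_zucker}, which only uses that each slice $\varphiE{U}(t,\cdot)$ lies in $\conv(\FF)$, and below the slices will in fact be honest feasible points of $\FF$). The inclusion $\FF\subseteq H$ is immediate, so only the existence of suitable sets for an arbitrary $h=(\bar y,\bar w)\in H$ has to be shown. Indexing the sets by the variables, write $S_{y_u}$ for $u\in[n]$ and $S_{w_{uj}}$ for $u\in[n]$, $j\in[n]\setminus[u-1]$. Reading off \cref{equ:lot-sizing1,equ:lot-sizing2,equ:lot-sizing4,equ:lot-sizing3}, the set characterizations to be met are $\sum_{u=1}^{j}\phiE{U}(t,S_{w_{uj}})=d_j$ for all $j\in[n]$ and $t\in U$, $\phiE{U}(t,S_{w_{uj}})\le d_j\,\phiE{U}(t,S_{y_u})$ for all $u\le j$ and $t\in U$, $\phiE{U}(t,S_{w_{uj}})\ge 0$, and $\phiE{U}(t,S_{y_u})\in\{0,1\}$ — which I would enforce simply by letting every rectangle in $S_{y_u}$ have height $1$, so that $S_{y_u}$ is fully described by its support $A_u\subseteq U$ with $\muE{U}(S_{y_u})=|A_u|=\bar y_u$.

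Once the $A_u$ are fixed, the construction of the $w$-sets decouples over the demand periods: for each $j$ with $d_j>0$ one has to distribute the mass $d_j$ present at every $t\in U$ among the production periods $u\le j$ that are open at $t$ (i.e.\ with $t\in A_u$), giving period $u$ a total of $\bar w_{uj}$; then $S_{w_{uj}}$ collects the resulting rectangles, of height at most $d_j$ and supported in $A_u$ (the VUB characterization, since $\phiE{U}(t,S_{y_u})$ is $1$ on $A_u$ and $0$ elsewhere). This is a transportation problem, feasible by a max-flow--min-cut / Hall-type condition of the form $\bigcup_{u\le j}A_u=U$ together with $\sum_{u\in K}\bar w_{uj}\le d_j\,|\bigcup_{u\in K}A_u|$ for all $K\subseteq[j]$. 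The slices $\varphiE{U}(t,\cdot)$ are then vectors with $0/1$ $y$-part and a $w$-part satisfying \cref{equ:lot-sizing1,equ:lot-sizing2}, hence genuine elements of $\FF$. So the whole proof reduces to choosing the supports $A_u$, of prescribed measures $\bar y_u$, so that these conditions hold for every $j$ simultaneously. Here one exploits the structural inequalities valid on $H$: $\bar w_{uj}\le d_j\bar y_u$ (the VUB constraint itself), and, summing \cref{equ:lot-sizing2} over $u\le j$ against \cref{equ:lot-sizing1}, $\sum_{u\le j}\bar y_u\ge 1$ whenever $d_j>0$ — in particular $\bar y_1=1$ if $d_1>0$. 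Concretely I would process the demand periods $j=1,\dots,n$ in order, at step $j$ refining the tiling of $U$ produced at step $j-1$ (so the chunk allotted to a production period stays aligned across consecutive steps) and opening period $j$ on a fresh piece — very much in the spirit of the \textsc{Match}-based, topologically ordered construction used for the shortest-path polytope — and finally padding each $A_u$ to measure exactly $\bar y_u$, noting that extra ``open but idle'' periods violate none of the characterizations. Demand periods with $d_j=0$ force $\bar w_{\cdot j}=0$ and are disposed of by taking the corresponding $w$-sets empty.

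\textbf{The main obstacle} is exactly this last, global, point. A single production period $u$ is shared by every demand period $j\ge u$, so its support $A_u$ must accommodate $\supp S_{w_{uj}}$ for \emph{all} such $j$ at once, while for each fixed $j$ the chunks $\{\supp S_{w_{uj}}\}_{u\le j}$ have to tile $U$. Naive placement rules break this coupling — one checks on small instances that left-aligning every chunk, or using one fixed left-to-right order of the production periods in every demand period, fails — and it is the design of a layout of the $A_u$ that keeps the transportation/Hall inequalities valid across all demand periods simultaneously, together with its verification (which should reduce at each step to the two structural inequalities above), that constitutes the real work of the proof.
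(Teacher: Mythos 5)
Your reduction is set up correctly and coincides with the paper's framework up to the decisive step: the characterizations you write down are exactly \cref{equ:lot-sizing1-char,equ:lot-sizing2-char,equ:lot-sizing4-char,equ:lot-sizing3-char}, and encoding $S_{y_u}$ by a height-one support $A_u$ with $\muE{U}(S_{y_u})=\bar y_u$ is the natural way to enforce \cref{equ:lot-sizing3-char}. But the proposal stops precisely where the proof has to begin. You reduce everything to choosing the supports $A_u$ so that for every demand period $j$ the transportation problem with the Hall-type conditions $\bigcup_{u\le j}A_u=U$ and $\sum_{u\in K}\bar w_{uj}\le d_j\,\card{\bigcup_{u\in K}A_u}$ is feasible, you observe that naive layouts fail, and you then declare that designing such a layout and verifying it \qm{constitutes the real work of the proof}. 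That is a genuine gap, not a formality: the two structural inequalities you extract from $H$ do not by themselves yield the Hall conditions for any fixed layout (your own observation that $\bar y_{u_1}=\bar y_{u_2}=\tfrac12$, $\bar w_{u_1j}=\bar w_{u_2j}=\tfrac{d_j}{2}$ defeats nested/left-aligned supports shows the supports must be chosen adaptively, here disjointly), and you give neither a placement rule nor a proof that one always exists. As it stands this is a correct reduction plus an honest description of the missing piece, i.e.\ a plan rather than a proof.

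For comparison, the paper's proof (routine \textsc{Define-Lot-Sizing-Subsets}, \cref{Fig:lotsizing}) takes a different, much shorter route that avoids your transportation subproblem entirely: it left-aligns $S_{y_u}=([0,h_{y_u}),1)$ and smears production uniformly, $S_{w_{uj}}=([0,h_{y_u}),h_{w_{uj}}/h_{y_u})$, so each $S_{w_{uj}}$ shares the support of $S_{y_u}$ and has fractional height; no tiling of $U$ by height-$d_j$ chunks is attempted. Be aware, however, that the coupling you call the \qm{main obstacle} is not an artefact of your scheme: for $n=2$, $d_1=d_2=1$ and the point $h$ with $h_y=(1,\tfrac12)$, $h_{w_{11}}=1$, $h_{w_{12}}=h_{w_{22}}=\tfrac12$ (which lies in $H$), the routine's slice at $t<\tfrac12$ has $\phiE{U}(S_{w_{12}},t)+\phiE{U}(S_{w_{22}},t)=\tfrac32\ne d_2$ and the slice at $t\ge\tfrac12$ gives $\tfrac12\ne d_2$, so \cref{equ:lot-sizing1-char} is not met pointwise and these slices do not lie in $\conv(\FF)$, even though the measures average out correctly; \cref{thm:extended_zucker} requires pointwise membership. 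So the difficulty you isolated is the real one, and a complete argument must either supply the adaptive layout of the $A_u$ together with a feasibility proof of the per-period transportation problems (your route, for which the paper's feasibility-subproblem strategy with a Farkas argument is the natural tool) or repair the uniform-smearing construction; your proposal does neither.
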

\begin{proof}
	The relation $ H \subseteq P $ can easily be seen.
	In order to prove the reverse,
	we transform the constraints defining $ P $
	into set characterizations:
	\begin{alignat}{3}
		\sum_{u = 1}^j \phiE{U}(S_{w_{uj}}, t) &= d_j
			& \quad & \forall j \in [n], \forall t \in U,
				\label{equ:lot-sizing1-char}\\
		\phiE{U}(S_{w_{uj}}, t) & \leq d_t\phiE{U}(S_{y_u}, t)
			& \quad & \forall\, 1 \leq u \leq j \leq n, \forall t \in U,
				\label{equ:lot-sizing2-char}\\
		\phiE{U}(S_{w_{uj}}, t) & \in \R_+
			& \quad & \forall\, 1 \leq u \leq j \leq n, \forall t \in U,
				\label{equ:lot-sizing4-char}\\
		\phiE{U}(S_{y_u}, t) & \in \F
			& \quad & \forall u \in [n], \forall t \in U.
				\label{equ:lot-sizing3-char}
	\end{alignat}
	For a given point $ h = (h_w, h_y) \in H $,
	a corresponding set construction is given
	in routine \textsc{Define-Lot-Sizing-Sets}
	in \cref{Fig:lotsizing}.
	\begin{figure}
		\begin{algorithmic}[1]
			\Function{Define-Lot-Sizing-Subsets}{}
				\For{$ u \in [n] $}
					\If{$ h_{y_u} \neq 0 $}
						\State $ S_{y_u} \coloneqq ([0, h_{y_u}), 1) $
					\Else
						\State $ S_{y_u} \coloneqq \emptyset $
					\EndIf
				\EndFor
				\For{$ u \in [n] $}
					\For{$ j \in [n] \setminus [u - 1] $}
						\If{$ h_{y_u} \neq 0 $}
							\State $ S_{w_{uj}} \coloneqq
								([0, h_{y_u}), h_{w_{uj}} / h_{y_u}) $
						\Else
							\State $ S_{w_{uj}} \coloneqq \emptyset $
						\EndIf
					\EndFor
				\EndFor
			\EndFunction
		\end{algorithmic}
		\centering
		\caption{Routine \textsc{Define-Lot-Sizing-Sets}}
		\label{Fig:lotsizing}
	\end{figure}
	In Lines 2--8, the routine places the sets for the $y$-variables
	such that \cref{equ:lot-sizing3-char} is satisfied.
	Then the $w$-variables are placed in Lines 9--17.
	The variables~$ w_{uj} $ get a non-empty set only if $ h_{y_u} \neq 0 $.
	The corresponding sets~$ S_{w_{uj}} $ are defined
	such that they have the same support as~$ S_{y_{u}} $.
	The construction satisfies \cref{equ:lot-sizing2-char,equ:lot-sizing1-char,equ:lot-sizing4-char}.
	Additionally, the defined sets fulfil $ \muE{U}(S_{w_{uj}}) = h_{w_{uj}} $
	for all $ 1 \leq u \leq j \leq n $
	and $ \muE{U}(S_{y_u}) = h_{y_u} $
	for all $ 1 \leq u \leq n $,
	which finishes the proof.
\end{proof}
Our Zuckerberg proof for the lot-sizing problem is an example
of the greedy proof strategy from \cref{sec:shortest-path},
now applied to the mixed-integer case.
In the \supplement, we give further such examples in the context of
mixed-integer models for piecewise linear functions.

\subsection{Showing total unimodularity via Zuckerberg's method}

Via our extension of Zuckerberg's method,
it is also possible to show the total unimodularity of a matrix
by using the following famous characterization of totally unimodular matrices.
\begin{theorem}[Hoffmann and Kruskal, \cite{HoffmanKruskal1956}]
	Let $ A \in \set{0, 1, -1}^{m \times n} $.
	Then $A$ is totally unimodular
	iff $ \set{x \in \R \mid Ax \leq b, x \geq 0} $
	has only integral vertices for all $ b \in \Z^n $.
	\label{hoffi:thm}
\end{theorem}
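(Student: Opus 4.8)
This is the classical Hoffman--Kruskal characterization, which I would prove by standard polyhedral arguments with Cramer's rule doing the work in both directions. I will use the basic fact that a feasible point of a polyhedron in $\R^n$ is a vertex iff $n$ linearly independent constraints are active at it, and that a basic feasible solution of a system in standard form corresponds to a choice of a nonsingular square submatrix of the constraint matrix.

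\emph{Direction ``TU $\Rightarrow$ integral vertices''.} Fix an integral right-hand side $b$ and rewrite the system in standard form $Ax + s = b$, $x, s \geq 0$, noting that the augmented matrix $[\,A \mid I\,]$ is totally unimodular exactly when $A$ is. Every vertex of $\set{x : Ax \leq b,\, x \geq 0}$ is the relevant part of a basic feasible solution, hence corresponds to a nonsingular square submatrix $B$ of $[\,A \mid I\,]$ with the nonbasic variables set to $0$ and the basic ones equal to $B^{-1}b$. By total unimodularity $\det B = \pm 1$, so $B^{-1} = \pm\operatorname{adj}(B)$ is an integer matrix (the adjugate of an integer matrix is integral), whence $B^{-1}b \in \Z$. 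Thus all vertices are integral; this direction is routine.

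\emph{Direction ``integral vertices $\Rightarrow$ TU''.} Assume every polyhedron $P(b) := \set{x : Ax \leq b,\, x \geq 0}$ with integral $b$ has only integral vertices. Since $A$ has entries in $\set{0,1,-1}$ and singular submatrices have determinant $0$, it suffices to show that each nonsingular square submatrix $C$ of $A$ has $\det C = \pm 1$; and for that it is enough that $C^{-1}$ be integral, because then $\det C$ and $\det C^{-1} = 1/\det C$ are integers whose product is $1$. After permuting rows and columns I may assume $C$ is the top-left $k \times k$ block of $A$. To show $C^{-1}$ is integral, fix a column index $i \leq k$, set $z := C^{-1}e_i \in \R^k$, choose $y \in \Z^k$ with $\hat z := y + z \geq 0$, and let $\bar z \in \R^n$ agree with $\hat z$ in the first $k$ coordinates and vanish elsewhere. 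Build an integral right-hand side $b$ by setting $b_r := (A\bar z)_r$ for $r \leq k$ --- this is an integer since $(A\bar z)_r = (C\hat z)_r = (Cy)_r + (e_i)_r$ --- and $b_r := \lceil (A\bar z)_r\rceil$ for $r > k$, so that $\bar z \in P(b)$. The point $\bar z$ is in fact a vertex of $P(b)$: the $k$ rows $(Ax)_r \leq b_r$ with $r \leq k$ and the $n-k$ bounds $x_j \geq 0$ with $j > k$ are all active at $\bar z$, and the coefficient matrix of this subsystem,
\[
	\begin{pmatrix} C & * \\ 0 & I_{n-k} \end{pmatrix},
\]
is nonsingular because $C$ is; hence $\bar z$ is the unique solution of $n$ linearly independent active constraints. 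By hypothesis $\bar z$ is integral, so $\hat z$, and therefore $z = \hat z - y$, is integral, i.e.\ $C^{-1}e_i \in \Z^k$. Letting $i$ range over $1, \ldots, k$ shows $C^{-1}$ is integral, completing the proof.

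\emph{Main obstacle.} The forward implication and the reduction to ``$C^{-1}$ integral'' are essentially bookkeeping with determinants; the only genuinely delicate step is the construction in the converse, where one must manufacture an integral right-hand side forcing a vector that encodes a column of $C^{-1}$ to be a vertex. The care needed is to simultaneously restore nonnegativity via the integer shift $y$, pick the remaining entries of $b$ integral and large enough to preserve feasibility, and exhibit exactly $n$ linearly independent tight constraints whose coefficient matrix is block triangular with $C$ in the corner. Once that configuration is in place, integrality of vertices yields the claim.
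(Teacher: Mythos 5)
Your argument is correct, but note that the paper does not prove this statement at all: it is quoted as a classical result with a citation to Hoffman and Kruskal and then used as a black box (to turn the Zuckerberg-style integrality proofs into total-unimodularity statements), so there is no in-paper proof to compare against. What you give is the standard textbook argument: for \qm{TU $\Rightarrow$ integral vertices} you pass to standard form, use that $[\,A \mid I\,]$ is TU together with Cramer's rule/adjugate to get $B^{-1}b \in \Z^n$; for the converse you use the Veinott--Dantzig construction, shifting a column $z = C^{-1}e_i$ of the inverse of a nonsingular submatrix by an integer vector to restore nonnegativity, choosing the remaining right-hand sides as integral round-ups, and exhibiting $n$ linearly independent tight constraints whose coefficient matrix is block triangular with $C$ in the corner, so that the hypothesis forces $z$ to be integral and hence $\det C = \pm 1$. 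Both steps are carried out correctly (including the key feasibility and nondegeneracy checks), so the proof stands on its own; the only cosmetic remark is that the paper's statement itself has small typos ($x \in \R$ should be $x \in \R^n$ and $b \in \Z^n$ should be $b \in \Z^m$), which your proof implicitly corrects.
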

We will demonstrate the principle by reproving the well-known result
that the incidence matrix of a bipartite graph
is totally unimodular using Zuckerberg's method.

Let $ G = (V, E) $ be an undirected graph,
and let $ b \in \Z^{\card{E}} $ be an arbitrary integral vector.
Further, let~$P$ be the polytope defined as the convex hull
of all vectors $ x \in \N^{\card{E}} $ that satisfy
\begin{equation}
	x_i + x_j \leq b_{ij} \quad \forall \set{i, j} \in E.
	\label{inci:set-inq}
\end{equation}
The constraint matrix~$A$ corresponding to system~\cref{inci:set-inq}
is the transpose of a node-edge incidence matrix.
Its total unimodularity is stated in the following theorem,
for which we give a very simple proof based on \cref{thm:extended_zucker}.
\begin{theorem}
	Let $ P \coloneqq \conv \set{x \in \N^{\card{E}} \mid
		\cref{inci:set-inq}} $
	and $ H \coloneqq \set{x \in \R_+^{\card{E}} \mid
		\cref{inci:set-inq}} $
	its linear relaxation.
	Then we have $ P = H $.
	\label{inci:thm}
\end{theorem}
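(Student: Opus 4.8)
The inclusion $P \subseteq H$ is immediate, so the real task is $H \subseteq P$, and I plan to obtain it from \cref{thm:extended_zucker} via \cref{thm:extended_working_thm}. Given an arbitrary $h \in H$, it suffices to construct sets $S_v \in \LLE{U}$, one for each $v \in V$, all of whose rectangles have \emph{integral} height, with $\muE{U}(S_v) = h_v$ for every $v$, and which satisfy the set characterizations of the edge constraints~\cref{inci:set-inq} and of $x \geq 0$, namely
\begin{equation*}
	\phiE{U}(t, S_i) + \phiE{U}(t, S_j) \leq b_{ij} \quad \forall \set{i, j} \in E
	\qquad\text{and}\qquad
	\phiE{U}(t, S_v) \geq 0 \quad \forall v \in V,
\end{equation*}
for all $t \in U$. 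Indeed, integrality of the heights together with these inequalities forces $\varphiE{U}(t, S_1, \ldots, S_n) \in \FF \subseteq \conv(\FF)$ for every $t$, so \cref{thm:extended_zucker} then yields $h \in \conv(\FF) = P$.

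The plan is to use that $G$ is bipartite: fix a bipartition $V = V_1 \cup V_2$. For $v \in V$, write $h_v = \myfloor{h_v} + \rho_v$ with $\rho_v \in [0, 1)$, and give $S_v$ a base rectangle of height $\myfloor{h_v}$ spanning all of $U$ plus a unit-height rectangle of width $\rho_v$, placed flush left for $v \in V_1$ and flush right for $v \in V_2$:
\begin{equation*}
	S_v \coloneqq \begin{cases}
		([0, 1), \myfloor{h_v}) \cup ([0, \rho_v), 1), & v \in V_1,\\
		([0, 1), \myfloor{h_v}) \cup ([1 - \rho_v, 1), 1), & v \in V_2,
	\end{cases}
\end{equation*}
with the convention that a degenerate rectangle (zero height or zero width) is dropped, so $S_v = \emptyset$ when $h_v = 0$ and otherwise $S_v$ is a union of at most three non-overlapping rectangles of non-negative integral height. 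Then $\muE{U}(S_v) = \myfloor{h_v} + \rho_v = h_v$ and $\phiE{U}(t, S_v) \in \set{\myfloor{h_v}, \myfloor{h_v} + 1}$ is non-negative, so the non-negativity characterization holds.

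The edge characterization is the heart of the argument. Fix $\set{i, j} \in E$ with $i \in V_1$, $j \in V_2$. For $t \in U$, the value $\phiE{U}(t, S_i)$ equals $\myfloor{h_i}$, raised by $1$ exactly on $t \in [0, \rho_i)$, and $\phiE{U}(t, S_j)$ equals $\myfloor{h_j}$, raised by $1$ exactly on $t \in [1 - \rho_j, 1)$; these two raised regions overlap precisely on $[1 - \rho_j, \rho_i)$, which is non-empty iff $\rho_i + \rho_j > 1$. Hence $\phiE{U}(t, S_i) + \phiE{U}(t, S_j)$ is at most $\myfloor{h_i} + \myfloor{h_j}$ if $\rho_i = \rho_j = 0$, at most $\myfloor{h_i} + \myfloor{h_j} + 1$ if $0 < \rho_i + \rho_j \leq 1$, and at most $\myfloor{h_i} + \myfloor{h_j} + 2$ in general. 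Using $\myfloor{h_i} + \myfloor{h_j} + \rho_i + \rho_j = h_i + h_j \leq b_{ij}$ and that $b_{ij}$ is an integer, the first case gives the bound $h_i + h_j \leq b_{ij}$ directly; in the second, $\rho_i + \rho_j > 0$ forces $\myfloor{h_i} + \myfloor{h_j} \leq b_{ij} - 1$; and in the third, $\rho_i + \rho_j > 1$ forces $\myfloor{h_i} + \myfloor{h_j} \leq b_{ij} - 2$ — in every case the sum of the two heights is at most $b_{ij}$. So the edge characterization holds at every $t$, which completes the proof.

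The only genuine difficulty lies in making the integral and fractional parts cooperate: one must see that bipartiteness is precisely what allows the two unit rectangles of a given edge to be pushed to opposite ends of $U$, so that their overlap has width at most $\rho_i + \rho_j - 1$, and then observe that the integrality of $b_{ij}$ sharpens $h_i + h_j \leq b_{ij}$ into the stronger integer inequalities needed in the two nontrivial cases. The rest — the measure computation, checking that each $S_v$ is a legitimate element of $\LLE{U}$ under the degenerate-rectangle conventions, and the non-negativity condition — is routine bookkeeping.
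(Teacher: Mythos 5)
Your proposal is correct and follows essentially the same route as the paper: the same rectangle construction (integer base rectangle of height $\myfloor{h_v}$ over all of $U$ plus a unit-height fractional rectangle, pushed to opposite ends of $U$ on the two sides of the bipartition), verified via the extended Zuckerberg criterion. The only difference is that you spell out the case analysis for the edge characterization, which the paper dismisses as apparent.
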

\begin{proof}
	The relation $ P \subseteq H $ is obvious.
	In order to prove $ H \subseteq P $,
	we transform constraint~\cref{inci:set-inq}
	into the set characterization
	\begin{alignat}{1}
		\phiE{U}(S_i, t) + \phiE{U}(S_j, t) \leq b_{ij}
			& \quad \forall \set{i, j} \in E, \forall t \in U.
			\label{inci:char1}
	\end{alignat}
	\WLOGc, we can assume $ b_{ij} \geq 0 $ for all $ \set{i, j} \in E $,
	since otherwise the polytope~$H$ is empty.
	For each point $ h \in H $, we then need to find sets~$ S_a $
	for all $ a \in A $ such that they fulfil $ \muE{U}(S_a) = h_a $
	and the above conditions hold.
	Let~$ W, Y \subseteq V $ be the two bipartite node sets of~$G$.
	The sets $ S_a $ are defined
	in routine \textsc{Define-Incidence-Matrix-Subsets}
	given in \cref{fig:zuckerberg-inci}.
	\begin{figure}
		\null\vfill\null
		\begin{algorithmic}[1]
			\Function{Define-Incidence-Matrix-Subsets}{}
				\For{each $ y \in Y $}
					\State $ S_y \coloneqq ([0, 1),
						\myfloor{h_y}) \cup ([0, h_y - \myfloor{h_y}), 1) $
				\EndFor
				\For{each $ w \in W $}
					\State $ S_w \coloneqq ([0, 1),
						\myfloor{h_w}) \cup ([1 - h_w + \myfloor{h_w}, 1), 1) $
				\EndFor
			\EndFunction
		\end{algorithmic}
		\caption{Routine \textsc{Define-Incidence-Matrix-Subsets}}
		\label{fig:zuckerberg-inci}
		\null\vfill\null
	\end{figure}
	From the above construction it is apparent that for each $ h \in H $
	the corresponding sets satisfy~\cref{inci:char1}.  
	Thus, we have proved $ H \subseteq P $.
\end{proof}
The desired result then follows from \cref{inci:thm,hoffi:thm}
by exploiting that total unimodularity is preserved under transposition.
\begin{corollary}
	Let $A$ be the node-edge incidence matrix of a bipartite graph.
	Then~$A$ is totally unimodular.
\end{corollary}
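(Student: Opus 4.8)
The plan is to deduce the statement from \cref{inci:thm} and the Hoffman--Kruskal characterization \cref{hoffi:thm}, using only the elementary fact that total unimodularity is preserved under transposition. First I would recall that the constraint matrix of system~\cref{inci:set-inq} is exactly the transpose $A^\top$ of the node-edge incidence matrix $A$ of the bipartite graph $G$: it has one row for each edge $\set{i, j} \in E$ and one column for each node, with unit entries precisely in the two columns belonging to the endpoints of the respective edge. In particular, all of its entries lie in $\set{0, 1, -1}$, so \cref{hoffi:thm} is applicable to it.

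Next I would observe that \cref{inci:thm} has been established for an \emph{arbitrary} integral right-hand side $b$, and therefore states that the polyhedron $\set{x \geq 0 \mid A^\top x \leq b}$ coincides with the convex hull of its integral points for every integral $b$; equivalently, this polyhedron has only integral vertices for all integral $b$. This is exactly the hypothesis required by \cref{hoffi:thm} for the matrix $A^\top$, which hence yields that $A^\top$ is totally unimodular. Finally, since every square submatrix of $A$ is the transpose of a square submatrix of $A^\top$, and a determinant is invariant under transposition, the total unimodularity of $A^\top$ transfers to $A$ itself, which is the asserted claim.

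There is essentially no genuine obstacle in this argument: the corollary is a direct translation of \cref{inci:thm} through \cref{hoffi:thm}. The only point deserving a moment of attention is to make sure that the \qm{for all integral $b$} quantifier demanded by \cref{hoffi:thm} is indeed supplied by \cref{inci:thm} --- and it is, because $b$ enters the latter theorem as a free integral parameter rather than as a fixed one.
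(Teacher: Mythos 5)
Your argument is correct and is exactly the paper's proof: the paper likewise derives the corollary from \cref{inci:thm} (with the integral right-hand side~$b$ kept as a free parameter) combined with \cref{hoffi:thm}, and then transfers total unimodularity from the constraint matrix $A^\top$ to $A$ via invariance of determinants under transposition. No gaps beyond the level of detail the paper itself leaves implicit.
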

We think that the possibility to consider arbitrary right-hand sides
in an algorithmic fashion makes the Zuckerberg approach a valuable tool
for proving total unimodularity.
A further example for this concept is given in the \supplement.

\section{Extensions of Zuckerberg's method for graphs of functions}

In \cite{gupte2020extended}, Zuckerberg's method was adapted
to characterize the convex hull of the graphs
of certain bilinear functions defined over the unit cube.
Using our extended framework for convex-hull proofs
from \cref{sec:zuckerberg_extended},
we will generalize these results in a twofold manner.
Firstly, we extended the machinery introduced there
for bilinear functions to general boolean functions.
This allows us to treat common functions like the $\max$-function.
In addition, we generalize the applicability
of Zuckerberg's method to non-box domains,
such that it works with functions defined over any $0$/$1$-polytope.
Secondly, we will derive a criterion to prove convex-hull results
for the convex hull of graphs of bilinear functions
over general polytopal domains.

\subsection{Extension for boolean functions over $0$/$1$-polytopes}
\label{Sec:Boolean_Extension}

Let $ \FF \subseteq \R^n $ be a finite set of points,
and let $ T \coloneqq \conv(\FF) $ be their convex hull.
We will now consider functions $ f\colon \FF \rightarrow \R $ of the form
\begin{equation*}
	f(x) = \sum_{i = 1}^k a_i \Psi_i(x_1, \ldots x_n),
\end{equation*} 
with $ \func{\Psi_i}{\FF}{\R} $ and $ a_i \in \R $ for $ i \in [k] $.
The convex hull of the graph of~$f$ is the set
\begin{equation*}
	X(f) \coloneqq \conv \SSet{(x, z) \in T \times \R }{z = f(x)}.
\end{equation*}
Further, let the two functions $ \vex[f]\colon T \rightarrow \R $
and $ \cav[f]\colon T \rightarrow \R $,
denoting the convex and the concave envelope of~$f$ over $T$, respectively,
be defined as
\begin{align*}
	\vex[f](x) &\coloneqq \min\SSet{z \in \R}{(x, z) \in X(f)},\\
		\cav[f](x) &\coloneqq \max\SSet{z \in \R}{(x, z) \in X(f)},
\end{align*}
so that we have
\begin{equation*}
	X(f) = \SSet{(x, z) \in T \times \R}{\vex[f](x) \leq z \leq \cav[f](x)}.
\end{equation*}
Introducing variables $ y_i $
to represent the products $ \Psi_i(x_1, \ldots x_n) $,
we are interested in describing~$ X(f) $
in terms of the $x$- and $y$-variables.
To be more precise, we define a function
$ \pi[f]\colon \R^n \times \R^k \to \R^{n + 1} $ via
\begin{equation*}
	\pi[f](x, y) = \left(x, \sum_{i = 1}^k a_i y_i\right)
\end{equation*}
and extend it to the power set of $ \R^n \times \R^k $ in a canonical fashion:
\begin{equation*}
	\pi[f](P) = \SSet{\pi[f](x, y)}{(x, y) \in P}
\end{equation*}
for every $ P \subseteq \R^n \times \R^k $.
For a polytope~$P$, let the functions $ \LB_{P}[f]\colon T \to \R $
and $ \UB_{P}[f]\colon T \to \R $ be defined as
\begin{alignat*}{1}
	\LB_{P}[f](x) &= \min\SSet{\sum_{i = 1}^k a_i y_i}{(x, y) \in P}
		\hspace{-0.05cm} = \hspace{-0.05cm} \min\SSet{z \in \R}{(x, z) \in \pi[f](P)},\\
	\UB_{P}[f](x) &= \max\SSet{\sum_{i = 1}^k a_i y_i}{(x, y) \in P}
		\hspace{-0.05cm} = \hspace{-0.05cm} \max\SSet{z \in \R}{(x, z) \in \pi[f](P)},
\end{alignat*}
respectively, so that 
\begin{equation*}
	\pi[f](P) = \SSet{(x, z) \in T \times \R}
		{\LB_{P}[f](x) \leq z \leq	\UB_{P}[f](x)}.
\end{equation*}
The goal is to give a criterion which allows to prove
$ X(f) = \pi[f](P) $ for some given function~$f$ and polytope~$P$,
which is equivalent to $ \vex[f](x) = \LB_{P}[f](x) $
and $ \cav[f](x) = \UB_{P}[f](x) $ for all $ x \in T $.
To this end, we define the set
\begin{equation*}
	Z(x) \coloneqq \SSet{(S_1, \ldots, S_n) \in \LL^n }{\begin{split}
		\mu(S_i) = x_i \quad \forall i \in [n],\\
		\varphi(t, S_1, \ldots, S_n) \in \FF\quad \forall t \in U
		\end{split}}.
\end{equation*}
It contains all tuples of admissible sets $ S_1, \ldots, S_n $
which express some point $ x \in \FR^n $
via the vertices of $ \FF $ using Zuckerberg's certificate.
Finally, let the function $ \Omega\colon \LL^n \times \R^{\FF} \to U $,
\begin{equation*}
	\Omega(S_1, \ldots, S_n, \Psi) \coloneqq
		\mu(\SSet{t \in U}{\Psi(\varphi(t, S_1, \ldots, S_n)) = 1})
\end{equation*}
measure the size of the support of $ \Psi \circ \varphi $
for some $ \Phi\colon \FF \to \R $ and some fixed $ (S_1, \ldots S_n) \in \LL^n $.
The proof of $ \pi[f](P) = X(f) $ can be split up
into $ \pi[f](P) \subseteq X(f) $ and $ X(f) \subseteq \pi[f](P) $.
The first inclusion is often comparably easy to prove,
and for the validity of the second inclusion
we give the following criterion. 
\begin{theorem}
	\label{cor:set_interpretation3}
	If $ \FF \subseteq \F^n $ and $ f = \sum_{i = 1}^k a_i \Psi_i $,
	with $ \func{\Psi_i}{\F^n}{\F} $ and $ a_i \in \R $ for $ i \in [k] $,
	we have
	\begin{align*}
		\vex[f](x) &= \min\SSet{\sum_{i \in [k]}
			a_i \Omega(S_1, \ldots, S_n, \Psi_i)}
			{(S_1, \ldots, S_n) \in Z(x)},\\
		\cav[f](x) &= \max\SSet{\sum_{i \in [k]}
			a_i \Omega(S_1, \ldots, S_n, \Psi_i)}
			{(S_1, \ldots, S_n) \in Z(x)}
	\end{align*}
	for all $ x \in T $.
	In particular, for a polytope $ P \subseteq \R^{n + k} $
	with $ \pi[f](P) \subseteq X(f) $
	we have $ \pi[f](P) = X(f) $ iff for every $ x \in T $
	there are sets $ (S_1, \ldots, S_n) \in Z(x) $
	and $ (S'_1, \ldots, S'_n) \in Z(x) $ with
	\begin{align*}
		\sum_{i \in [k]} a_i \Omega(S_1, \ldots, S_n, \Psi_i) &= \LB_P[f](x),\\
		\sum_{i \in [k]} a_i \Omega(S'_1, \ldots, S'_n, \Psi_i) &= \UB_P[f](x).
	\end{align*}
\end{theorem}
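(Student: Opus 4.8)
The plan is to establish the two extremal formulas for $\vex[f]$ and $\cav[f]$ first, and then to read off the ``in particular'' part as a short consequence. The key computation I would carry out is the following: fix $x \in T$ and any tuple $(S_1,\ldots,S_n) \in Z(x)$, and set $\lambda_\xi \coloneqq \mu(L_\xi(S_1,\ldots,S_n))$ for $\xi \in \FF$, as in \cref{Cor:Convex_Combinations}. Since $\varphi(t,S_1,\ldots,S_n) \in \FF$ for all $t \in U$, the sets $L_\xi(S_1,\ldots,S_n)$, $\xi \in \FF$, partition $U$; hence $\Omega(S_1,\ldots,S_n,\Psi_i) = \sum_{\xi \in \FF :\, \Psi_i(\xi) = 1} \lambda_\xi = \sum_{\xi \in \FF} \Psi_i(\xi)\lambda_\xi$ (using $\Psi_i(\xi) \in \F$), and interchanging the two finite sums together with $f = \sum_i a_i \Psi_i$ yields
\begin{equation*}
	\sum_{i \in [k]} a_i \Omega(S_1,\ldots,S_n,\Psi_i) = \sum_{\xi \in \FF} \lambda_\xi f(\xi).
\end{equation*}
By \cref{Cor:Convex_Combinations}, $(\lambda_\xi)_{\xi \in \FF}$ is a vector of convex multipliers with $\sum_\xi \lambda_\xi \xi = x$, so $\bigl(x, \sum_\xi \lambda_\xi f(\xi)\bigr) = \sum_\xi \lambda_\xi (\xi, f(\xi)) \in X(f)$, which gives $\vex[f](x) \leq \sum_i a_i \Omega(S_1,\ldots,S_n,\Psi_i) \leq \cav[f](x)$ and hence ``$\geq$'' in the $\vex[f]$-formula and ``$\leq$'' in the $\cav[f]$-formula. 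For the reverse inequalities I would pick multipliers $\lambda^*_\xi \geq 0$ with $\sum_\xi \lambda^*_\xi = 1$, $\sum_\xi \lambda^*_\xi \xi = x$ and $\sum_\xi \lambda^*_\xi f(\xi) = \vex[f](x)$ — such a convex representation of the point $(x, \vex[f](x)) \in X(f)$ exists since $X(f)$ is the convex hull of the finitely many points $(\xi, f(\xi))$ — and realise them by sets, just as in the forward direction of \cref{thm:gupteC}: partition $U$ into consecutive half-open intervals $I_\xi$ of length $\lambda^*_\xi$ and set $S_i \coloneqq \bigcup_{\xi :\, \xi_i = 1} I_\xi \in \LL$. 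Then $\mu(S_i) = x_i$ and $\varphi(t,S_1,\ldots,S_n) = \xi$ for $t \in I_\xi$, so $(S_1,\ldots,S_n) \in Z(x)$ with $\mu(L_\xi(S_1,\ldots,S_n)) = \lambda^*_\xi$, and the computation above gives $\sum_i a_i \Omega(S_1,\ldots,S_n,\Psi_i) = \vex[f](x)$. Thus the minimum is attained and equals $\vex[f](x)$; the argument for $\cav[f]$ is identical with a maximising $\lambda^*$.

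For the ``in particular'' part, let $P$ be a polytope with $\pi[f](P) \subseteq X(f)$. If $\pi[f](P) = X(f)$, then for each $x \in T$ the fibers $\set{z \mid (x,z) \in \pi[f](P)}$ and $\set{z \mid (x,z) \in X(f)}$ coincide, so $\LB_P[f](x) = \vex[f](x)$ and $\UB_P[f](x) = \cav[f](x)$; by the first part these numbers are attained on $Z(x)$, which proves one direction. For the converse, I would evaluate the hypothesis only at the points $\xi \in \FF$: the constraints $\mu(S_i) = \xi_i \in \F$ force $S_i \in \set{\emptyset, U}$, so $Z(\xi)$ is a singleton, and by the computation above its value equals $\sum_i a_i \Psi_i(\xi) = f(\xi)$. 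Hence the assumed existence of sets in $Z(\xi)$ attaining $\LB_P[f](\xi)$ and $\UB_P[f](\xi)$ forces $\LB_P[f](\xi) = \UB_P[f](\xi) = f(\xi)$, that is, $(\xi, f(\xi)) \in \pi[f](P)$ for every $\xi \in \FF$. Since $\pi[f](P)$ is convex, it then contains $\conv\SSet{(\xi,f(\xi))}{\xi \in \FF} = X(f)$, and together with $\pi[f](P) \subseteq X(f)$ this gives $\pi[f](P) = X(f)$.

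The bookkeeping in the first part is essentially a rerun of the proof of \cref{thm:gupteC}, with the heights of rectangles playing the role of function values. The one place that needs care is the converse of the ``in particular'' part: merely producing sets in $Z(x)$ that attain $\LB_P[f](x)$ at a \emph{generic} $x$ yields nothing new, because $\pi[f](P) \subseteq X(f)$ already gives $\vex[f](x) \leq \LB_P[f](x)$ and the attained value only reproduces that inequality. The argument must therefore be localised at the vertices $\xi \in \FF$, where the $0$/$1$ measure constraints collapse $Z(\xi)$ to a single tuple of value $f(\xi)$ and force $(\xi,f(\xi)) \in \pi[f](P)$; this is precisely the step that uses the hypothesis $\FF \subseteq \F^n$.
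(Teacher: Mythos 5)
Your proof is correct, and for the two envelope formulas it follows essentially the same route as the paper: the paper proves them in the supplement via \cref{thm:zucker_function_small} (the interval-partition correspondence between convex multipliers $\lambda_\xi$ and tuples in $Z(x)$, exactly your two-step bookkeeping) followed by the same interchange-of-sums identity $\sum_{i} a_i \Omega(S_1,\ldots,S_n,\Psi_i) = \sum_{\xi\in\FF}\mu(L_\xi(S_1,\ldots,S_n))f(\xi)$. Where you genuinely go beyond the paper is the \qm{in particular} part, for which the paper records no explicit argument. Your diagnosis there is accurate: since the achievable values over $Z(x)$ fill exactly the interval $[\vex[f](x),\cav[f](x)]$, attainment of $\LB_P[f](x)$ and $\UB_P[f](x)$ at a generic $x$ is automatic once $\pi[f](P)\subseteq X(f)$ holds with nonempty fibres, so the converse implication cannot be read off from the formulas alone. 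Your rescue — evaluating the hypothesis at the binary points $\xi\in\FF$, where $\mu(S_i)=\xi_i\in\F$ collapses $Z(\xi)$ to the single tuple with $S_i\in\set{\emptyset,U}$ and value $f(\xi)$, so that $\LB_P[f](\xi)=\UB_P[f](\xi)=f(\xi)$, hence $(\xi,f(\xi))\in\pi[f](P)$ (the minimum is attained because $\pi[f](P)$ is a polytope), and then convexity yields $X(f)=\conv\SSet{(\xi,f(\xi))}{\xi\in\FF}\subseteq\pi[f](P)$ — is exactly the missing ingredient, and it is where the hypothesis $\FF\subseteq\F^n$ (singleton fibres of $X(f)$ over binary points) is used. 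One cosmetic remark: your claim that $\mu(S_i)=1$ forces $S_i=U$ relies on elements of $\LL$ being finite unions of half-open intervals, which is indeed how $\LL$ is defined, so the collapse of $Z(\xi)$ is legitimate; it would be worth one sentence making that explicit.
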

\cref{cor:set_interpretation3} gives us a Zuckerberg-type characterization
of $ \vex[f] $ and $ \cav[f] $.
To apply it, we need to design for a general point $ x \in T $
the sets $ S_1, \ldots, S_n \in Z(x) $
such that we minimize $ \sum_{i \in [k]} a_i \Omega(S_1, \ldots, S_n, \Psi_i) $
and $ S'_1, \ldots, S'_n \in Z(x) $
such that we maximize $ \sum_{i \in [k]} a_i \Omega(S'_1, \ldots, S'_n, \Psi_i) $. 
The proof of \cref{cor:set_interpretation3}
is given
in the \supplement.

The expression $ \Omega(S_1, \ldots, S_n, \Psi) $
can be made more tractable when some specific functions $ \Psi $ is given.
Consider, for instance, $ \Psi(x_1, x_2) = x_1 x_2 $.
Then we can simplify:
\begin{equation*}
	\Omega(S_1, S_2, \Psi) =
		\mu(\SSet{t \in U}{\phi(t, S_1)\phi(t, S_2) = 1}) = \mu(S_1 \cap S_2).
\end{equation*}
We exemplarily give similar representations for $ \Omega(S_1, \ldots, S_n, \Psi) $
for some further Boolean functions in \cref{tb:set-char4}.
A specialization of \cref{cor:set_interpretation3}
for the case $ f(x) = \sum_{1 \leq i < j \leq n} a_{ij} x_i x_j $ and $ \FF = \F^n $
was proved in \cite{gupte2020extended}
using the above simplification.
In the following, we will demonstrate how to use \cref{cor:set_interpretation3}
to give convex-hull proofs for more general domains and functions.

\begin{table}[h]
	\centering
	\caption{Simplifications of $ \Omega(S_1, \ldots, S_n, \Psi) $
		for specific boolean $ \Psi $ functions}
	\label{tb:set-char4}
	\begin{tabular}{rrr}
		\toprule
		$ \Psi $ & Corresp.\ Boolean operator & Simplified $ \Omega $\\
		\midrule
		$ \min(x_i, x_j) $ & AND & $ \mu(S_i \cap S_j) $\\
		$ \max(x_i, x_j) $ & OR & $ \mu(S_i \cup S_j) $\\
		$ x_i \text{ XOR } x_j $ & XOR & $ \mu((S_i \cap \bar{S}_j)
		\cup (\bar{S}_i \cap S_j)) $\\
		\midrule
		$ \min(x_1, \ldots, x_n) $ & AND & $ \mu(S_1 \cap \ldots \cap S_n) $\\
		$ \max(x_1, \ldots, x_n) $ & OR & $ \mu(S_1 \cup \ldots \cup S_n) $\\
		\bottomrule
	\end{tabular}
\end{table}

\subsubsection{Convex-hull proofs for polytopal domain}

Generalizing an example given in \cite{gupte2020extended} for unit-box domains,
we show here how to characterize the McCormick-relaxation
of the product of two binary variables over a non-box binary polytope.
Let $ \FF \coloneqq \set{x \in \F^2 \mid x_1 + x_2 \geq 1} $
and $ \func{f}{\FF}{\F},\, f(x_1, x_2) = x_1 x_2 $, and let 
\begin{equation*}
	P \coloneqq \SSet{(x_1, x_2, z) \in \FR^3}
		{z \leq x_1,\, z \leq x_2,\, z \geq x_1 + x_2 - 1,\, x_1 + x_2 \geq 1}.
\end{equation*}
The direction $ \pi[f](P) \subseteq X(f) $ can easily be verified
by checking if the extreme points of~$ X(f) $,
namely $ (0, 1, 0), (1, 0, 0) $ and $ (1, 1, 1) $, are feasible for~$P$.
For the reverse direction, we plug in the simplification of $ \Omega $
for $f$ given in \cref{tb:set-char4} into \cref{cor:set_interpretation3}.
We deduce that we need to find two sets~$ S_1 $ and~$ S_2 $ which fulfil
\begin{equation*}
	\mu(S_1 \cap S_2) \leq \min\set{x_1,x_2}
	\label{mccormickpolytope1}.
\end{equation*}
It follows $ \cav[f](x) \leq \min\set{x_1, x_2} $,
and with $ S_1 \coloneqq [0, x_1) $ and $ S_i \coloneqq [0, x_2) $
we see that this bound is attained for all $ x \in \conv(\FF) $.
Therefore, the concave envelope of~$f$
is given by the inequalities $ z \leq x_1 $ and $ z \leq x_2 $.
Similarly,
\begin{equation*}
	\mu(S_1 \cap S_2) \geq x_1 + x_2 - 1
	\label{mccormickpolytope2} 
\end{equation*}
leads to the bound $ \vex[f](x) \geq x_1 + x_2 - 1 $,
and with $ S_1 \coloneqq [0, x_1) $ and $ S_2 \coloneqq [1 - x_2, 1) $
it is attained for all $ x \in \conv (\FF) $.
Thus, the convex envelope of~$f$ is given by $ z \geq  x_1 + x_2 - 1 $.
Finally, the constraint $ x_1 + x_2 \geq 1 $
is needed for the initial restriction of the domain.

\subsubsection{Convex-hull proofs for general functions}

Now we present an example for the $ \max $-function,
which shows that our framework is applicable to more general functions
than the bilinear functions studied in \cite{gupte2020extended}.
We consider the case $ \func{f}{\F^n}{\F},\, f(x) = \max(x_1, \ldots, x_n) $
and
\begin{equation*}
	P \coloneqq \SSet{(x_1, \ldots, x_n, z) \in \FR^{n + 1}}
		{z \leq x_1 + \ldots + x_n,\, z \leq 1,\, z \geq x_i\, \forall i \in [n]}.
\end{equation*}
As in the previous example, it is straightforward
to verify $ \pi[f](P) \subseteq X(f) $.
For the converse, we use the simplification for~$f$
given in \cref{tb:set-char4} to see that the sets $ S_1, \ldots, S_n $
need to fulfil
\begin{equation*}
	\mu(S_1 \cup \ldots \cup S_n) \leq \min\lrset{x_1 + \ldots + x_n, 1}
	\label{max1}
\end{equation*}
in order to satisfy \cref{cor:set_interpretation3}.
Consequently, we have $ \cav[f](x) \leq \min\set{x_1 + \ldots + x_n, 1} $.
With $ S_1 \coloneqq [0, x_1) $
and $ S_i \coloneqq [x_{i - 1}, x_{i - 1} + x_i) \mod 1 $
for $ i \in \set{2, \ldots, n} $,
this bound is attained for all $ x \in \FR^n $.
Therefore, the concave envelope of~$f$
is given by $ z \leq x_1 + \ldots + x_n $ and $ z \leq 1 $.
Furthermore, from
\begin{equation*}
	\mu(S_1 \cup \ldots \cup S_n) \geq \max\set{x_1, \ldots, x_n}
\label{max2}
\end{equation*}
we obtain the bound $ \vex[f](x) \geq \max\set{x_1, \ldots, x_n} $,
and setting $ S_i \coloneqq [0, x_i) $ for $ i \in \set{1, \ldots, n} $
makes it tight for all $ x \in \FR^n $.
Thus, the convex envelope of~$f$ is given by $ z \geq x_i $ for $ i \in \set{1, \ldots, n} $.

\subsection{Extension for bilinear functions over general polytopes}

Finally, we derive a generalization of the results from \cite{gupte2020extended}
which allows us to compute the convex hull of the graph of a bilinear function
over a general polytopal domain~$T$.
We start by defining the set
\begin{equation*}
	\ZE(x) \coloneqq \SSet{(S_1, \ldots, S_n) \in (\LLE{U})^n}
		{\begin{split}
			\muE{U}(S_i) = x_i \text{ for all } i \in [n],\\
			\varphiE{U}(t, S_1, \ldots, S_n) \in \FF\, \forall t \in U
			\end{split}},
\end{equation*}
which contains all admissible sets $ S_1, \ldots, S_n $
for \cref{cor:set_interpretation3}.
Furthermore, let the two functions $ \bar{\psi}_-, \bar{\psi}_+ \colon P \to \R $ with
\begin{align*}
	\bar{\psi}_-(x) &= \min\SSet{\sum_{\xi \in \FF}
		\muE{U}(\LE{U}_\xi(S_1, \ldots, S_n)) f(\xi)}
		{(S_1, \ldots, S_n) \in \ZE(x)},\\
	\bar{\psi}_+(x) &= \max\SSet{\sum_{\xi \in \FF}
		\muE{U}(\LE{U}_\xi(S_1, \ldots, S_n)) f(\xi)}
		{(S_1, \ldots, S_n) \in \ZE(x)}
\end{align*}
encode the convex and concave envelope, respectively,
in a Zuckerberg fashion.
We now derive an auxiliary representation of $ X(f) $ in terms of these two functions.
\begin{lemma}
	\label{thm:zucker_function2}
	For every function $ f\colon \FF \to \R $, we have
	\begin{equation*}
		X(f) = \SSet{(x, z) \in T \times \R}
			{\bar{\psi}_-(x) \leq z \leq \bar{\psi}_+(x)}.
	\end{equation*}  
\end{lemma}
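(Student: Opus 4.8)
The plan is to reduce everything to the two pointwise identities $\bar{\psi}_-(x)=\vex[f](x)$ and $\bar{\psi}_+(x)=\cav[f](x)$ for all $x\in T$; granting these, the claimed description of $X(f)$ is immediate from the characterization $X(f)=\SSet{(x,z)\in T\times\R}{\vex[f](x)\le z\le\cav[f](x)}$ established above. To prepare the ground I would first note that, since $\FF$ is finite, $X(f)=\conv\SSet{(\xi,f(\xi))}{\xi\in\FF}$, so that $(x,z)\in X(f)$ holds exactly when $x=\sum_{\xi\in\FF}\lambda_\xi\xi$ and $z=\sum_{\xi\in\FF}\lambda_\xi f(\xi)$ for some $\lambda$ in the nonempty compact polytope $\Lambda(x)\coloneqq\SSet{\lambda\in\R_{\ge 0}^{\FF}}{\sum_\xi\lambda_\xi=1,\ \sum_\xi\lambda_\xi\xi=x}$. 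Hence $\vex[f](x)$ and $\cav[f](x)$ are precisely the minimum and maximum of the linear functional $\lambda\mapsto\sum_\xi\lambda_\xi f(\xi)$ over $\Lambda(x)$. Note that no structural assumption on $f$ is used here, which is what makes the lemma work for an arbitrary $f\colon\FF\to\R$.

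By symmetry it then suffices to treat $\bar{\psi}_-$, and the crux is to show that the Zuckerberg value set $\SSet{\sum_{\xi\in\FF}\muE{U}(\LE{U}_\xi(S_1,\ldots,S_n))f(\xi)}{(S_1,\ldots,S_n)\in\ZE(x)}$ coincides with $\SSet{\sum_{\xi\in\FF}\lambda_\xi f(\xi)}{\lambda\in\Lambda(x)}$; their minima (and likewise, for $\bar{\psi}_+$, their maxima) then agree. For the inclusion $\subseteq$, I would fix $(S_1,\ldots,S_n)\in\ZE(x)$, observe that the defining condition $\varphiE{U}(t,S_1,\ldots,S_n)\in\FF$ forces $\bar{\FF}(S_1,\ldots,S_n)\subseteq\FF$, and then invoke the converse direction of the proof of \cref{thm:extended_zucker}: setting $\lambda_\xi\coloneqq\muE{U}(\LE{U}_\xi(S_1,\ldots,S_n))$ for $\xi\in\bar{\FF}(S_1,\ldots,S_n)$ and $\lambda_\xi\coloneqq 0$ otherwise yields $\lambda\in\Lambda(x)$, and since $\LE{U}_\xi(S_1,\ldots,S_n)=\emptyset$ off $\bar{\FF}(S_1,\ldots,S_n)$ the associated Zuckerberg value equals $\sum_\xi\lambda_\xi f(\xi)$. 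For the inclusion $\supseteq$ — which also shows $\ZE(x)\neq\emptyset$, so that $\bar{\psi}_\pm$ are genuinely well-defined on $T$ — I would take any $\lambda\in\Lambda(x)$, enumerate $\FF=\set{\xi^1,\ldots,\xi^r}$, and reuse the forward construction from the proof of \cref{thm:extended_zucker}: partition $U=I_1\cup\cdots\cup I_r$ into half-open intervals with $I_k$ of length $\lambda_{\xi^k}$ and put $S_i\coloneqq\bigcup_{k\in[r]:\,\xi^k_i\neq 0}(I_k,\xi^k_i)\in\LLE{U}$. Then $\muE{U}(S_i)=x_i$ and $\varphiE{U}(t,S_1,\ldots,S_n)=\xi^k$ on $I_k$, so $(S_1,\ldots,S_n)\in\ZE(x)$; moreover, as the $\xi^k$ are pairwise distinct, $\LE{U}_{\xi^k}(S_1,\ldots,S_n)=I_k$ and $\muE{U}(\LE{U}_{\xi^k}(S_1,\ldots,S_n))=\lambda_{\xi^k}$, so the Zuckerberg value is again $\sum_\xi\lambda_\xi f(\xi)$.

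Combining the two inclusions gives $\bar{\psi}_-=\vex[f]$ on $T$, and the identical argument with $\max$ in place of $\min$ gives $\bar{\psi}_+=\cav[f]$, which finishes the proof. The only step I expect to require genuine care is the bookkeeping in the correspondence between tuples in $\ZE(x)$ and coefficient vectors in $\Lambda(x)$ — in particular, verifying that $\muE{U}(\LE{U}_\xi(S_1,\ldots,S_n))$ really reproduces the coefficient $\lambda_\xi$. This hinges on two small but essential points: the elements of $\FF$ must be treated as pairwise distinct (otherwise $\LE{U}_\xi$ could merge intervals belonging to different representing points), and the constraint $\varphiE{U}(t,\cdot)\in\FF$ in the definition of $\ZE(x)$ is exactly what keeps $\bar{\FF}(S_1,\ldots,S_n)$ from leaking outside $\FF$. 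Everything else is a direct transcription of the proof of \cref{thm:extended_zucker} and its corollary.
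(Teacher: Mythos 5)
Your argument is correct, and its core ingredients coincide with the paper's: both directions recycle the forward and converse constructions from \cref{thm:extended_zucker}, translating between tuples in $\ZE(x)$ and convex multipliers for $x$, and the bookkeeping points you flag (distinctness of the $\xi^k$, the constraint $\varphiE{U}(t,\cdot)\in\FF$ keeping $\bar{\FF}(S_1,\ldots,S_n)$ inside $\FF$, and zero-measure fibres for unused vertices) are exactly the ones that matter. Where you diverge is in the logical organization. The paper argues directly about a point $(x,z)$: for $(x,z)\in X(f)$ it builds one tuple whose Zuckerberg value is exactly $z$, and for the converse it takes optimizer tuples for $\bar{\psi}_-(x)$ and $\bar{\psi}_+(x)$ and interpolates, $\lambda(\xi)=t\,\muE{U}(\LE{U}_\xi(S_1,\ldots,S_n))+(1-t)\,\muE{U}(\LE{U}_\xi(S'_1,\ldots,S'_n))$, to exhibit $(x,z)$ as a convex combination of the points $(\xi,f(\xi))$. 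You instead prove the pointwise identities $\bar{\psi}_-=\vex[f]$ and $\bar{\psi}_+=\cav[f]$ by showing the full value sets agree with $\bigl\{\sum_\xi\lambda_\xi f(\xi):\lambda\in\Lambda(x)\bigr\}$, and then invoke the description $X(f)=\SSet{(x,z)}{\vex[f](x)\le z\le\cav[f](x)}$ from the beginning of Section~6, so the interpolation is absorbed into the convexity of $\Lambda(x)$ and that fibre description. Your version buys a little extra: it makes attainment of the min/max in the definitions of $\bar{\psi}_\pm$ explicit (the paper's converse quietly assumes optimizers exist), it shows $\ZE(x)\neq\emptyset$ for all $x\in T$, and it delivers the envelope identities that \cref{cor:set_interpretation2} actually uses; the paper's version, in exchange, is self-contained in not leaning on the $\vex$/$\cav$ fibre characterization and produces the convex combination for $(x,z)$ explicitly.
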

\begin{proof}
	First, assume $ (x, z) \in X(f) $.
	This means
	\begin{equation*}
		(x, z) = \sum_{k = 1}^{\card{\FF}} \lambda_k (\xi^k, f(\xi^k))
	\end{equation*}
	for some $ \lambda_k \geq 0 $ for $ k = 1, \ldots, \card{\FF} $
	with $ \sum_{k = 1}^{\card{\FF}} \lambda_k = 1 $
	and a fixed ordering $ \xi^1, \ldots, \xi^{\card{\FF}} $ of $ \FF $.
	The sets $ S_i \in \LLE{U} $ with $ \muE{U}(S_i) = x_i $
	are defined exactly as in the proof of Theorem~\ref{thm:extended_zucker}:
	for the partition $ U = I_1 \cup \ldots \cup I_{\card{\FF}} $
	with $ I_1 = [0, \lambda_1) $
	and $ I_k = [\lambda_1 + \ldots + \lambda_{k - 1},
		\lambda_1 + \ldots + \lambda_k) $
	for $ k \in \set{2, \ldots, \card{\FF}} $, we set
	\begin{equation*}
		S_i = \bigcup_{k:\, \xi^k_i \neq 0}(I_k, \xi^k_i).
	\end{equation*}
	For every $ k = 1, \ldots, \card{\FF} $,
	we have $ \LLE{U}_{\xi^k}(S_1,\ldots, S_n) = I_k $,
	and consequently $ \muE{U}(\LLE{U}_{\xi^k}(S_1, \ldots, S_n)) = \lambda_k $.
	With
	\begin{equation*}
		z = \sum_{k = 1}^{\card{\FF}} \lambda_k \psi(\xi^k)
			= \sum_{k = 1}^{\card{\FF}} \muE{U}
				(\LLE{U}_{\xi^k}(S_1, \ldots, S_n)) \psi(\xi^k),
	\end{equation*}
	it follows that $ \bar{\psi}_-(x) \leq z \leq \bar{\psi}_+(x) $.
	
	For the converse, assume $ \bar{\psi}_-(x) \leq z \leq \bar{\psi}_+(x) $
	and let $ (S_1, \ldots, S_n) $ and $ (S'_1, \ldots, S'_n) $
	be optimizers for the minima and maxima defining $ \bar{\psi}_-(x) $
	and $ \bar{\psi}_+(x) $ respectively.
	We write $ z = t\bar{\psi}_-(x) + (1 - t) \bar{\psi}_+(x) $
	for some $ t \in [0, 1] $ and set
	\begin{equation*}
		\lambda(\xi) \coloneqq t\muE{U}(\LLE{U}_{\xi}(S_1, \ldots, S_n))
			+ (1 - t) \muE{U}(\LLE{U}_{\xi}(S'_1, \ldots, S'_n))
	\end{equation*}
	for all $ \xi \in \FF $.
	This yields a representation of $ (x, z) $ as the convex combination
	\begin{align*}
		(x, z) = \sum_{\xi\in\FF} \lambda(\xi)(\xi, \psi(\xi)).
		\\[-2\baselineskip]
	\end{align*}
\end{proof}
With \cref{thm:zucker_function2},
we can express the convex hull of the graph of a function
via functions defined over Zuckerberg sets.
We will now make these abstract expressions
more concrete for the case of bilinear functions.
For this purpose, we consider an arbitrary bilinear function
\begin{equation}
	\func{f}{\FF}{\R}, \quad f(x) = \sum_{ij \in E}^k a_{ij} x_i x_j
	\label{Eq:Bilinear_Function}
\end{equation}
with coefficients $ a_{ij} \in \R $
and a subset $ E \subseteq \set{(i, j) \in \N^2 \mid 1 \leq i < j \leq n } $.
Furthermore, we define the function
$ \func{M}{(\LLE{U})^2}{\R} $ in \cref{Fig:M},
which measures a kind of generalized overlap between two sets $ S_1, S_2 \in \LLE{U} $.
\begin{figure}[h]
	\begin{algorithmic}[1]
		\Function{$M$}{$ S_1, S_2 $}
		\State $ r \leftarrow 0 $
		\State Let $ (R^1_1, \ldots R^1_l) $ be any ordering
			of the rectangles defining $ S_1 $
		\State Let $ (R^2_1, \ldots, R^2_p) $ be any ordering
			of the rectangles defining $ S_2 $		
		\For{$ R^1 \in (R^1_1, \ldots, R^1_l) $}
			\For{$ R^2 \in (R^2_1, \ldots, R^2_p) $}
				\State $ g \leftarrow \SSet{(t, 1) \in U \times \set{1}}
					{y(t, R^1) = 1, y(t, R^2) = 1} $
				\State $ r \leftarrow r + z(R^1) z(R^2) \muE{U}(g) $
			\EndFor
		\EndFor
		\State \Return{$r$}
		\EndFunction
	\end{algorithmic}
	\centering
	\caption{The function $ \func{M}{(\LLE{U})^2}{\R} $}
	\label{Fig:M}
\end{figure}
For the particular case of a bilinear function~$f$,
\cref{thm:zucker_function2} yields the following characterizations
of $ \cav[f](x) $ and $ \vex[f](x) $.
\begin{theorem}
	\label{cor:set_interpretation2}
	For the bilinear function~$f$ of the form~\cref{Eq:Bilinear_Function},
	we have
	\begin{align*}
		\vex[f](x) &= \min\SSet{\sum_{ij \in E} a_{ij} M(S_i, S_j)}
			{(S_1, \ldots, S_n) \in \ZE(x)},\\
		\cav[f](x) &= \max\SSet{\sum_{ij \in E} a_{ij} M(S_i, S_j)}
			{(S_1, \ldots, S_n) \in \ZE(x)}
	\end{align*}
	for all $ x \in T $.
	In particular, for a polytope $ P\subseteq \R^{n(n + 1) / 2} $
	with $ \pi[f](P) \subseteq X(f) $, we have $ \pi[f](P) = X(f) $
	if and only if for every $ x \in T $ there are sets
	$ (S_1, \ldots, S_n) \in \ZE(x) $ and $ (S'_1, \ldots, S'_n) \in \ZE(x) $ with
	\begin{align*}
		\sum_{ij \in E} a_{ij} M(S_i, S_j) &= \LB_P[f](x),\\
		\sum_{ij \in E} a_{ij} M(S'_i, S'_j) &= \UB_P[f](x).
	\end{align*}
\end{theorem}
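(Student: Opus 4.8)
The plan is to reduce \cref{cor:set_interpretation2} to \cref{thm:zucker_function2} by showing that for a bilinear function $f$ of the form \cref{Eq:Bilinear_Function} and any tuple of admissible sets $(S_1,\ldots,S_n)\in\ZE(x)$, the Zuckerberg-style expression appearing there, namely $\sum_{\xi\in\FF}\muE{U}(\LE{U}_\xi(S_1,\ldots,S_n))f(\xi)$, equals $\sum_{ij\in E}a_{ij}M(S_i,S_j)$. Once this identity is established, the two displayed equations for $\vex[f]$ and $\cav[f]$ follow immediately, since $\bar\psi_-$ and $\bar\psi_+$ are defined exactly as these minima and maxima over $\ZE(x)$; and the "in particular" part is then just the translation of $X(f)=\SSet{(x,z)\in T\times\R}{\bar\psi_-(x)\le z\le\bar\psi_+(x)}$ (from \cref{thm:zucker_function2}) together with the formula $\pi[f](P)=\SSet{(x,z)\in T\times\R}{\LB_P[f](x)\le z\le\UB_P[f](x)}$: equality of the two slabs over all $x\in T$ is equivalent to $\vex[f]=\LB_P[f]$ and $\cav[f]=\UB_P[f]$ on $T$, and achieving these values is exactly the stated condition on the existence of optimizing sets.

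So the heart of the argument is the identity $\sum_{\xi\in\FF}\muE{U}(\LE{U}_\xi(S_1,\ldots,S_n))f(\xi)=\sum_{ij\in E}a_{ij}M(S_i,S_j)$. First I would unfold the left-hand side using $f(\xi)=\sum_{ij\in E}a_{ij}\xi_i\xi_j$ and swap the sums, so it suffices to prove, for each pair $ij\in E$, that
\begin{equation*}
	\sum_{\xi\in\FF}\muE{U}(\LE{U}_\xi(S_1,\ldots,S_n))\,\xi_i\xi_j = M(S_i,S_j).
\end{equation*}
For the left-hand side here, note that $\LE{U}_\xi(S_1,\ldots,S_n)=\SSet{t\in U}{\varphiE{U}(t,S_1,\ldots,S_n)=\xi}$, and on this set the value $\xi_i\xi_j$ equals $\phiE{U}(t,S_i)\,\phiE{U}(t,S_j)=z(R_i(t))\,z(R_j(t))$, where $R_i(t)$ is the unique rectangle of $S_i$ active at $t$ (or $0$ if none). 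Since the sets $\LE{U}_\xi$ partition $U$ (up to the set where some coordinate of $\varphiE{U}$ is $0$, which contributes nothing), the sum collapses to
\begin{equation*}
	\int_{U}\phiE{U}(t,S_i)\,\phiE{U}(t,S_j)\,\mathrm{d}t .
\end{equation*}
On the other hand, writing $S_i=\set{R^1_1,\ldots,R^1_l}$ and $S_j=\set{R^2_1,\ldots,R^2_p}$ as unions of pairwise non-overlapping rectangles, at any $t\in U$ we have $\phiE{U}(t,S_i)=\sum_{a}z(R^1_a)\,y(t,R^1_a)$ and likewise for $S_j$, so the integrand expands as $\sum_{a,b}z(R^1_a)z(R^2_b)\,y(t,R^1_a)y(t,R^2_b)$; integrating term by term against Lebesgue measure gives precisely $\sum_{a,b}z(R^1_a)z(R^2_b)\,\muE{U}(\SSet{(t,1)\in U\times\set{1}}{y(t,R^1_a)=1,y(t,R^2_b)=1})$, which is exactly the accumulator returned by the routine $M$ in \cref{Fig:M}. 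This proves the pairwise identity and hence the claim.

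The main obstacle, and the only place requiring genuine care, is the bookkeeping around the "$0$" values: when some coordinate $\phiE{U}(t,S_i)$ is $0$ there is no active rectangle, the corresponding point $t$ need not lie in any $\LE{U}_\xi$ with $\xi\in\FF$, and one must check that such $t$ contribute zero to both sides (which they do, since $z=0$ makes the product vanish, and $M$ only ever sums products of genuine rectangle heights). One should also confirm that $\varphiE{U}(t,S_1,\ldots,S_n)\in\FF$ for all $t\in U$ — guaranteed by $(S_1,\ldots,S_n)\in\ZE(x)$ — so that the outer sum over $\xi\in\FF$ really does capture the full contribution. Beyond this, everything is the same measure-theoretic manipulation (Fubini / linearity of the integral over finitely many rectangles) already used implicitly in the proofs of \cref{thm:extended_zucker} and \cref{thm:zucker_function2}, so no new ideas are needed; the statement is essentially the specialization of \cref{thm:zucker_function2} obtained by making the abstract envelope functions $\bar\psi_\pm$ concrete for bilinear $f$.
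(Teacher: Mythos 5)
Your proposal follows essentially the same route as the paper: the paper's proof is exactly the computation of unfolding $f(\xi)=\sum_{ij\in E}a_{ij}\xi_i\xi_j$, swapping the sums, and identifying $\sum_{\xi\in\FF}\muE{U}(\LE{U}_\xi(S_1,\ldots,S_n))\,\xi_i\xi_j$ with $M(S_i,S_j)$, after which everything follows from \cref{thm:zucker_function2} just as you describe. Your additional justification of the final identity via expanding both sets into their non-overlapping rectangles (and noting that points with a vanishing coordinate contribute zero to both sides) is a correct and slightly more detailed account of the step the paper leaves implicit.
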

\begin{proof}
	We observe that
	\begin{align*}
		\sum_{\xi \in \FF} & \muE{U} (\LLE{U}_{\xi}(S_1, \ldots, S_n)) f(\xi)
			= \sum_{\xi \in \FF} \muE{U} (\LLE{U}_{\xi}(S_1, \ldots, S_n))
				\sum_{ij \in E} a_{ij} \xi_i \xi_j\\
			&= \sum_{ij \in E} a_{ij} \sum_{\xi \in \FF} \muE{U}
				(\LLE{U}_\xi(S_1, \ldots, S_n)) \xi_i \xi_j\\
			&= \sum_{ij \in E} a_{ij} M(S_i, S_j).\\[-2\baselineskip]
	\end{align*}
\end{proof}
\cref{cor:set_interpretation2} allows us to give compact representations
for $ \vex[f] $ and $ \cav[f] $ over polytopal domains.
To do so, we need to design suitable sets $ S_1, \ldots, S_n \in \ZE $
such that $ \sum_{ij \in E} a_{ij} M(S_i, S_j) $ is minimized
and $ S'_1, \ldots, S'_n \in \ZE $
such that $ \sum_{ij \in E} a_{ij} M(S'_i, S'_j) $ is maximized,
both for an arbitrary point $ x \in \R^n $.

\subsubsection{Convex-hull proofs for non-$0$/$1$ domain}

We can use \cref{cor:set_interpretation2} to prove again
that $ X(f) $ for $ f(x_1, x_2) = x_1 x_2 $
is given by the McCormick-inequalities.
However, we will now do this over the bounds $ 0 \leq x_1 \leq u_1 $
and $ 0 \leq x_2 \leq u_2 $ for $ u_1, u_2 \geq 0 $
instead of the unit bounds as in \cref{sec:mc-cormick1}.
Let
\begin{equation*}
	P \coloneqq \SSet{(x_1, x_2, z) \in \R^3}
		{z \leq u_2 x_1, z \leq u_1 x_2, z \geq 0,
			z \geq u_2 x_1 + u_1 x_2 - u_1 u_2}.
\end{equation*}
The direction $ \pi[f](P) \subseteq X(f) $ can easily be verified.

For the reverse direction, we conclude from
\begin{equation}
	M(S_1, S_2) \leq u_1 u_2 \min\lrset{\frac{1}{u_1} x_1, \frac{1}{u_2} x_2}
	\label{mc-cormick2}
\end{equation}
that $ \cav[f](x) \leq \set{(1 / u_1) x_1, (1 / u_2) x_2} $,
and with $ S_1 = ([0, x_1/u_1), u_1) $, $ S_2=([0, x_2/u_2), u_2) $
this bound is attained for all $ x_1 \in [0, u_1] $, $ x_2 \in [0, u_2] $.
Therefore, the concave envelope is given by $ z \leq u_2 x_1 $ and $ z \leq u_1 x_2 $.
In a similar fashion, it follows from
\begin{equation}
	M(S_1, S_2) \geq u_1 u_2 \max\lrset{0, \frac{1}{u_1} x_1 + \frac{1}{u_2} x_2 - 1}
	\label{mc-cormick3}
\end{equation}
that $ \vex[f](x) \geq \max\set{0, (1 / u_1) x_1 + (1 / u_2) x_2 - 1} $.
By choosing the sets $ S_1 = ([0, x_1 / u_1), u_1) $, $ S_2 = ([1 - x_2 / u_2, 1), u_2) $
we can show that this bound is attained for all $ x_1 \in [0, u_1] $, $ x_2 \in [0, u_2] $.
Thus, the convex envelope is given by $ z \geq 0 $ and $ z \geq u_2 x_1 + u_1 x_2 - u_1 u_2 $.

\section{Conclusion}
We have presented a vastly simplified framework
for Zuckerberg's geometric proof technique for convex-hull results.
By restating the method in terms of our notion of set characterizations,
we were able to accomplish several benefits.
Firstly, we have identified three major strategies
one can pursue in Zuckerberg-type convex-hull proofs.
This underlines the high flexibility in devising algorithmic schemes the method offers.
Secondly, we have significantly extended the expressive power of Zuckerberg's technique
by basing it on a different underlying subset algebra.
It can now be used to characterize the convex hulls of general convex sets,
including, but not limited to integer polyhedra.
Using this extension, we give characterizations of the convex hull
of Boolean and bilinear functions over polytopal domains.
Finally, we have given a variety of indicative examples for the use of our framework
with the intention to convey the ideas as hands-on as possible.

We find it a very interesting avenue for future research
to develop further algorithmic strateg-ies for Zuckerberg proofs
and to extend the scope of those we have introduced.
For example, one could not only consider linear programs
but pass to (mixed-)integer ones when following the technique using feasibility subproblems.
This might entail the consideration of minimally infeasible subsystems
to verify the set characterizations.

Altogether, we make a strong case for the canonization of Zuckerberg's proof technique
in standard text books on integer programming and polyhedral combinatorics.
We are certain it will enable many more interesting convex-hull results in the future.

\section*{Acknowledgements}
\label{sec:acknowledgements}

We thank Alexander Martin and
Thomas Kalinowski for our fruitful discussions on the topic.
Futhermore, we acknowledge financial support
by the Bavarian Minis\-try of Economic Affairs, Regional Development and Energy
through the Center for Analytics -- Data -- Applications (ADA-Center)
within the framework of \qm{BAYERN DIGITAL~II}.

\bibliographystyle{alpha}
\bibliography{Literature}

\section{Online supplement}
In this online supplement, we give several further convex-hull proofs
as examples for our concept of set characterizations
and our convex extension of Zuckerberg's proof scheme in practice.

We start by giving further examples of convex-hull proofs
using the greedy proof strategy from \cref{sec:shortest-path}.
In \cref{Sec:Staircase}, we consider CPMC under staircase compatibility,
which is a second polynomial-time solvable subcase of CPMC.
Then we treat the stable-set problem on bipartite graphs
in \cref{Sec:Bipartite_Stable_Set}.
This is followed by two further examples for the use
of our extended Zuckerberg proof scheme in mixed-integer problems.
These are mixed-integer models for piecewise linear functions in \cref{Sec:Piecewise}
and the total unimodularity of interval matrices in \cref{Sec:TU_Interval}.
Finally, in \cref{sec:proofs-graph},
we give the proof for \cref{cor:set_interpretation3} from \cref{Sec:Boolean_Extension}.

\subsection{CPMC under staircase compatibility}
\label{Sec:Staircase}

Staircase compatibility is a special case of CPMC
which arises especially in scheduling applications
with precedence constraints (see \eg \cite{schwindt2015handbook,benders2020}),
but also when considering flow problems with piecewise linear costs, for example
(\cite{liers2016structural}).
Here, each subset of nodes $ V_i \in \VV $ is equipped with a total order $ <_i $,
which we assume to be the case in the following.
\begin{definition}(Subgraphs~$ \Gij $, staircase ordering, staircase partition)
	For any two subsets~$ V_i, V_j \in \VV $ with $ i \neq j $,
	we write
	\begin{equation*}
		\Gij \coloneqq (V_i \cup V_j, \Eij)
	\end{equation*}
	for the subgraph of~$G$ induced by $ V_i \cup V_j $,
	where $ \Eij $ is the corresponding edge set.
	Note that all subgraphs~$ \Gij $ are bipartite.
		
	For simplicity, we refer to the set $ \set{<_1, \ldots, <_m} $
	of total orders as an \emph{ordering} on~$G$ if~$ \VV $ is clear from the context.
	An ordering on~$G$ is \emph{staircase} if for all subgraphs~$ \Gij $ of~$G$
	the two conditions
	\begin{equation}
		\begin{split}
			u \in V_i \wedge v_1 <_j v_2 <_j v_3 \in V_j
				&\wedge \set{u, v_1}, \set{u, v_3} \in \Eij\\
				&\Rightarrow \set{u, v_2} \in \Eij,
		\end{split}
		\tag{SC1}
		\label{Eq:SC1}
	\end{equation}
	\begin{equation}
		\begin{split}
			u_1 <_i u_2 \in V_i \wedge v_1 <_j v_2 \in V_j
				&\wedge \set{u_1, v_2}, \set{u_2, v_1} \in \Eij\\
				&\Rightarrow \set{u_1, v_1}, \set{u_2, v_2} \in \Eij
		\end{split}
		\tag{SC2}
		\label{Eq:SC2}
	\end{equation}
	hold.
	We call $ \VV $ \emph{staircase} if there exists a staircase ordering on $ \VV $.
	\label{def:staircase}
\end{definition}
Condition~\cref{Eq:SC1} ensures that the neighbourhoods of all vertices are continuous
with respect to the total order,
whereas~\cref{Eq:SC2} yields a kind of monotonicity on the edge set $ E_{ij} $.
The two conditions are illustrated in Figure~\ref{fig:def_staircase}.
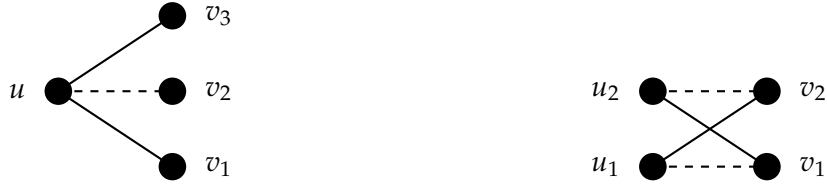
\begin{figure}[h]
	\begin{subfigure}[b]{0.48\textwidth}
		\centering
		\begin{tikzpicture}
			\draw[thick] (0,1) -- (1.5,2);
			\draw[thick, dashed] (0,1) -- (1.5,1);
			\draw[thick] (0,1) -- (1.5,0);
	
			\draw node[fill, circle, label={[label distance=3]180:{$u$}}] at (0,1) {};
			\draw node[fill, circle, label={[label distance=3]0:{$v_1$}}] at (1.5,0) {};
			\draw node[fill, circle, label={[label distance=3]0:{$v_2$}}] at (1.5,1) {};
			\draw node[fill, circle, label={[label distance=3]0:{$v_3$}}] at (1.5,2) {};
		\end{tikzpicture}
	\end{subfigure}
	\begin{subfigure}[b]{0.48\textwidth}
		\centering
		\begin{tikzpicture}
			\draw[thick] (0,0) -- (1.5,1);
			\draw[thick] (0,1) -- (1.5,0);
			\draw[thick, dashed] (0,0) -- (1.5,0);
			\draw[thick, dashed] (0,1) -- (1.5,1);
	
			\draw node[fill, circle, label={[label distance=3]180:{$u_1$}}] at (0,0) {};
			\draw node[fill, circle, label={[label distance=3]180:{$u_2$}}] at (0,1) {};
			\draw node[fill, circle, label={[label distance=3]0:{$v_1$}}] at (1.5,0) {};
			\draw node[fill, circle, label={[label distance=3]0:{$v_2$}}] at (1.5,1) {};
		\end{tikzpicture}
	\end{subfigure}
	\caption{Illustration of the two staircase conditions
		\cref{Eq:SC1} (left) and \cref{Eq:SC2} (right).
			If the solid edges are contained in the graph,
			the dashed ones must be contained as well.}
	\label{fig:def_staircase}
\end{figure}
The name staircase compatibility is motivated by the fact
that the node adjacency matrices corresponding to the bipartite graphs~$ \Gij $
feature a completely dense staircase form
if the rows and columns are arranged according to the orders~$ <_i $ and~$ <_j $.
The term \emph{completely dense} refers to the ones in each row
forming a consecutive block not interrupted by any zeroes.
It is easy to see that~\eqref{Eq:SC1}
is implied by~\eqref{Eq:SC2} if $ \Gij $ does not contain vertices with degree zero.
The problem of recognizing the partition~$ \VV $ to be staircase
is addressed in \cite{staircase2020}.

As the authors of \cite{staircase2018} have shown,
the CPMC polytope can be represented via a totally unimodular system
of polynomial size if the considered instance of CPMC has the staircase property.
In the following, we will give an alternative, shorter proof
for their convex-hull result by using Zuckerberg's method.
To facilitate notation, let
$ \min(v, V_j) \coloneqq \min\set{w \in V_j \mid \set{v, w} \in E} $
denote the smallest element in~$ V_j $
which is compatible to a given $ v \in V \setminus V_j $.
\begin{theorem}(\cite[Proposition~3.2 + Theorem~3.4]{staircase2018})
	Let $ P(G, \VV) $ be the CPMC polytope as introduced in \cref{Thm:CPMCF}.
	If the partition $ \VV $ is staircase,
	then $ P(G, \VV) $ is completely described by the constraints
	\begin{alignat}{1}
		\sum_{v \in V_i} x_v &= 1 \quad \forall V_i \in \VV, \label{Eq:MC}\\
		\sum_{\substack{u \in V_i:\\u \geq_i v}} x_u
			&\leq \sum_{\substack{w \in V_j:\\w \geq_j \min(v, V_j)}} x_w
			\quad \forall V_i \in \VV, \forall v \in V_i, \forall V_j \in \VV, j \neq i,
			\label{Eq:Comp}\\
		x &\geq 0. \label{Eq:NN}
	\end{alignat}
	Moreover, the constraint matrix of system~\cref{Eq:MC,Eq:Comp,Eq:NN}
	is totally unimodular.
\end{theorem}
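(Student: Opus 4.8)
The plan is to establish $P(G,\VV) = H$, where $H \coloneqq \set{x \in \R_{\ge 0}^{\card{V}} \mid \cref{Eq:MC,Eq:Comp}}$, by Zuckerberg's method in the form of \cref{Lem:Set_Characterization}, and then to derive total unimodularity via the Hoffman--Kruskal characterization \cref{hoffi:thm}. The inclusion $P(G,\VV) \subseteq H$ is routine: the incidence vector of an $m$-clique meets each $V_i$ in exactly one node, so \cref{Eq:MC} holds; and if it selects $u^\star \in V_i$ and $w^\star \in V_j$ with $\set{u^\star,w^\star} \in E$, then $u^\star \ge_i v$ implies $w^\star \ge_j \min(u^\star,V_j) \ge_j \min(v,V_j)$ — the last inequality being exactly staircase condition \eqref{Eq:SC2} applied to $v <_i u^\star$ — which yields \cref{Eq:Comp}.

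For the reverse inclusion I would first read off the set characterizations (cf.\ \cref{tb:set-char}): \cref{Eq:MC} becomes $\card{\set{v \in V_i \mid t \in S_v}} = 1$ for all $V_i \in \VV$ and $t \in U$; \cref{Eq:Comp} becomes $\card{\set{u \in V_i \mid u \ge_i v,\ t \in S_u}} \le \card{\set{w \in V_j \mid w \ge_j \min(v,V_j),\ t \in S_w}}$ for all admissible triples $(V_i,v,V_j)$ and all $t \in U$; and \cref{Eq:NN} carries no characterization of its own, being implied by $S_v \subseteq U$. Given $h \in H$, the construction is the direct, greedy one: for each part $V_i$, place the sets for its nodes as consecutive half-open intervals of $U$ whose lengths are the corresponding entries of $h$, ordered by $<_i$ (that is, apply \textsc{Match} to $[0,1)$ and this ordered list), and do so independently for every $i$. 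By \cref{Eq:MC} the entries $h_v$, $v \in V_i$, sum to $1$, so these intervals tile $U$; hence $\mu(S_v) = h_v$ for all $v$ and the first set characterization holds. Moreover, for $v \in V_i$ one has $\bigcup_{u \in V_i,\, u \ge_i v} S_u = [a_v, 1)$ with $a_v \coloneqq \sum_{u \in V_i,\, u <_i v} h_u$, so the set characterization of \cref{Eq:Comp} is equivalent to $a_v \ge a_{\min(v,V_j)}$ (the latter prefix sum now taken in $V_j$), which is precisely \cref{Eq:Comp} after subtracting both sides from $1$.

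The crux — and the step I expect to be the main obstacle — is to verify that these sets form a genuine Zuckerberg certificate, i.e.\ that $\varphi(t, S_1, \ldots, S_n) \in \FF$ for every $t \in U$; the only nontrivial point is that the node $u \in V_i$ with $t \in S_u$ and the node $w \in V_j$ with $t \in S_w$ are adjacent whenever $i \neq j$. Since $t \in S_u$ forces $t \ge a_u$ and $t \in S_w$ forces $t \ge a_w$, applying \cref{Eq:Comp} to the triple $(V_i, u, V_j)$ gives $a_u \ge a_{\min(u,V_j)}$, hence $t \ge a_{\min(u,V_j)}$, so the selected node in $V_j$ satisfies $w \ge_j \min(u, V_j)$; applying \cref{Eq:Comp} symmetrically to $(V_j, w, V_i)$ gives $u \ge_i \min(w, V_i)$. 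Now $\set{\min(w,V_i), w} \in E$ and $\set{u, \min(u,V_j)} \in E$ by definition; if either of the two inequalities is tight we are done, and otherwise staircase condition \eqref{Eq:SC2} applied with $u_1 = \min(w,V_i) <_i u_2 = u$ and $v_1 = \min(u,V_j) <_j v_2 = w$ forces $\set{u,w} \in E$. (Condition \eqref{Eq:SC1} ensures that neighbourhoods are order-intervals, so $\min(v,V_j)$ is the correct reference point; an isolated $u \in V_i$, for which $\min(u,V_j)$ is undefined, is pinned to $0$ by \cref{Eq:Comp} with empty right-hand side and causes no trouble.) With \cref{Lem:Set_Characterization} this proves $H \subseteq P(G,\VV)$, and \cref{Cor:Convex_Combinations} moreover exhibits the spanning cliques of any $h \in H$ explicitly.

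For the total-unimodularity statement I would, in the spirit of \cref{inci:thm}, re-run the construction with an arbitrary integral right-hand side: passing from intervals in $U$ to rectangles of suitable height as in \cref{sec:zuckerberg_extended} (the heights absorbing integer parts), one shows that for every integral $b$ the polyhedron $\set{x \ge 0 \mid A x \le b}$ — with $A$ collecting the rows of \cref{Eq:MC,Eq:Comp} — has only integral vertices, the interval placement and the staircase argument of the previous paragraph carrying over essentially verbatim; one then invokes \cref{hoffi:thm} together with the invariance of total unimodularity under transposition and under adjoining $\pm$ unit rows. A more elementary alternative is the unimodular change of variables to the tail sums $y^i_v = \sum_{u \in V_i,\, u \ge_i v} x_u$, under which \cref{Eq:MC,Eq:Comp,Eq:NN} turns into a difference-constraint system whose matrix is seen directly to be a digraph incidence matrix; transferring integrality back through the substitution is where the accounting lives. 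Either way, I expect the set-placement verification of the third paragraph — the interplay of \cref{Eq:Comp} with \eqref{Eq:SC1} and \eqref{Eq:SC2} — to remain the conceptual heart, the total-unimodularity extension being largely a matter of careful bookkeeping.
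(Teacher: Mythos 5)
Your convex-hull argument is essentially the paper's own proof: the paper also places, for each part $V_i$, the intervals $S_v = [\sum_{u <_i v} h_u, \sum_{u \le_i v} h_u)$ via \textsc{Match} in the order $<_i$ (routine \textsc{Define-CPMCS-Subsets}), notes that \cref{Eq:MC} makes each part tile $U$ and that \cref{Eq:Comp} gives the tail-containments $\bigcup_{u \ge_i v} S_u \subseteq \bigcup_{w \ge_j \min(v,V_j)} S_w$. What you add is the explicit verification that the nodes selected at a common $t$ are pairwise adjacent — applying \cref{Eq:Comp} in both directions to get $w \ge_j \min(u,V_j)$ and $u \ge_i \min(w,V_i)$ and then invoking \eqref{Eq:SC2}; the paper leaves exactly this step implicit, so your write-up is a more complete version of the same proof, and your handling of the forward inclusion (monotonicity of $\min(\cdot,V_j)$ via \eqref{Eq:SC2}) and of nodes with empty neighbourhood is correct. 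The one place where you fall short of the statement is total unimodularity: the paper does not reprove it either (it simply cites Theorem~3.4 of the staircase reference), but your sketch is not yet a proof. In particular, the claim that the interval/rectangle placement \qm{carries over essentially verbatim} to arbitrary integral right-hand sides is doubtful: with general $b$ the rows of \cref{Eq:Comp}, which have $+1$ on a tail of $V_i$ and $-1$ on a tail of $V_j$, no longer correspond to any clique-type combinatorial structure, the parts no longer tile $U$, and it is not clear how a greedy placement of rectangles should simultaneously satisfy all tail-difference characterizations pointwise — this would need a genuinely new construction, not bookkeeping. Your alternative route (the unimodular change of variables to tail sums $y^i_v$, turning \cref{Eq:MC,Eq:Comp,Eq:NN} into a difference-constraint system, followed by \cref{hoffi:thm}) is the standard and viable way to get the TU claim, but as written it is only an outline.
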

\begin{proof}
	First, observe that any $ x \in P(G, \VV) $
	fulfils constraints~\cref{Eq:MC,Eq:NN} by definition.
	It also fulfils constraint~\cref{Eq:Comp}
	as the left-hand side it produces is never bigger than~$1$
	and choosing some node in a set~$ V_i \in \VV $
	requires the choice of a compatible element in all other $ V_j \in \VV $.
	
	To prove the convex-hull property,
	let $ h \in \R^{\card{V}} $ be a point fulfilling~\cref{Eq:MC,Eq:Comp,Eq:NN}.
	We define a subset $ S_v \subseteq U$ for each node $ v \in V $
	as described in routine \textsc{Define-CPMCS-Subsets} in~\cref{Fig:CPMCS}.
	Observe that the subroutine \textsc{Match} precisely sets these subsets
	such that $ S_v = [\sum_{u \in V_i: u <_i v} x_u,
		\sum_{u \in V_i: u \leq_i v} x_u] $ holds
	for all $ v \in V_i $, $ V_i \in \VV $.
	Now consider an arbitrary subgraph~$ \Gij $ of~$G$.
	Constraint~\cref{Eq:MC} then ensures $ \bigcup_{v \in V_i} S_v
		= \bigcup_{w \in V_j} S_w = 1 $.
	Further, \cref{Eq:Comp} implies $ \bigcup_{u \in V_i:\, u \geq_i v} S_u
		\subseteq \bigcup_{w \in V_j:\, w \geq_j \min(v, V_j)} S_w $
	for all $ v \in V_i $.
	
	Finally, the total unimodularity is shown in \cite[Theorem~3.4]{staircase2018}.
\end{proof}
\begin{figure}[h]
	\null\vfill\null
	\begin{algorithmic}[1]
		\Function{Define-CPMCS-Subsets}{}
		\For{$ V_i \in \VV $}
			\State Let $ (i_1, \ldots, i_p) $ be the ordering
				of the elements in $ V_i $ according to $ <_i $
			\State $ (\S_{i_1}, \ldots, \S_{i_p}) \coloneqq
				\textsc{Match}([0, 1), (h_{i_1}, \ldots, h_{i_p})) $
		\EndFor
		\EndFunction
	\end{algorithmic}
	\caption{Routine \textsc{Define-CPMCS-Sets}}
	\label{Fig:CPMCS}
	\null\vfill\null
\end{figure}

\subsection{The stable-set problem on a bipartite graph}
\label{Sec:Bipartite_Stable_Set}

We briefly recapitulate an example from \cite{zuckerberg2016geometric},
namely a proof that the stable-set problem on a bipartite graph~$ G = (V, E) $
is completely described by the stable-set inequalities.
Our framework of set characterizations allows us
to present it in a more concise form than previously possible.
\begin{theorem}
	Let $ P(G) \coloneqq \set{x \in \F^{\card{E}} \mid
		\cref{stab:set-inq}} $
	be the stable-set polytope,
	and let $ H(G) \coloneqq \set{x \in \FR^{\card{E}} \mid
		\cref{stab:set-inq}} $
	be its stable-set relaxation.
	Then we have $ P = H $.
\end{theorem}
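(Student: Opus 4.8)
The inclusion $P(G) \subseteq H(G)$ is immediate, so the real content is $H(G) \subseteq P(G)$, which I would establish via \cref{Lem:Set_Characterization}. Since $\FF \subseteq H(G)$ holds trivially, it suffices to produce, for every $h \in H(G)$, sets $S_v \in \LL$ for $v \in V$ with $\mu(S_v) = h_v$ that satisfy the set characterizations of the defining constraints. By \cref{tb:set-char}, the set characterization of an edge inequality $x_i + x_j \leq 1$ is $S_i \cap S_j = \emptyset$; hence the task reduces to finding a family $(S_v)_{v \in V}$ of half-open subintervals of $U = [0,1)$ with the prescribed measures such that $S_i \cap S_j = \emptyset$ for every $\{i,j\} \in E$.

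The plan is to exploit bipartiteness by anchoring the two colour classes at opposite ends of $U$. Let $W$ and $Y$ be the two sides of $G$, and set
\[
  S_y \coloneqq [0, h_y) \ \ (y \in Y), \qquad S_w \coloneqq [1 - h_w, 1) \ \ (w \in W).
\]
Because $0 \le h_v \le 1$ for all $v$ (this is part of the description of $H(G)$), each $S_v$ is a half-open subinterval of $U$ and thus lies in $\LL$, and clearly $\mu(S_v) = h_v$. It then remains to verify the disjointness conditions. Any edge $\{w,y\} \in E$ joins a node $w \in W$ to a node $y \in Y$, so $S_w \cap S_y = [1 - h_w, 1) \cap [0, h_y)$; this intersection is non-empty precisely when $1 - h_w < h_y$, i.e.\ when $h_w + h_y > 1$, which is ruled out by the edge inequality $h_w + h_y \le 1$ valid at $h \in H(G)$. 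Hence $S_w \cap S_y = \emptyset$ for all edges, all set characterizations hold, and \cref{Lem:Set_Characterization} gives $H(G) \subseteq P(G)$, completing the proof.

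There is essentially no hard step here; the only thing one has to see is that a \emph{single} global choice of anchoring works simultaneously for all edges, which is exactly what bipartiteness guarantees: there are no edges inside $W$ or inside $Y$, so one never needs two sets anchored at the same end of $U$ to be disjoint. The degenerate case $h_v = 0$, which makes $S_v = \emptyset$, causes no difficulty. If one wishes, one may additionally invoke \cref{Cor:Convex_Combinations} to read off from this construction an explicit convex combination of incidence vectors of stable sets that spans $h$, matching the style of the earlier examples in the paper.
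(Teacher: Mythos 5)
Your proof is correct and coincides with the paper's own argument: the supplement's routine \textsc{Define-Bipartite-Stable-Set-Subsets} makes exactly your choice $S_u = [0,h_u)$ on one colour class and $S_w = [1-h_w,1)$ on the other, and disjointness across each edge follows from the edge inequality just as you argue. Your explicit verification that $S_w \cap S_y \neq \emptyset$ would force $h_w + h_y > 1$ is the same (slightly more spelled-out) reasoning the paper leaves implicit.
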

\begin{proof}
	It is obvious that $ P \subseteq H $.
	For the converse, we consider the set characterization of \cref{stab:set-inq}
	given by
	\begin{alignat}{1}
		S_i \cap \S_j = \emptyset &\quad \forall (i, j) \in E.\label{stabelset:char1}
	\end{alignat}
	Further, let $U$ and $W$ be the two bipartite subsets of~$V$.
	For each point $ h \in H $,
	we then need to find sets $ S_e $ for all $ e \in E $ for each point $ h \in H $
	such that $ \mu(S_e) = h_e $ and the above condition holds.
	These sets are defined in routine \textsc{Define-Bipartite-Stable-Sets-Subsets}
	given in \cref{fig:stable-set}.
	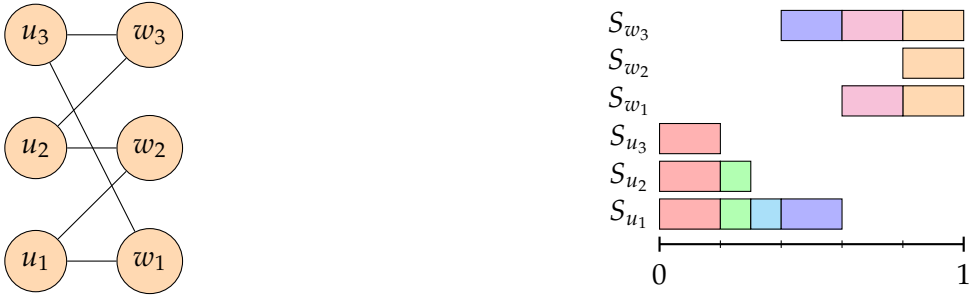
\begin{figure}
		\centering
		\begin{subfigure}{\textwidth}
			\begin{algorithmic}[1]
				\Function{Define-Bipartite-Stable-Set-Subsets}{}
					\For{each $ u \in U $}
						\State $ S_u \coloneqq [0, h_u) $
					\EndFor
					\For{each $ w \in W $}
						\State $ S_w \coloneqq [1 - h_w, 1) $
					\EndFor
				\EndFunction
			\end{algorithmic}
		\end{subfigure}
		\par\vskip\floatsep
		\begin{subfigure}{.5\textwidth}
			\hspace{1cm}
			\begin{tikzpicture}[xscale=1.5,yscale=1.5]
				\node[shape=circle,draw=black, fill=orange!30] (A) at (0,0) {$u_1$};
				\node[shape=circle,draw=black, fill=orange!30] (B) at (0,1) {$u_2$};
				\node[shape=circle,draw=black, fill=orange!30] (C) at (0,2) {$u_3$};
				\node[shape=circle,draw=black, fill=orange!30] (D) at (1,0) {$w_1$};
				\node[shape=circle,draw=black, fill=orange!30] (E) at (1,1) {$w_2$};
				\node[shape=circle,draw=black, fill=orange!30] (F) at (1,2) {$w_3$};
				\path [-] (A) edge node[above] {} (D);
				\path [-] (B) edge node[above] {} (E);
				\path [-] (C) edge node[above] {} (F);
				\path [-] (A) edge node[above] {} (E);
				\path [-] (B) edge node[above] {} (F);
				\path [-] (C) edge node[above] {} (D);
			\end{tikzpicture}
		\end{subfigure}
		\hfil
		\begin{subfigure}{.45\textwidth}
			\centering
			\hspace{-1cm}
			\begin{tikzpicture}[xscale=4,yscale=1]
				\draw[thick] (0,0) -- (1,0);
				\draw[thick] (0,.1) -- (0,.-.1);
				\draw (.2,.05) -- (.2,-.05);
				\draw (.4,.05) -- (.4,-.05);
				\draw (.6,.05) -- (.6,-.05);
				\draw (.8,.05) -- (.8,-.05);
				\draw[thick] (1,.1) -- (1,.-.1);
				\node at (0,-.4) {$0$};
				\node at (1,-.4) {$1$};
				
				\newcommand\AONE{.5*0}
				\newcommand\ATWO{.5*1}
				\newcommand\ATHREE{.5*2}
				\newcommand\AFOUR{.5*3}
				\newcommand\AFIVE{.5*4}
				\newcommand\ASIX{.5*5}
				
				\newcommand\BLOCKHEIGHT{0.4}
				\draw[fill=red!30] (0,0.2+\AONE) rectangle ++(0.2,\BLOCKHEIGHT);
				\draw[fill=red!30] (0,0.2+\ATWO) rectangle ++(0.2,\BLOCKHEIGHT);
				\draw[fill=red!30] (0,0.2+\ATHREE) rectangle ++(0.2,\BLOCKHEIGHT);
				
				\draw[fill=green!30] (0.2,0.2+\AONE) rectangle ++(0.1,\BLOCKHEIGHT);
				\draw[fill=green!30] (0.2,0.2+\ATWO) rectangle ++(0.1,\BLOCKHEIGHT);
				
				\draw[fill=cyan!30] (0.3,0.2+\AONE) rectangle ++(0.1,\BLOCKHEIGHT);		
				\draw[fill=blue!30] (0.4,0.2+\AONE) rectangle ++(0.2,\BLOCKHEIGHT);				
				\draw[fill=blue!30] (0.4,0.2+\ASIX) rectangle ++(0.2,\BLOCKHEIGHT);

				\draw[fill=magenta!30] (0.6,0.2+\ASIX) rectangle ++(0.2,\BLOCKHEIGHT);
				\draw[fill=orange!30] (0.8,0.2+\ASIX) rectangle ++(0.2,\BLOCKHEIGHT);				
				
				\draw[fill=orange!30] (0.8,0.2+\AFIVE) rectangle ++(0.2,\BLOCKHEIGHT);
				
				\draw[fill=orange!30] (0.8,0.2+\AFOUR) rectangle ++(0.2,\BLOCKHEIGHT);
				\draw[fill=magenta!30] (0.6,0.2+\AFOUR) rectangle ++(0.2,\BLOCKHEIGHT);

				\node at (-.1,.4+\AONE) {$S_{u_1}$};
				\node at (-.1,.4+\ATWO) {$S_{u_2}$};
				\node at (-.1,.4+\ATHREE) {$S_{u_3}$};
				\node at (-.1,.4+\AFOUR) {$S_{w_1}$};
				\node at (-.1,.4+\AFIVE) {$S_{w_2}$};
				\node at (-.1,.4+\ASIX) {$S_{w_3}$};
			\end{tikzpicture}
		\end{subfigure}
		\caption{Routine \textsc{Define-Bipartite-Stable-Set-Subsets} (top),
			exemplary graph with~$6$ nodes (bottom left)
			and possible output of the routine
			for the point given by $ h_u = (0.6, 0.3, 0.2) $
			and $ h_w = (0.4, 0.2, 0.6) $.
			The point~$h$ can be written as a convex combination of six stable sets,
			namely $ \set{u_1, u_2, u_3} $, $ \set{u_1, u_2} $, $ \set{u_1} $,
			$ \set{u_1, w_3} $, $ \set{w_1, w_3} $ and $ \set{w_1, w_2, w_3} $,
			each one marked with same colour (bottom right).}
		\label{fig:stable-set}
		\null\vfill\null
	\end{figure}
	It is apparent that the so-defined sets satisfy $ \mu(S_e) = h_e $ for all $ e \in E $.
	Due to~\cref{stab:set-inq}, they do not overlap for any edge $ e \in E $
	and thus satisfy~\cref{stabelset:char1}.
	Altogether, we have proved $ H \subseteq P $.
\end{proof}

\subsection{Piecewise linear functions}
\label{Sec:Piecewise}

We will now use our convex extension of Zuckerberg's method
to reprove the integrality of two polytopes
modelling the convex hull a graph of a one-dimensional piecewise linear function.
This is another example which shows that the method is well suited
to prove results for the convex-hull of a mixed-integer set in many cases.
In our overview over the two models, we follow \cite{sridhar2013locally}.

We consider a continuous, piecewise linear function
\begin{equation*}
	\func{f}{[B_0, B_n]}{\R},\quad f(x) = m_i x + b_i,
		\text{ if } x \in [B_{i - 1}, B_i] \quad \forall i \in \set{1, \ldots, n},
\end{equation*}
where we assume $ B_0 = 0 $, $ B_n > 0 $, $ f(0) = 0 $ and $ B_0 < \ldots < B_n $
for the breakpoints of~$f$.
Let $ F_i \coloneqq f(B_i) $ for all $ i \in \set{0, \ldots, n} $.
We are now interested in describing the polytope
$ P \coloneqq \conv \set{(B_i, F_i) \mid i \in \set{0, \ldots, n}} $.
In the literature, there are many well-known
mixed-integer-programming (MIP) formulations for~$P$
(see \cite{sridhar2013locally} for an overview).
An MIP formulation is called \emph{locally ideal}
if the vertices of the linear programming (LP) relaxation
satisfies all integrality requirements and SOS2 properties.
For the multiple-choice method and the incremental method,
we prove via Zuckerberg's method that they are locally ideal.

\subsubsection{Multiple-choice method}

The \emph{multiple-choice method (MCM)}
(also called \emph{lambda} or \emph{SOS2 method})
introduces variables $ x \in \R $ and $ y \in \R $
for the $x$- and $y$-coordinates of the graph of~$f$ respectively.
Further, we need an auxiliary variable $ z \in \F $
and a set of variables $ \lambda \in \R^{n + 1}_+ $
which satisfies the SOS2 property.
This means that at most two of the $ \lambda $-variables can be positive,
and if two of them are positive, the two must be adjacent in the order of the vector.
The model then introduces the following constraints:
\begin{alignat}{1}
	x &= \sum_{i = 1}^n B_i \lambda_i,\label{equ:sos-def1}\\
	y &= \sum_{i = 1}^n F_i \lambda_i,\label{equ:sos-def2}\\
	z &= \sum_{i = 1}^n \lambda_i.\label{equ:sos-def3}
\end{alignat}
The MCM polytope is then given by
\begin{equation*}
	P \coloneqq \conv \SSet{(x, y, z, \lambda)
		\in \R \times \R \times \F \times \R^{n + 1}_+ }
			{\begin{split}
				\text{\cref{equ:sos-def1,equ:sos-def2,equ:sos-def3}},\\
				\lambda \text{ is SOS2}.
			\end{split}}.
\end{equation*}
Its linear relaxation is
\begin{equation*}
	H \coloneqq \SSet{(x, y, z, \lambda)
		\in \R \times \R \times \FR \times \R^{n + 1}_+}
			{\begin{split}
				\text{\cref{equ:sos-def1,equ:sos-def2,equ:sos-def3}},\\
				\lambda_i \geq 0\quad \forall i \in [n]
			\end{split}}.
\end{equation*}
We now show that the two coincide.
\begin{theorem}
	We have $ H = P $. Further, $H$ is integral, which means that MCM is locally ideal.
\end{theorem}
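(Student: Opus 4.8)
The plan is to establish $P \subseteq H$ trivially and $H \subseteq P$ via our extended criterion \cref{thm:extended_zucker}. The inclusion $P \subseteq H$ is immediate, since every generator of $P$ satisfies \cref{equ:sos-def1,equ:sos-def2,equ:sos-def3} together with $z \in \FR$ and $\lambda \geq 0$, and $H$ is convex. For the reverse inclusion I first record that $P$ equals the convex hull of the finite generator set
\[
	\FF \coloneqq \set{(0, 0, 0, \mathbf{0})}
		\cup \SSet{(B_i, F_i, 1, e_i)}{i},
\]
where $e_i$ is the unit vector selecting breakpoint~$i$. Indeed, the $z = 0$ slice of the mixed-integer set is forced to $\lambda = \mathbf{0}$ and hence $x = y = 0$, while its $z = 1$ slice is precisely the graph of~$f$ lifted by the associated SOS2 vectors; by the defining property of SOS2, this slice lies in $\conv\SSet{(B_i, F_i, 1, e_i)}{i}$. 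Thus $\conv(\FF) = P$, and it suffices to produce, for an arbitrary $h = (\bar x, \bar y, \bar z, \bar\lambda) \in H$, sets meeting the hypotheses of \cref{thm:extended_zucker} with respect to~$\FF$.

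The construction is the one suggested directly by the set characterizations of \cref{equ:sos-def1,equ:sos-def2,equ:sos-def3}. As $\bar\lambda \geq 0$ and $\sum_i \bar\lambda_i = \bar z \leq 1$, I place consecutive half-open intervals $I_i \subseteq [0, \bar z)$ of length $\bar\lambda_i$, discarding the empty ones, so that the non-empty $I_i$ partition $[0, \bar z)$. Then I set $S_{\lambda_i} \coloneqq (I_i, 1)$, $S_z \coloneqq ([0, \bar z), 1)$, $S_x \coloneqq \bigcup_{i:\, B_i \neq 0}(I_i, B_i)$ and $S_y \coloneqq \bigcup_{i:\, F_i \neq 0}(I_i, F_i)$, omitting degenerate zero-height rectangles. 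These are non-overlapping unions of rectangles from $\RR^U$, and one checks $\muE{U}(S_{\lambda_i}) = \bar\lambda_i$, $\muE{U}(S_z) = \bar z$, $\muE{U}(S_x) = \sum_i B_i\bar\lambda_i = \bar x$ and $\muE{U}(S_y) = \sum_i F_i\bar\lambda_i = \bar y$, using \cref{equ:sos-def1,equ:sos-def2,equ:sos-def3}. Moreover, for $t \in I_i$ the map $\varphiE{U}(t, \cdot)$ reads off $(B_i, F_i, 1, e_i) \in \FF$, and for $t \in [\bar z, 1)$ it reads off $(0, 0, 0, \mathbf{0}) \in \FF$; hence $\varphiE{U}(t, \cdot) \in \FF \subseteq \conv(\FF)$ for every $t \in U$. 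By \cref{thm:extended_zucker} we obtain $h \in \conv(\FF) = P$, so $H \subseteq P$.

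For the claim that $H$ is integral and MCM therefore \emph{locally ideal}: once $H = P = \conv(\FF)$ is known, every vertex of $H$ belongs to~$\FF$, and thus has $z$-coordinate in $\set{0, 1}$ and $\lambda$-coordinate equal to a unit vector (which is trivially SOS2). Hence the vertices of the LP relaxation meet the integrality and SOS2 requirements, which is exactly the definition of being locally ideal.

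The argument carries no genuine difficulty; the points to handle with care are the verification that $\conv(\FF)$ already coincides with~$P$ — i.e.\ that nothing is lost by passing from the full SOS2 slice to its breakpoint vertices — together with the degenerate cases, namely $\bar z = 0$ (which forces $h$ to be the origin and all sets to be empty) and $\bar\lambda_i = 0$ or $B_i = F_i = 0$ (where the corresponding intervals or rectangles vanish), as well as keeping the breakpoint indexing consistent throughout.
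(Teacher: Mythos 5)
Your proof is correct and follows essentially the same route as the paper: the interval construction (consecutive intervals of lengths $\bar\lambda_i$ carrying height-$1$ rectangles for the $\lambda$'s and height-$B_i$, $F_i$ rectangles for $x$ and $y$, with $S_z=([0,\bar z),1)$) is exactly the paper's routine \textsc{Define-SOS2-Subsets}, followed by the same appeal to \cref{thm:extended_zucker}. The only cosmetic difference is that you verify $\varphiE{U}(t,\cdot)$ lands in an explicitly listed generator set $\FF$ (and deduce integrality/SOS2 of the vertices from $H=\conv(\FF)$), whereas the paper checks the set characterizations of the defining constraints and reads off the same integrality conclusion from the fact that at most one $S_{\lambda_i}$ is active at each $t$.
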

\begin{proof}
	We first translate the constraints defining~$P$ into the set characterizations
	\begin{alignat}{1}
		\phiE{U}(S_x, t) &= \sum_{i = 1}^n B_i \phiE{U}(S_{\lambda_i}, t)
			\quad \forall t \in U,\label{equ:set-char-sos-def1}\\
		\phiE{U}(S_y, t) &= \sum_{i = 1}^n F_i \phiE{U}(S_{\lambda_i}, t)
			\quad \forall t \in U,\label{equ:set-char-sos-def2}\\
		\phiE{U}(S_z, t) &= \sum_{i = 1}^n \phiE{U}(S_{\lambda_i}, t)
			\quad \forall t \in U,\label{equ:set-char-sos-def3}\\
		\phiE{U}(S_z, t) &\leq 1
			\quad \forall t \in U,\label{equ:set-char-sos-def4}\\
		\phiE{U}(S_{\lambda_i}, t) &\geq 0,
			\quad \forall i \in [n], \forall t \in U.\label{equ:set-char-sos-def5}
	\end{alignat}
	For a given point $ h = (h_x, h_y, h_z, h_\lambda) \in H $,
	the set construction is given in routine \textsc{Define-MCM-Subsets}
	in \cref{Fig:sos}.
	\begin{figure}[h]
		\begin{algorithmic}[1]
			\Function{Define-SOS2-Subsets}{}
				\State $ S_x \leftarrow \emptyset $
				\State $ S_y \leftarrow \emptyset $
				\For{$ i \in [n] $}
					\State $ I \leftarrow [\sum_{j = 1}^{i - 1} h_{\lambda_j},
						\sum_{j = 1}^i h_{\lambda_j}) $
					\State $ S_{\lambda_i} \leftarrow (I, 1) $
					\State $ S_x \leftarrow S_x \cup (I, B_i) $
					\State $ S_y \leftarrow S_y \cup (I, F_i) $
				\EndFor
				\State $ S_z \leftarrow ([0, h_z), 1) $
				\State \Return{$ S_x, S_y, S_z $}
			\EndFunction
		\end{algorithmic}
		\centering
		\caption{Routine \textsc{Define-SOS2-Sets}}
		\label{Fig:sos}
	\end{figure}
	The routine first initializes~$ S_{h_x} $ and~$ S_{h_y} $ as empty sets.
	It then iterates over all segments $ i \in [n] $.
	Lines~5 and~6 place the sets for the $ \lambda $-variables next to each other.
	Together with Line~10 this ensures
	that \cref{equ:set-char-sos-def3,equ:set-char-sos-def4,equ:set-char-sos-def5}
	are fulfilled.
	In Lines~7 and~8, we iteratively construct the sets for~$ h_x $
	by adding rectangles of width~$I$ and height~$ B_i $.
	Similarly, we define the sets for~$ h_y $
	by adding rectangles of width~$I$ and height~$ F_i $.
	This ensures that \cref{equ:set-char-sos-def1,equ:set-char-sos-def2} hold.
	It can easily be checked that the sets have the required measures,
	\ie we have $ \muE{U}(S_x) = h_x $, $ \muE{U}(S_y) = h_y $,
	$ \muE{U}(S_z) = h_z $ and $ \muE{U}(S_{\lambda_i}) = h_{\lambda_i} $
	for all $ i \in [n] $.
	As $ \conv(P) \subseteq H $ is obvious,
	we can invoke \cref{thm:extended_zucker}
	and thus conclude $ H = P $.
	
	For each $ t \in U $, there exists at most one $ i \in [n] $
	such that $ y(S_{\lambda_i}, t) = 1 $ holds.
	This shows that the vertex associated with~$t$ via the mapping $ \varphi $
	has at most one $ \lambda $-coordinate set to~$1$
	and the others to~$0$ or all coordinates set to~$0$
	and the $z$-coordinate is at either~$0$ or~$1$.
	We can conclude that all vertices of~$H$
	satisfy the integrality and SOS2 requirements
	in the definition of~$P$.
\end{proof}

\subsubsection{Incremental method}

The \emph{incremental method (IM)} (or \emph{delta method})
also has variables $ x \in \R $ and $ y \in \R $
for the $x$- and $y$-coordinates of the graph of~$f$.
Further, it introduces an auxiliary variable $ z \in \F $
and two ordered sets of variables $ \delta \in \R^n $ and $ b \in \F^{n - 1} $.
They are coupled via the following constraints:
\begin{alignat}{1}
	x &= \sum_{i = 1}^n (B_i - B_{i - 1}) \delta_i\label{equ:inc-def1}\\
	y &= \sum_{i = 1}^n (F_i - F_{i - 1}) \delta_i\label{equ:inc-def2}\\
	\delta_1 &\leq z\label{equ:inc-def3}\\
	0 &\leq \delta_n\label{equ:inc-def4}\\
	\delta_{i + 1} &\leq b_i \leq \delta_i \quad \forall i \in [n - 1].\label{equ:inc-def5}
\end{alignat}
The IM polytope is then given given by
\begin{equation*}
	P \coloneqq \conv\SSet{(x, y, z, \delta, b)
		\in \R \times \R \times \F \times \R^n \times \F^{n + 1}}
			{\cref{equ:inc-def1,equ:inc-def2,equ:inc-def3,equ:inc-def4,equ:inc-def5}},
\end{equation*}
and its linear relaxation is
\begin{equation*}
	H \coloneqq \SSet{(x, y, z, \delta, b)
		\in \R \times \R \times \FR \times \R^n \times \FR^{n + 1}}
			{\cref{equ:inc-def1,equ:inc-def2,equ:inc-def3,equ:inc-def4,equ:inc-def5}}.
\end{equation*}
\begin{theorem}
	We have $ H = P $.
	Further, $H$ is integral,
	which means that IM is locally ideal.
\end{theorem}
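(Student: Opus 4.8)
The plan is to follow the template of the proof just given for the multiple-choice method. The inclusion $P \subseteq H$ is immediate, since $H$ is the convex set defined by the relaxed constraints and hence contains the (mixed $0/1$) set $\FF$ whose convex hull is $P$. For the reverse inclusion I would first rewrite \cref{equ:inc-def1,equ:inc-def2,equ:inc-def3,equ:inc-def4,equ:inc-def5} as the set characterizations
\begin{align*}
	\phiE{U}(S_x, t) &= \sum_{i = 1}^n (B_i - B_{i - 1})\,\phiE{U}(S_{\delta_i}, t), &
	\phiE{U}(S_y, t) &= \sum_{i = 1}^n (F_i - F_{i - 1})\,\phiE{U}(S_{\delta_i}, t),\\
	\phiE{U}(S_{\delta_1}, t) &\leq \phiE{U}(S_z, t), &
	\phiE{U}(S_{\delta_n}, t) &\geq 0,\\
	\phiE{U}(S_{\delta_{i+1}}, t) &\leq \phiE{U}(S_{b_i}, t) \leq \phiE{U}(S_{\delta_i}, t) & &\quad \forall\, i \in [n-1],
\end{align*}
each required to hold for all $t \in U$ (the conditions $z \in \F$ and $b \in \F^{n-1}$ being encoded by requiring the heights of $S_z$ and of the $S_{b_i}$ to lie in $\F$).

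Given a point $h = (h_x, h_y, h_z, h_\delta, h_b) \in H$, constraints \cref{equ:inc-def4,equ:inc-def5} force the monotone chain $1 \geq h_z \geq h_{\delta_1} \geq h_{b_1} \geq h_{\delta_2} \geq \dots \geq h_{b_{n-1}} \geq h_{\delta_n} \geq 0$. The key point is that, unlike in the multiple-choice method where the $\lambda$-sets are placed side by side, here the $\delta$-variables are \emph{nested}, so every set should be anchored at $0$: I would set $S_z := ([0, h_z), 1)$, $S_{\delta_i} := ([0, h_{\delta_i}), 1)$ for $i \in [n]$ and $S_{b_i} := ([0, h_{b_i}), 1)$ for $i \in [n-1]$, and then build the $x$- and $y$-sets incrementally by stacking the slabs that the incremental formulation prescribes: with the convention $h_{\delta_{n+1}} := 0$, put $S_x := \bigcup_{i = 1}^n ([h_{\delta_{i+1}}, h_{\delta_i}), B_i)$ and $S_y := \bigcup_{i = 1}^n ([h_{\delta_{i+1}}, h_{\delta_i}), F_i)$, discarding any rectangle with empty base or zero height. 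The monotonicity of the chain makes the bases $[h_{\delta_{i+1}}, h_{\delta_i})$ pairwise disjoint, so $S_x, S_y \in \LLE{U}$.

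It then remains to verify the two hypotheses of \cref{thm:extended_zucker}. The measure conditions for $S_z, S_{\delta_i}, S_{b_i}$ are immediate, and $\muE{U}(S_x) = \sum_{i = 1}^n B_i(h_{\delta_i} - h_{\delta_{i+1}}) = \sum_{i = 1}^n (B_i - B_{i - 1})h_{\delta_i} = h_x$ by Abel summation (using $B_0 = 0$), and likewise $\muE{U}(S_y) = h_y$ (using $F_0 = f(0) = 0$). The nesting characterizations hold because $\supp(S_{\delta_n}) \subseteq \supp(S_{b_{n-1}}) \subseteq \dots \subseteq \supp(S_{\delta_1}) \subseteq \supp(S_z)$ while all these sets have height $1$; and for $t \in [h_{\delta_{i+1}}, h_{\delta_i})$ precisely $S_{\delta_1}, \dots, S_{\delta_i}$ are active, so $\phiE{U}(S_x, t) = B_i = \sum_{j = 1}^i (B_j - B_{j-1})$, which matches the right-hand side of the first characterization, and analogously for $y$. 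Hence every $\varphiE{U}(t, \cdot)$ satisfies all defining constraints of $\FF$, i.e.\ lies in $\FF \subseteq \conv(\FF) = P$, so \cref{thm:extended_zucker} gives $h \in P$ and therefore $H = P$. Local ideality is then automatic: $H = \conv(\FF)$ is the convex hull of a finite union of polytopes (one for each feasible $0/1$-choice of $(z, b)$), so every vertex of $H$ lies in $\FF$ and in particular has $z \in \F$ and $b \in \F^{n-1}$.

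I do not expect a genuine obstacle here; the proof is routine once the anchored-at-$0$, incrementally-stacked construction is identified. The only step that needs a moment's care is checking $\varphiE{U}(t, \cdot) \in \FF$ near the layer boundaries: for $t$ in the sub-interval $[h_{\delta_{i+1}}, h_{b_i})$ versus $[h_{b_i}, h_{\delta_i})$ the $i$-th $b$-coordinate of the associated incidence vector toggles between $1$ and $0$, and one must observe that both values are admissible because there $\delta_{i+1} = 0 \leq b_i \leq \delta_i = 1$; ordering the rectangles in the same way as the $\delta$-variables makes this bookkeeping transparent.
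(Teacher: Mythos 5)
Your construction coincides with the paper's routine \textsc{Define-Incremental-Subsets}: the sets $S_z$, $S_{\delta_i}$, $S_{b_i}$ anchored at $0$ with height $1$, and $S_x$, $S_y$ assembled from rectangles over the disjoint bases $[h_{\delta_{i+1}}, h_{\delta_i})$ with cumulative heights $B_i=\sum_{j\le i}(B_j-B_{j-1})$ and $F_i$, followed by the same verification of the set characterizations and measures and the same appeal to \cref{thm:extended_zucker}, so the proposal is correct and essentially the paper's proof. The only (harmless) deviation is the concluding integrality argument, which you obtain from the vertices of the convex hull of a finite union of polytopes rather than by reading the $z$- and $b$-coordinates off the points $\varphiE{U}(t,\cdot)$ as the paper does.
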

\begin{proof}
	The set characterizations corresponding to the constraints of~$P$ read
	\begin{alignat}{1}
		\phiE{U}(S_x, t) &= \sum_{i = 1}^n (B_i - B_{i - 1}) \phiE{U} (S_{\delta_i}, t)
			\quad \forall t \in U,\label{equ:set-char-inc-def1}\\
		\phiE{U}(S_y, t) &= \sum_{i = 1}^n (F_i - F_{i - 1}) \phiE{U}(S_{\delta_i}, t)
			\quad \forall t \in U,\label{equ:set-char-inc-def2}\\
		\phiE{U}(S_{\delta_1}, t) &\leq \phiE{U}(S_z, t)
			\quad \forall t \in U,\label{equ:set-char-inc-def3}\\
		0 &\leq \phiE{U} (S_{\delta_n}, t)
			\quad \forall t \in U,\label{equ:set-char-inc-def4}\\
		\phiE{U}(S_{\delta_{i + 1}}, t)
			&\leq \phiE{U}(S_{b_i}, t)
			\leq \phiE{U}(S_{\delta_i}, t) 
			\quad \forall i \in [n - 1], t \in U.\label{equ:set-char-inc-def5}
	\end{alignat}
	For a given point $ h = (h_x, h_y, h_z, h_\lambda) \in H $
	and $ h_{\delta_{n + 1}} \coloneqq 0 $,
	the set construction is given in routine \textsc{Define-Incremental-Sets}
	in \cref{Fig:incremental}.
	\begin{figure}
		\begin{algorithmic}[1]
			\Function{Define-Incremental-Subsets}{}
				\State $ S_x \leftarrow \emptyset $
				\State $ S_y \leftarrow \emptyset $
				\For{$ i \in [n] $}
					\State $ S_{\delta_i} \leftarrow ([0, h_{\delta_i}), 1) $
					\State $ S_x \leftarrow S_x \cup ([h_{\delta_{i + 1}}, h_{\delta_i}),
						\sum_{j = 1}^i (B_j - B_{j - 1}) h_{\delta_j}) $
					\State $ S_y \leftarrow S_y \cup([h_{\delta_{i + 1}}, h_{\delta_i}),
						\sum_{j = 1}^i (F_j - F_{j - 1}) h_{\delta_j}) $
				\EndFor
				\State $ S_z \leftarrow ([0, h_z), 1) $
				\For{$ i \in [n - 1] $}
					\State $ S_{b_i} \leftarrow ([0, h_{b_i}), 1) $
				\EndFor
				\State \Return{$ S_x, S_y, S_z, S_{b_1}, \ldots, S_{b_{n - 1}} $}
			\EndFunction
		\end{algorithmic}
		\centering
		\caption{Routine \textsc{Define-Incremental-Subsets}}
		\label{Fig:incremental}
	\end{figure}
	We first initialize~$ S_x $ and~$ S_y $ as empty sets.
	Then we iterate over all segments $ i \in [n] $
	and place for each of them the rectangle $ [0, h_{\delta_i}) $
	with a height of~$1$ in Line~5.
	This ensures~\cref{equ:set-char-inc-def4}.
	From~\cref{equ:inc-def5} it follows that $ h_{\delta_{i + 1}} \leq h_{\delta_i} $
	holds for all $ i \in [n - 1] $.
	Together with the steps in Lines~10--12
	we ensure that~\cref{equ:set-char-inc-def5} is satisfied
	and the width of the rectangle defined in Lines~6--7 is non-negative.
	Its height is chosen such that \cref{equ:set-char-inc-def1,equ:set-char-inc-def2}
	are satisfied.
	In Line~9, $ S_z $ is set in such a way that \cref{equ:set-char-inc-def3} holds.
	
	It can easily be verified that the sets have the required measures,
	\ie $ \muE{U}(S_x) = h_x $, $ \muE{U}(S_y) = h_y $, $ \muE{U}(S_z) = h_z $,
	$ \muE{U}(S_{\delta_i}) = h_{\delta_i} $ for all $ i \in [n] $
	and $ \muE{U}(S_{b_i}) = h_{b_i} $ for all $ i \in [n - 1] $.
	Further, $ P \subseteq H_{\text{INC}} $ is obvious.
	Thus, we can use \cref{thm:extended_zucker}
	to conclude $ P = H $.
	
	For each $ t \in U $, there exists at most one $ i \in [n] $
	such that $ y(S_{\lambda_i}, t) = 1 $.
	Therefore, the vertex associated with~$t$ via the mapping $ \varphiE{U} $
	has all $b$-coordinates equal to either~$0$ or~$1$,
	and the same holds for the $z$-coordinate.
	This means that all vertices satisfy the integrality requirements
	in the definition of~$P$.
\end{proof}

\subsection{Total unimodularity of interval matrices}
\label{Sec:TU_Interval}

A matrix $ A \subseteq \F^{n \times m} $
is called an \emph{interval matrix}
if the $1$-entries in each row are consecutive.
Let~$A$ be such a matrix, and let $ b \in \Z^n $.
For each $ i \in [n] $, we introduce a variable $ x_i \in \R_+ $.
We then consider the system $ Ax \leq b $, which we write constraint-wise:
\begin{equation}
	\sum_{i \in [n]:\, A_{ij} = 1} x_i \leq b_j \quad \forall j \in [m].
	\label{interval:set-inq}
\end{equation}
In this setting, we can reprove the total unimodularity of~$A$
via Hoffman's and Kruskal's theorem.
\begin{theorem}
	Let $ P \coloneqq \conv \set{x \in \N^{\card{E}} \mid \cref{interval:set-inq}} $,
	and let $ H \coloneqq \set{x \in \R_+^{\card{E}} \mid \cref{interval:set-inq}} $.
	Then we have $ H = P $.
	\label{interval:thm}
\end{theorem}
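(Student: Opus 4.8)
The plan is to prove $P\subseteq H$ trivially and then establish the reverse inclusion $H\subseteq P$ through \cref{thm:extended_working_thm}, in the same spirit as the bipartite incidence-matrix example \cref{inci:thm}. \WLOGc, we may assume $b_j\ge 0$ for all $j\in[m]$, since otherwise $H=\emptyset$. Fix $h\in H$, write $I_j\coloneqq\set{i\mid A_{ij}=1}$ for the support of the $j$-th constraint, and note that the interval-matrix property says precisely that $\set{j\mid i\in I_j}=[\ell_i,r_i]$ is an interval for each variable index $i$. Via \cref{def:set-char2}, the set characterization of \cref{interval:set-inq} is
\begin{equation*}
	\sum_{i\in I_j}\phiE{U}(S_i,t)\le b_j\quad\forall j\in[m],\ \forall t\in U,
\end{equation*}
and the implicit bound $x_i\ge 0$ is satisfied automatically as soon as every rectangle making up an $S_i$ has positive height.

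For the construction of the sets $S_i$ I would use the same ``integer part plus fractional part'' split as for the incidence matrix. Put $f_i\coloneqq h_i-\myfloor{h_i}\in[0,1)$ and give $S_i$ a full-width rectangle $([0,1),\myfloor{h_i})$ (dropped if $\myfloor{h_i}=0$), which contributes the constant $\myfloor{h_i}$ to every constraint at every $t$. Setting $\beta_j\coloneqq b_j-\sum_{i\in I_j}\myfloor{h_i}$, the feasibility $\sum_{i\in I_j}h_i\le b_j$ yields $\beta_j\in\Z_{\ge 0}$ and $\sum_{i\in I_j}f_i\le\beta_j$. It then remains to place, for each $i$, a unit-height ``fractional'' rectangle over a set $F_i\in\LL$ with $\mu(F_i)=f_i$, such that for every column $j$ and every $t\in U$ at most $\beta_j$ of the sets $F_i$ with $i\in I_j$ contain $t$; this is a circular packing problem for arcs $F_i$ of prescribed lengths $f_i$. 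I would solve it by a left-to-right sweep over the columns $j=1,\dotsc,m$ --- the analogue of the topological-order processing of the shortest-path example in \cref{sec:shortest-path} --- maintaining the current occupancy profile on $[0,1)$: when a variable enters at its left endpoint $\ell_i$, its block $F_i$ is placed into the currently least-occupied part of $[0,1)$ (wrapping modulo $1$) via a call to \textsc{Match}, and when it leaves past $r_i$ the occupancy drops again. Since $\mu(S_i)=h_i$ holds by construction, \cref{thm:extended_working_thm} then gives $H\subseteq P$, and the total unimodularity of $A$ is immediate from \cref{hoffi:thm}, just as for \cref{inci:thm}.

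The hard part will be verifying the invariant that the sweep keeps at most $\beta_j$ fractional blocks of $I_j$ overlapping at any point $t$. Two phenomena obstruct a naive argument: the wrap-around modulo $1$ (a block of length close to $1$ can circle back and overlap a block $F_{i'}$ of a variable $i'$ sharing a column with $i$) and the existence of nested intervals $[\ell_{i'},r_{i'}]\subsetneq[\ell_i,r_i]$, whose blocks, if placed in a careless order, can be driven a full unit apart along $[0,1)$ and thereby forced into an avoidable overlap. I expect the cleanest remedy to be the network-matrix viewpoint: a consecutive-ones matrix is the arc--path incidence matrix of a path augmented by extra arcs, so $\set{x\ge 0\mid Ax\le b}$ is a generalized flow polytope; reading the sweep as routing the fractional flow through the nodes of that path --- pushing slack forward and absorbing each variable arc at the end of its interval --- turns the per-column bound into a direct consequence of flow conservation together with $\sum_{i\in I_j}f_i\le\beta_j$, and at the same time it prescribes the correct processing order (and, if necessary, a preliminary reduction to a \ie nesting-free, proper interval family obtained by splitting variables). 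As an alternative, one could follow the transformation strategy of \cref{Sec:Transformation}, first inflating $h$ to an auxiliary point $\bar h\in H$ whose sets are easy to place and then shrinking them back while preserving all set characterizations.
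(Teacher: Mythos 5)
Your construction skeleton coincides with the paper's: split each coordinate into a full-width rectangle of height $\myfloor{h_i}$ plus a unit-height fractional arc of length $f_i=h_i-\myfloor{h_i}$, set $\beta_j\coloneqq b_j-\sum_{i\in I_j}\myfloor{h_i}\ \bigl(\ge\sum_{i\in I_j}f_i\bigr)$, and reduce the set characterization of \cref{interval:set-inq} to placing the fractional arcs so that no $t\in U$ is covered by more than $\beta_j$ arcs with $i\in I_j$. The paper settles this last step in one line: it processes the variables in their natural order, places the arcs consecutively with wrap-around (via the function $o$), and the characterization holds because the arcs belonging to any one constraint then form a single contiguous (possibly wrapped) arc of total length $\sum_{i\in I_j}f_i\le\beta_j$, and such an arc meets each residue class $t+\Z$ at most $\lceil\sum_{i\in I_j}f_i\rceil\le\beta_j$ times. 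This one-liner is valid precisely because, in the orientation the construction uses, every constraint's support consists of consecutively processed variables.

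The gap in your proposal is that this crucial verification is never carried out. You work in the transposed orientation (each variable's constraint set $[\ell_i,r_i]$ is an interval, while $I_j$ need not be contiguous in the processing order), you correctly sense that a naive consecutive sweep then breaks (wrap-around, nesting), but the remedies you name -- ``least-occupied'' placement, the network-matrix/flow reading, splitting into a proper interval family, or the transformation strategy -- are only announced, not executed. The difficulty is real, not cosmetic: take three variables of value $\tfrac12$ with constraint intervals $[1,3]$, $[2,2]$, $[3,3]$ and all right-hand sides equal to $1$; the point is feasible (indeed it is $\tfrac12(1,0,0)+\tfrac12(0,1,1)$), yet the consecutive placement in order of left endpoints makes the arcs of the first and third variable coincide and violates the characterization of constraint $3$ with $\beta_3=1$, while sorting by right endpoints fails on the mirrored instance $[1,3],[2,2],[1,1]$. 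So neither a greedy sweep nor a simple endpoint sort is self-evidently correct, and the coverage invariant you defer is exactly the content of the proof. To close it, either specify a concrete placement and prove the per-constraint bound (e.g.\ by actually writing out the flow argument you allude to), or set the problem up in the orientation where each constraint's support is a consecutive block of the processed variables, in which case the contiguous-arc argument above finishes the proof immediately.
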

\begin{proof}
	Clearly, we have $ P \subseteq H $.
	In order to prove the converse,
	we transform constraint~\cref{interval:set-inq}
	into a set characterization, namely
	\begin{alignat}{1}
		\sum_{i \in [n]:\, A_{ij} = 1} \phiE{U}(S_i, t) \leq b_j
			& \quad \forall j \in [m], \forall t \in U.
		\label{interval:char1}
	\end{alignat}
	We assume $ b_j \geq 0 $ for all $ j \in [m] $,
	since otherwise the polytope is empty.
	For each point $ h \in H $, we next need to find sets~$ S_i $ for all $ i \in [n] $
	such that $ \muE{U}(S_i) = h_i $ and the above conditions hold.
	The sets are defined in routine \textsc{Define-interval-matrices-Subsets}
	given in \cref{Fig:interval-extended}.
	\begin{figure}[h]
		\null\vfill\null
		\begin{algorithmic}[1]
			\Function{Define-interval-matrices-Subsets}{}
				\State $ t \leftarrow 0 $
				\For{$ i = 1, \ldots, n $}
					\Comment{Process elements in the order of the columns}
					\State $ (I, t) \leftarrow o(t, h_i - \myfloor{h_i}) $
					\State $ S_i \coloneqq ([0, 1), \myfloor{h_i}) \cup (I, 1) $
				\EndFor
			\EndFunction
		\end{algorithmic}
		\null\vfill\null
		\centering
		\caption{Routine \textsc{Define-interval-matrices-Subsets}}
		\label{Fig:interval-extended}
		\null\vfill\null
	\end{figure}
	From the above construction, it is apparent that for each $ h \in H $
	the corresponding sets satisfy~\cref{interval:char1},
	because they are placed in consecutive order.
	Thus, we have proved $ H \subseteq P $.
\end{proof}
The desired result is now a consequence of~\cref{interval:thm,hoffi:thm}.
\begin{corollary}
	Let $A$ be an interval matrix. Then~$A$ is totally unimodular.
\end{corollary}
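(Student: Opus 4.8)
The plan is to deduce the corollary directly by combining \cref{interval:thm} with the Hoffman--Kruskal characterization stated in \cref{hoffi:thm}; essentially all of the content has already been established in the proof of \cref{interval:thm}, and what remains is a routine verification that its hypotheses are met. First I would observe that an interval matrix $A$ has all entries in $\set{0, 1}$, hence in $\set{-1, 0, 1}$, so \cref{hoffi:thm} applies to $A$. Note also that, in contrast to the node--edge incidence matrix treated earlier, no appeal to invariance of total unimodularity under transposition is needed here: the system \cref{interval:set-inq} is exactly $Ax \leq b$, $x \geq 0$, so $A$ itself is the matrix being analysed.

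Next I would invoke \cref{interval:thm}: it asserts that for an arbitrary integral right-hand side $b \in \Z^n$ the relaxation $H = \set{x \in \R_+^{\card{E}} \mid Ax \leq b}$ coincides with $P = \conv\set{x \in \N^{\card{E}} \mid Ax \leq b}$. Since $H = P$ is thus the convex hull of a set of integral points, every vertex (extreme point) of $H$ lies in that set and is therefore integral. As this holds for \emph{every} $b \in \Z^n$, the hypothesis of \cref{hoffi:thm} is satisfied, and we conclude that $A$ is totally unimodular, which is the claim.

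The only points requiring a little care are bookkeeping ones: confirming that the polyhedron $\set{x \in \R \mid Ax \leq b,\, x \geq 0}$ figuring in \cref{hoffi:thm} is literally the set $H$ analysed in \cref{interval:thm}, and checking that the degenerate case excluded in that proof --- some $b_j < 0$, in which case $H = \emptyset$ --- causes no trouble, since an empty polyhedron vacuously has only integral vertices. I do not expect any genuine obstacle; the substantive step is the proof of \cref{interval:thm} itself, where the set characterization \cref{interval:char1} is realised by placing the rectangles consecutively, modulo~$1$, via routine \textsc{Define-interval-matrices-Subsets}, and that has already been carried out.
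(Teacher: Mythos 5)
Your proposal is correct and follows essentially the same route as the paper, which simply deduces the corollary from \cref{interval:thm} together with \cref{hoffi:thm}; your additional remarks (no transposition needed, entries in $\set{0,1}\subseteq\set{-1,0,1}$, and the vacuous empty-polyhedron case) are accurate fillings-in of routine details the paper leaves implicit.
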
	

\subsection{Proof of \cref{cor:set_interpretation3}}
\label{sec:proofs-graph}

In order to prove \cref{cor:set_interpretation3},
we first need to characterize the convex hull
of the graph of~$f$.
To this end, we define the two functions $ \psi_-, \psi_+\colon T \to \R $ via
\begin{align*}
	\psi_-(x) &= \min\SSet{\sum_{\xi \in \FF} \mu(L_\xi(S_1, \ldots, S_n)) f(\xi)}
		{(S_1, \ldots, S_n) \in Z(x)},\\
	\psi_+(x) &= \max\SSet{\sum_{\xi \in \FF} \mu(L_\xi(S_1, \ldots, S_n)) f(\xi)}
		{(S_1, \ldots, S_n) \in Z(x)}.
\end{align*}
Recall that we assume $ \FF \subseteq \F^n $ in this case.
\begin{lemma}
	\label{thm:zucker_function_small}
	For every function $ f\colon \FF \to \R $ with $ \FF \subseteq \F^n $,
	we have
	\begin{equation*}
		X(f) = \SSet{(x, z) \in T \times \R}{\psi_-(x) \leq z \leq \psi_+(x)}.
	\end{equation*}  
\end{lemma}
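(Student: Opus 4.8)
The plan is to adapt the proof of \cref{thm:zucker_function2} to the present setting, which is just its restriction to the binary machinery of \cref{Sec:Geometric_Proofs}: one works with $\LL,\mu,\varphi,L_\xi,Z$ instead of $\LLE{U},\muE{U},\varphiE{U},\LE{U}_\xi,\ZE$ and exploits $\FF\subseteq\F^n$, so that all coordinates occurring lie in $\set{0,1}$. The first point to settle is that $\psi_-$ and $\psi_+$ are well defined, i.e.\ that the extrema in their definitions are attained for every $x\in T$. For this I would argue as follows: by \cref{Cor:Convex_Combinations}, every $(S_1,\ldots,S_n)\in Z(x)$ gives a vector $\lambda:=\big(\mu(L_\xi(S_1,\ldots,S_n))\big)_{\xi\in\FF}$ with $\lambda\geq 0$, $\sum_{\xi\in\FF}\lambda_\xi=1$ and $\sum_{\xi\in\FF}\lambda_\xi\xi=x$; conversely the interval construction recalled below realises \emph{every} such $\lambda$ by some $(S_1,\ldots,S_n)\in Z(x)$. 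Hence $\SSet{\sum_{\xi\in\FF}\mu(L_\xi(S_1,\ldots,S_n))f(\xi)}{(S_1,\ldots,S_n)\in Z(x)}$ is the image of the polytope $\SSet{\lambda\in\R^{\FF}}{\lambda\geq 0,\ \sum_{\xi}\lambda_\xi=1,\ \sum_{\xi}\lambda_\xi\xi=x}$ under the linear map $\lambda\mapsto\sum_{\xi}\lambda_\xi f(\xi)$; this polytope is non-empty since $x\in T=\conv(\FF)$ forces $Z(x)\neq\emptyset$ by \cref{thm:gupteC}, so the image is a non-empty compact interval and $\psi_-(x),\psi_+(x)$ are well defined.

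For the inclusion $X(f)\subseteq\SSet{(x,z)\in T\times\R}{\psi_-(x)\le z\le\psi_+(x)}$, I would take $(x,z)\in X(f)$, write $(x,z)=\sum_{k=1}^{r}\lambda_k(\xi^k,f(\xi^k))$ with $\xi^k\in\FF$, $\lambda_k\geq 0$ and $\sum_k\lambda_k=1$, partition $U=I_1\cup\ldots\cup I_r$ with $\mu(I_k)=\lambda_k$ exactly as in the proof of \cref{thm:extended_zucker}, and put $S_i:=\bigcup_{k:\ \xi^k_i\neq 0}I_k\in\LL$. Since $\xi^k_i\in\set{0,1}$ one has $\phi(t,S_i)=\xi^k_i$ for $t\in I_k$, hence $\mu(S_i)=\sum_{k:\ \xi^k_i\neq 0}\lambda_k=\sum_k\lambda_k\xi^k_i=x_i$ and $\varphi(t,S_1,\ldots,S_n)=\xi^k\in\FF$ for $t\in I_k$, so $(S_1,\ldots,S_n)\in Z(x)$. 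Moreover $L_\xi(S_1,\ldots,S_n)=\bigcup\set{I_k\mid\xi^k=\xi}$, so $\sum_{\xi\in\FF}\mu(L_\xi(S_1,\ldots,S_n))f(\xi)=\sum_k\lambda_kf(\xi^k)=z$, whence $\psi_-(x)\le z\le\psi_+(x)$.

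For the reverse inclusion, I would take $(x,z)$ with $x\in T$ and $\psi_-(x)\le z\le\psi_+(x)$, pick $(S_1,\ldots,S_n)\in Z(x)$ attaining $\psi_-(x)$ and $(S'_1,\ldots,S'_n)\in Z(x)$ attaining $\psi_+(x)$, write $z=t\,\psi_-(x)+(1-t)\,\psi_+(x)$ for some $t\in[0,1]$, and set
\[
  \lambda(\xi):=t\,\mu\big(L_\xi(S_1,\ldots,S_n)\big)+(1-t)\,\mu\big(L_\xi(S'_1,\ldots,S'_n)\big)\qquad(\xi\in\FF).
\]
Then $\lambda(\xi)\geq 0$; applying \cref{Cor:Convex_Combinations} to $(S_1,\ldots,S_n)$ and to $(S'_1,\ldots,S'_n)$ gives $\sum_{\xi}\mu(L_\xi(S_1,\ldots,S_n))=\sum_{\xi}\mu(L_\xi(S'_1,\ldots,S'_n))=1$ and $\sum_{\xi}\mu(L_\xi(S_1,\ldots,S_n))\xi=\sum_{\xi}\mu(L_\xi(S'_1,\ldots,S'_n))\xi=x$, so that $\sum_{\xi}\lambda(\xi)=1$ and $\sum_{\xi}\lambda(\xi)\xi=x$; and by the choice of the optimizers $\sum_{\xi}\lambda(\xi)f(\xi)=t\,\psi_-(x)+(1-t)\,\psi_+(x)=z$. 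Hence $(x,z)=\sum_{\xi\in\FF}\lambda(\xi)(\xi,f(\xi))\in X(f)$, which finishes the proof.

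I do not anticipate a serious obstacle here: once \cref{Cor:Convex_Combinations} is invoked, every identity needed is one that has already appeared in the proofs of \cref{thm:gupteC,thm:extended_zucker,thm:zucker_function2}, and the two facts that genuinely use $(S_1,\ldots,S_n)\in Z(x)$ and $\FF\subseteq\F^n$ — namely $\mu(S_i)=\sum_{\xi}\mu(L_\xi(S_1,\ldots,S_n))\xi_i$ and $\sum_{\xi}\mu(L_\xi(S_1,\ldots,S_n))=1$ — are exactly the content of that corollary. The only step calling for a bit of care is the well-definedness of $\psi_\pm$, i.e.\ the existence of the extrema, which the polytope reformulation in the first paragraph settles; its ``$\supseteq$'' half is precisely the explicit interval construction, so it costs no extra work.
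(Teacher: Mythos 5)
Your proof is correct and follows essentially the same route as the paper's: the same interval-partition construction $S_i=\bigcup_{k:\,\xi^k_i\neq 0}I_k$ shows every $(x,z)\in X(f)$ satisfies $\psi_-(x)\le z\le\psi_+(x)$, and the same convex combination $z=t\,\psi_-(x)+(1-t)\,\psi_+(x)$ of the two optimizers gives the converse. Your only addition is the explicit argument (via the polytope of weights $\lambda$) that the extrema defining $\psi_\pm$ are attained, a detail the paper's proof takes for granted rather than a genuinely different approach.
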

\begin{proof}
	First, assume $ (x, z) \in X(f) $, which implies
	\begin{equation*}
		(x, z) = \sum_{k = 1}^{\card{\FF}} \lambda_k (\xi^k, f(\xi^k))
	\end{equation*}
	for some $ \lambda_k \geq 0 $ for $ k = 1, \ldots, \card{F} $
	with $ \sum_{k = 1}^{\card{F}} \lambda_k = 1 $
	and $ \xi^1, \ldots, \xi^k $ is a fixed ordering of~$ \FF $.
	For the partition $ U = I_1 \cup \ldots \cup I_{\card{\FF}} $
	with $ I_1 = [0, \lambda_1) $ and $ I_k = [\lambda_1 + \ldots + \lambda_{k - 1},
		\lambda_1 + \ldots + \lambda_k) $
	for $ k \in \set{2, \ldots, \card{\FF}} $, we set
	\begin{equation*}
		S_i = \bigcup_{k:\, \xi^k_i \neq 0} I_k.
	\end{equation*}
	For $ k = 1, \ldots, \card{\FF} $,
	we have $ \LL^U_{\xi^k}(S_1, \ldots, S_n) = I_k $,
	and, as a consequence, $ \mu^U(\LL^U_{\xi^k}(S_1, \ldots, S_n)) = \lambda_k $.
	With
	\begin{equation*}
		z = \sum_{k = 1}^{\card{\FF}} \lambda_k \psi(\xi^k)
			= \sum_{k = 1}^{\card{\FF}} \mu^U(\LL^U_{\xi^k}(S_1, \ldots, S_n)) \psi(\xi^k),
	\end{equation*}
	it follows that $ \psi_-(x) \leq z \leq \psi_+(x) $.
	
	For the converse, assume $ \psi_-(x) \leq z \leq \psi_+(x) $
	and let $ (S_1, \ldots, S_n) $ as well as $ (S'_1, \ldots, S'_n) $
	be optimizers for the problems
	defining $ \psi_-(x) $ and $ \psi_+(x) $ respectively.
	We write $ z = t\psi_-(x) + (1 - t) \psi_+(x) $ for some $ t \in [0, 1] $
	and set
	\begin{equation*}
		\lambda(\xi) = t\mu^U(\LL^U_{\xi}(S_1, \ldots, S_n))
			+ (1 - t) \mu^U(\LL^U_{\xi}(S'_1, \ldots, S'_n))
	\end{equation*}
	for all $ \xi \in \FF $.
	This way we obtain the required representation of $ (x, z) $
	as the convex combination
	\begin{equation*}
		(x, z) = \sum_{\xi \in \FF} \lambda(\xi)(\xi,\psi(\xi)).
	\end{equation*}
\end{proof}
We can then state the proof as follows.
\begin{proof}[of \cref{cor:set_interpretation3}]
	We observe
	\begin{align*}
		\sum_{\xi \in \FF} & \mu (L_{\xi}(S_1, \ldots, S_n)) f(\xi)
			= \sum_{\xi \in \FF} \mu(L_{\xi}(S_1, \ldots, S_n))
				\sum_{i = 1}^k a_i \Psi_i(\xi_1, \ldots, \xi_n)\\
			&= \sum_{i = 1}^k a_i (\sum_{\xi \in \FF} \mu
				(L_\xi(S_1, \ldots, S_n) \Psi_i(\xi_1, \ldots, \xi_n))\\
			&= \sum_{i \in [k]} a_i \Omega(S_1, \ldots, S_n, \Psi_i).\\[-2\baselineskip]
	\end{align*}
\end{proof}

\end{document}